\documentclass[11pt,a4paper]{article}

\usepackage[margin=27mm]{geometry}
\usepackage{iftex}
\usepackage[T1]{fontenc}
\ifPDFTeX
  \usepackage[utf8]{inputenc}
\fi
\usepackage{lmodern}
\usepackage{microtype}
\usepackage{amsmath,amssymb,amsthm,mathtools}
\usepackage{mathrsfs}
\usepackage{enumitem}
\usepackage{array,booktabs,tabularx}
\usepackage{float}
\usepackage{xcolor}
\usepackage{tikz}
\usetikzlibrary{arrows.meta,calc,positioning}
\usepackage{aliascnt}
\usepackage{hyperref}
\hypersetup{
  colorlinks=true,
  linkcolor=blue!45!black,
  citecolor=blue!45!black,
  urlcolor=blue!55!black,
  pdftitle={
  Entire Logarithmic Signatures of Bounded-Variation Paths in Finite Dimensions
  },
  pdfauthor={Elena Boguslavskaya},
  pdfsubject={A conjugacy classification of bounded-variation signatures with entire logarithm and a matrix spectral-growth theorem},
  pdfkeywords={path signatures, logarithmic signatures, bounded variation, Lyons-Sidorova conjecture, free Lie algebras, spectral invariants, metric trees}
}

\allowdisplaybreaks
\setlist[enumerate]{itemsep=0.3em,topsep=0.5em}
\setlist[itemize]{itemsep=0.3em,topsep=0.5em}

\newtheorem{theorem}{Theorem}[section]

\newaliascnt{proposition}{theorem}
\newtheorem{proposition}[proposition]{Proposition}
\aliascntresetthe{proposition}

\newaliascnt{lemma}{theorem}
\newtheorem{lemma}[lemma]{Lemma}
\aliascntresetthe{lemma}

\newaliascnt{corollary}{theorem}
\newtheorem{corollary}[corollary]{Corollary}
\aliascntresetthe{corollary}

\newaliascnt{conjecture}{theorem}

\aliascntresetthe{conjecture}

\newaliascnt{definition}{theorem}
\newtheorem{definition}[definition]{Definition}
\aliascntresetthe{definition}

\theoremstyle{remark}
\newaliascnt{remark}{theorem}
\newtheorem{remark}[remark]{Remark}
\aliascntresetthe{remark}

\usepackage[nameinlink,capitalise,noabbrev]{cleveref}

\crefname{theorem}{Theorem}{Theorems}
\Crefname{theorem}{Theorem}{Theorems}
\crefname{proposition}{Proposition}{Propositions}
\Crefname{proposition}{Proposition}{Propositions}
\crefname{lemma}{Lemma}{Lemmas}
\Crefname{lemma}{Lemma}{Lemmas}
\crefname{corollary}{Corollary}{Corollaries}
\Crefname{corollary}{Corollary}{Corollaries}
\crefname{conjecture}{Conjecture}{Conjectures}
\Crefname{conjecture}{Conjecture}{Conjectures}
\crefname{definition}{Definition}{Definitions}
\Crefname{definition}{Definition}{Definitions}
\crefname{remark}{Remark}{Remarks}
\Crefname{remark}{Remark}{Remarks}

\newcommand{\R}{\mathbb{R}}
\newcommand{\C}{\mathbb{C}}
\newcommand{\Q}{\mathbb{Q}}
\newcommand{\Z}{\mathbb{Z}}
\newcommand{\Tens}{\mathcal{T}}
\newcommand{\Var}{\operatorname{Var}}
\newcommand{\spec}{\operatorname{spec}}
\newcommand{\tr}{\operatorname{tr}}
\newcommand{\diag}{\operatorname{diag}}
\newcommand{\id}{\operatorname{id}}
\newcommand{\Fix}{\operatorname{Fix}}
\newcommand{\Ad}{\operatorname{Ad}}
\newcommand{\e}{\mathrm{e}}
\newcommand{\dd}{\,\mathrm{d}}
\newcommand{\uLie}{\mathfrak{u}}
\newcommand{\Sl}{\mathscr{S}}
\newcommand{\norm}[1]{\left\lVert #1\right\rVert}
\newcommand{\abs}[1]{\left\lvert #1\right\rvert}
\newcommand{\set}[1]{\left\{#1\right\}}

\title{\textbf{Entire Logarithmic Signatures\\
of Bounded-Variation Paths in Finite Dimensions}}

\author{%
Elena Boguslavskaya\\[0.35em]
\small Mathematics Department\\
\small Brunel University of London\\
\small London, UK\\
\small Corresponding author: \texttt{elena@boguslavsky.net}\\
\small ORCID: \texttt{0000-0001-7347-7115}%
}
\date{}

\begin{document}
\maketitle

\begin{abstract}
We classify signatures of bounded-variation paths in finite-dimensional real normed spaces whose logarithms are entire, in the sense of superexponential homogeneous decay. The zero-first-level fibre is trivial; if the first level is $v\neq0$, the signature is $a\e^v a^{-1}$ with $a$ a bounded-variation signature, and every such conjugate is entire. For a given path, $a$ may be chosen from a prefix. For a tree-reduced representative, a gate-selected prefix gives weak path conjugacy to a line. The proof uses a finite-dimensional spectral-growth statement: if $q\geq1$, $A:\C\to M_q(\C)$ is entire, and $\norm{\e^{A(z)}}\leq C\e^{\tau\abs{z}}$, then the $k$th characteristic-polynomial coefficient of $A(z)$ has degree at most $k$ for $1\leq k\leq q$. Universal matrix isospectrality, resonant developments, metric-tree fixed points and compactness, and Stieltjes--Fourier reconstruction establish the bounded-variation modified Lyons--Sidorova conjecture under a hypothesis imposed only on the whole path.
\end{abstract}

\medskip
\noindent\textbf{2020 Mathematics Subject Classification.}
Primary 60L10; secondary 60L20, 17B01, 47A10, 54F50.

\smallskip
\noindent\textbf{Keywords.}
Path signatures; logarithmic signatures; bounded variation;
Lyons--Sidorova conjecture; free Lie algebras; spectral invariants; metric trees.

\tableofcontents

\section{Introduction}

\subsection{Entire logarithms as a structural problem}

Let $\gamma:[0,T]\to V$ be a continuous bounded-variation path in a finite-dimensional real normed space, and write $V_{\C}=V\otimes_{\R}\C$ and $\Tens((V))=\prod_{n\geq0}V^{\otimes n}$. Its degree-$n$ signature level and full signature are
\[
S_n(\gamma)
 =\int_{0<t_1<\cdots<t_n<T}
 \dd\gamma_{t_1}\otimes\cdots\otimes\dd\gamma_{t_n},
\qquad
S(\gamma)=\sum_{n\geq0}S_n(\gamma),
\]
with $S_0(\gamma)=1$. A complex-linear map $\Phi:V_{\C}\to M_q(\C)$ extends multiplicatively to each tensor level, denoted $\Phi^{(n)}$; applying these extensions to $S(\gamma)$ gives the endpoint series of the corresponding controlled matrix equation. Such finite-dimensional developments are standard tools in rough path theory and in signature asymptotics \cite{Lyons1998,BGS2020}. Chen's identity makes $S(\gamma)$ multiplicative under concatenation. Its formal logarithm
\[
l:=\log S(\gamma)=\sum_{n\geq1}l_n
\]
is primitive, so each $l_n$ is a homogeneous free Lie polynomial \cite{Chen1957,Chen1958,Reutenauer1993}. With $\norm{\cdot}_{\pi,n}$ denoting the projective tensor norm on $V_{\C}^{\otimes n}$, set
\begin{equation}\label{eq:intro-radius}
R(l)=\left(\limsup_{n\to\infty}\norm{l_n}_{\pi,n}^{1/n}\right)^{-1}.
\end{equation}
We call $l$ \emph{entire} when $R(l)=\infty$, equivalently when its homogeneous components have superexponential decay. This condition is independent of the admissible finite-dimensional crossnorm used; see \cref{lem:norm-independence}. Signature levels have factorial bounds, whereas logarithmic levels need not, so the condition is not automatic for bounded-variation paths.
For $z\in\C$, homogeneous dilation is defined by $\delta_z(\sum a_n)=\sum z^n a_n$. If $R(l)=\infty$ and $\Phi:V_{\C}\to M_q(\C)$ is a matrix development, then
\[
A_{\Phi}(z)=\Phi(\delta_zl)=\sum_{n\geq1}z^n\Phi^{(n)}(l_n)
\]
is an entire matrix function. The proof first extracts spectral information from this family of evaluations and later uses specially chosen sparse developments to recover tensor identities.

A line segment with increment $v$ has signature $\e^v=\sum_{n\geq0}v^{\otimes n}/n!$ and logarithmic signature $v$. Lyons and Sidorova asked whether infinite logarithmic-signature radius forces a reduced bounded-variation path to be straight. They proved several rigidity results and, for every nonconstant path, the lower bound
\[
R(\log S(\gamma))\geq \frac{\kappa}{\Var(\gamma)},
\qquad \log\kappa=-\kappa,
\]
with $\kappa\in(0,1)$ \cite{LyonsSidorova2006}. Chevyrev and Lyons treated positivity for geometric rough paths \cite{ChevyrevLyons2016}, while Friz, Lyons, and Seigal proved that a rectifiable reduced path with polynomial logarithmic signature is a line \cite{FrizLyonsSeigal2024}.

The signature, rather than a chosen parametrisation, is the natural object here. Trivial signature is equivalent to tree-likeness, and bounded-variation paths with the same signature are tree-like equivalent; each class has a unique reduced representative up to translation and increasing reparametrisation \cite{HamblyLyons2010}. The reduced-path picture extends to geometric rough paths \cite{BGLY2016}, and reconstruction results show how the reduced trajectory is encoded by the signature \cite{Geng2017}. Write
\[
\Sl(V)=\set{S(\eta):\eta\text{ is a continuous bounded-variation }V\text{-valued path}}
\]
for the rectifiable signature group. The problem is to classify the elements of $\Sl(V)$ with entire logarithm and then identify their tree-reduced representatives.

\subsection{Why conjugacy is the natural normal form}\label{sec:conjugacy-normal-form}

A hypothesis imposed only on the signature of the whole path cannot force literal straightness. For based paths, let $*$ denote translated concatenation, let $\overleftarrow{Y}$ be the based reverse of $Y$, and write $\lambda_u(t)=tu$ for the line segment with increment $u$. Geometrically, conjugation attaches a path and its reverse on the two sides of a middle path. If $a=S(Y)$ and $l=\log S(X)$, then
\[
\log S(Y*X*\overleftarrow{Y})=ala^{-1}.
\]
By \cref{prop:conjugacy-invariance}, infinite homogeneous radius is invariant under this operation. Conjugacy is therefore part of the symmetry of a whole-path radius condition.

For linearly independent $v,w\in V$, the three-segment path
\[
\beta=\lambda_w*\lambda_v*\lambda_{-w}
\]
is nonlinear and tree-reduced, but
\[
\log S(\beta)=\e^w v\e^{-w},
\qquad
\norm{\bigl(\log S(\beta)\bigr)_n}_{\pi,n}
 \leq \norm{v}\,\frac{(2\norm{w})^{n-1}}{(n-1)!}.
\]
Thus the global normal form must allow conjugacy to a line signature:
\[
S(\gamma)=a\e^v a^{-1},
\qquad v=\gamma_T-\gamma_0,
\qquad a\in\Sl(V).
\]

\subsection{Main results}\label{sec:main-results}

\begin{theorem}[Classification of entire logarithmic signatures]\label{thm:main}
Let $V$ be a finite-dimensional real normed space, let $\gamma:[0,T]\to V$ be continuous and of bounded variation, and put
\[
\overline\gamma_t=\gamma_t-\gamma_0,
\qquad
 g=S(\gamma),\qquad l=\log g,\qquad v=\gamma_T-\gamma_0.
\]
Assume $R(l)=\infty$.
\begin{enumerate}[label=\textup{(\roman*)}]
\item If $v=0$, then $g=1$; equivalently, $\gamma$ is tree-like.
\item If $v\neq0$, there is $t_*\in[0,T]$ such that, with
\[
\alpha=\overline\gamma|_{[0,t_*]},
\qquad a=S(\alpha),
\]
one has
\begin{equation}\label{eq:main-factorisation}
S(\gamma)=a\e^v a^{-1}.
\end{equation}
\end{enumerate}
Conversely, if $a$ is a bounded-variation signature, then $a\e^v a^{-1}$ has an entire logarithmic signature for every $v\in V$.
\end{theorem}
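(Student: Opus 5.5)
The plan is to treat the converse first, since it is elementary, and then prove (i) and (ii) by combining the spectral-growth theorem from the abstract with a reconstruction step.

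\emph{Converse.} Take $a=S(Y)$. Then $\log(a\e^va^{-1})=ava^{-1}=\e^{\operatorname{ad}_{\log a}}v$. To bound it, write $Y*\lambda_v*\overleftarrow{Y}$ and expand $ava^{-1}=\sum_{j,k}S_j(Y)\,v\,S_k(\overleftarrow{Y})$. Degree $n$ collects the terms with $j+k=n-1$. The factorial bounds $\norm{S_j(Y)}_{\pi,j}\le \Var(Y)^j/j!$ then give
\[
\norm{(ava^{-1})_n}_{\pi,n}\le \norm{v}\,\frac{(2\Var(Y))^{n-1}}{(n-1)!},
\]
exactly as in the three-segment example of \cref{sec:conjugacy-normal-form}. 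Hence $R=\infty$. For a general $a\in\Sl(V)$ I would cite \cref{prop:conjugacy-invariance} instead.

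\emph{Spectral step.} Assume $R(l)=\infty$. For every development $\Phi:V_\C\to M_q(\C)$, the function $A_\Phi(z)=\Phi(\delta_z l)$ is entire and satisfies $\e^{A_\Phi(z)}=\Phi(\delta_z g)$. The right-hand side is the endpoint of a linear controlled ODE driven by $z\gamma$, so $\norm{\e^{A_\Phi(z)}}\le \e^{\norm{\Phi}\Var(\gamma)\abs{z}}$. The spectral-growth theorem then says the $k$th characteristic coefficient of $A_\Phi(z)$ is a polynomial of degree at most $k$. Its degree-$k$ part is the corresponding coefficient of $\Phi(v)$. This follows by comparing homogeneity: the coefficient is a homogeneous polynomial in $z$ built from $\Phi^{(n)}(l_n)$, and only the level-one terms can reach $z^k$ in degree $k$. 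Consequently $A_\Phi(z)$ is isospectral to $z\Phi(v)$ for every $z$ and every $\Phi$.

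From this I intend to deduce \emph{universal matrix isospectrality}: $\e^{\Phi(l)}$ is isospectral to $\e^{\Phi(v)}$ for all developments. In case (i) this makes $\Phi(\delta_z l)$ nilpotent for all $\Phi$ and $z$. I would then use sparse resonant developments, for example upper-triangular ones into Lie algebras of unipotent type, together with the recursive level structure, to show that each $l_n=0$. The argument is an induction on $n$: assuming $l_m=0$ for $m<n$, the lowest nonzero level $l_n$ is detected by a development whose spectrum at order $z^n$ sees $l_n$ linearly. Having $g=1$, the tree-like characterisation of \cite{HamblyLyons2010} gives (i).

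\emph{Case $v\ne0$ (main obstacle).} Isospectrality suggests conjugacy, but upgrading it to conjugacy inside $\Sl(V)$ realised by a prefix of $\gamma$ is the hard part. My plan is to pass to the tree-reduced representative and consider the candidate prefixes $a_t=S(\overline\gamma|_{[0,t]})$. I would look for $t_*$ such that $a_{t_*}^{-1}ga_{t_*}$ fixes the line direction, that is, it equals $\e^v$. To do this I would study, for developments with $\Phi(v)$ diagonalizable with distinct resonant eigenvalues, the fixed points of the induced action of the signature on an associated metric tree. This is the "metric-tree fixed points and compactness" step: compactness of $[0,T]$ together with continuity of $t\mapsto a_t$ yields a common fixed point, and so a single $t_*$ that works for all $\Phi$ simultaneously. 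Finally, a Stieltjes--Fourier reconstruction argument should turn the development identities $\Phi(a^{-1}ga)=\e^{\Phi(v)}$, valid for all $\Phi$, into the tensor identity \eqref{eq:main-factorisation}, since developments separate points of $\Tens((V))$. I expect producing the uniform $t_*$ across all developments to be the crux. The first thing I would try there is a nested-intervals argument on the closed sets $\{t:\text{the identity holds for }\Phi\}$, combined with a finite-intersection property.
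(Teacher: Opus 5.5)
Your converse and your isospectrality step are the paper's (\cref{prop:converse}, \cref{thm:exact-isospectrality}). Every $a\in\Sl(V)$ is by definition $S(Y)$ for a bounded-variation $Y$, so the extra citation is unnecessary.

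Your argument for (i) fails as proposed. For any development into upper-triangular matrices, $\Phi^{(n)}(l_n)$ with $n\geq2$ is a combination of iterated commutators. It is therefore strictly upper triangular, so $\Phi(\delta_z l)$ is nilpotent automatically and isospectrality with $0$ carries no information. More generally, no spectral invariant sees $l_n$ linearly. The only conjugation-invariant linear functional is the trace, and $\tr\Phi^{(n)}(l_n)=0$ for every Lie element of degree $n\geq2$. What isospectrality actually gives is that $H=\Phi^{(n)}(l_n)$ is nilpotent for every $\Phi$. The missing idea is a separating class of developments in which a nonzero $H$ cannot be nilpotent. The paper takes $\rho:V\to\uLie(q)$ and obtains $H\neq0$ from \cref{lem:skew-separation}. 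It then compares the $z^{2n}$ coefficient of $\tr(\rho(\delta_z l)^2)\equiv0$ with $\tr(H^2)=-\tr(H^*H)<0$.

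For (ii), your plan lacks two things: a mechanism that turns spectral information into exact identities, and the object on which the tree argument runs. In the paper the mechanism is resonance. Take $\Gamma=\gamma^{*m}$ and the skew-Hermitian nearest-neighbour chain $\rho_u$ with $\rho_u(v)=D=i\diag(\sigma_0,\dots,\sigma_r)$ and $m\sigma_j\in2\pi\Z$. Isospectrality together with unitary diagonalisability gives the exact equality $U^u_{mT}=\exp A(u)=I$. The gauge $U^u_s\e^{-x_sD}$, extraction of the $u_1\cdots u_r$ coefficient of the $(0,r)$ entry, and entire dependence on $u$ then kill every decorated iterated integral. This is the cyclic identity \cref{eq:cyclic-identity} for the full signature $h_\Lambda$ of the decorated transverse path $B^\Lambda(\gamma)$.

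Skew-Hermitian matrices are used so that no distinctness of the $\sigma_j$ is needed. Your restriction to $\Phi(v)$ with distinct eigenvalues would miss every word in which a consecutive block of frequencies sums to zero, whereas the tree step needs all words. Only the cyclic identity makes the isometry $p\mapsto h_\Lambda T_\Lambda(p)$ of the Le Donne--Z\"ust tree $\Sl(E_\Lambda)$ of finite order. That is what gives a fixed point on $[1,h_\Lambda]$, which is a prefix signature because the prefix trace is connected. A single matrix development provides neither a tree nor such an isometry.

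Your final step is also misplaced. Identities $\Phi(a^{-1}ga)=\e^{\Phi(v)}$ for all $\Phi$ would finish by separation alone, with no Fourier analysis. Resonance produces identities only at rational frequencies. The Stieltjes--Fourier step (\cref{lem:fourier-recovery,thm:decorated-determines}) extends them by continuity of Fourier transforms and recovers ordinary coefficients as polynomial moments of the pushforward measures. The nestedness needed for your compactness argument comes from the coordinate projections $E_{\Lambda'}\to E_\Lambda$. Passing to the tree-reduced representative is unnecessary, and you would in any case need connectedness of the prefix trace to return to a prefix of $\overline\gamma$.
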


Equivalently, for
\[
\mathcal E(V)=\set{g\in\Sl(V):R(\log g)=\infty},
\]
\begin{equation}\label{eq:entire-locus}
\mathcal E(V)=\set{a\e^v a^{-1}:a\in\Sl(V),\ v\in V},
\qquad
\set{g\in\mathcal E(V):g_1=0}=\set{1}.
\end{equation}
The first level determines $v$, while the higher levels in the entire locus are generated by conjugation within the rectifiable signature group. The factor $a$ in the nonzero-increment case may be chosen as the signature of an actual prefix of the given centred path.
Thus the analytic condition on all homogeneous free-Lie components is equivalent, within the rectifiable signature group, to a conjugacy normal form. Equation~\eqref{eq:entire-locus} also separates the zero-increment case: the identity is the only entire signature with vanishing first level.

Let $\sim_{\mathrm{rep}}$ denote weak increasing reparametrisation as defined in \cref{sec:path-conventions}, and call a path tree-reduced when, after waiting intervals are removed, it has no nonconstant subpath of trivial signature. After an auxiliary Euclidean norm is chosen, the Le Donne--Z\"ust construction identifies $\Sl(V)$ with a complete metric tree \cite{LeDonneZust2021}. In this tree, the gate of a point to a closed convex set is its metric projection, and a minimum-displacement axis is the locus on which an isometry attains its minimum displacement.

\begin{corollary}[Modified Lyons--Sidorova conjecture]\label{cor:modified-conjecture}
Let $V$ be a finite-dimensional real normed space, let $\gamma:[0,T]\to V$ be a continuous tree-reduced bounded-variation path, and let $v=\gamma_T-\gamma_0$. Then
\[
R\bigl(\log S(\gamma)\bigr)=\infty
\]
if and only if there is a based prefix $\beta$ of $\overline\gamma=\gamma-\gamma_0$ such that
\begin{equation}\label{eq:path-level-main}
\overline\gamma
 \sim_{\mathrm{rep}}
\beta*\lambda_v*\overleftarrow{\beta}.
\end{equation}
When $v=0$, the tree-reduced path is constant. When $v\neq0$, $S(\beta)$ is the gate from the identity to the unique minimum-displacement axis of left multiplication by $S(\gamma)$ in the signature tree. This prefix is unique up to waiting intervals and weak increasing reparametrisation.
\end{corollary}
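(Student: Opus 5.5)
We derive \cref{cor:modified-conjecture} from \cref{thm:main}, using the uniqueness of reduced representatives \cite{HamblyLyons2010} and the metric-tree structure of $\Sl(V)$ \cite{LeDonneZust2021}. The converse direction is immediate. If $\overline\gamma\sim_{\mathrm{rep}}\beta*\lambda_v*\overleftarrow{\beta}$, then reparametrisation invariance and Chen's identity give $S(\gamma)=S(\beta)\e^vS(\beta)^{-1}$. The converse part of \cref{thm:main} then gives $R(\log S(\gamma))=\infty$. If $v=0$, the path $\beta*\lambda_0*\overleftarrow\beta$ is tree-like, so $\gamma$ is constant.

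For the forward direction with $v=0$, \cref{thm:main}(i) gives $S(\gamma)=1$. Since $\gamma$ is tree-reduced, it is then constant, and we take $\beta$ constant. For $v\neq0$, \cref{thm:main}(ii) gives a time $t_*$ with $S(\gamma)=a\e^va^{-1}$, where $a=S(\alpha)$ and $\alpha=\overline\gamma|_{[0,t_*]}$. The concatenation $\alpha*\lambda_v*\overleftarrow\alpha$ has the same signature as $\overline\gamma$, so the two are tree-like equivalent. Their reductions therefore agree up to reparametrisation. Because $\overline\gamma$ is already reduced, it suffices to identify the tree reduction of $\alpha*\lambda_v*\overleftarrow\alpha$ with a path of the form $\beta*\lambda_v*\overleftarrow\beta$, where $\beta$ is a prefix of $\overline\gamma$.

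The main step is to carry out this reduction in the tree. In the Le Donne--Z\"ust tree, reduced paths are geodesics from the identity. Left multiplication by $h=a\e^va^{-1}$ is an isometry of the tree, and it is the conjugate by $a$ of the translation $x\mapsto\e^v x$. That translation is hyperbolic with axis $\{\e^{sv}\}_{s\in\R}$, since a line segment is reduced and its extensions stay geodesic. Hence $L_h$ is hyperbolic with axis $\mathcal A=a\{\e^{sv}\}$ and translation length $\norm{v}$ (Euclidean), and this axis is unique. Let $b$ be the gate (metric projection) of the identity onto $\mathcal A$. For a hyperbolic isometry, the geodesic from $1$ to $h\cdot1$ runs from $1$ to $b$, then along $\mathcal A$ from $b$ to $hb$, then from $hb$ to $h\cdot1$. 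The first piece is a geodesic $[1,b]$, and the last is its image $h[1,b]$.

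We then translate these geodesic pieces back into paths. Write $b=a\e^{s_0v}$. The middle piece corresponds to the increment $v$, so its reduced path is $\lambda_v$. The last piece is the reverse of $\beta$, where $\beta$ is the reduced path of $b$; this holds because $b^{-1}hb=\e^v$. Since $\overline\gamma$ is the geodesic from $1$ to $h$, $\beta$ is its initial segment and hence a prefix of $\overline\gamma$. This yields \eqref{eq:path-level-main}. Uniqueness follows because the gate onto the unique minimal axis is unique, and $b$ determines its reduced path up to waiting intervals and reparametrisation.

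The step we expect to be the main obstacle is justifying that the tree geometry matches the path operations. One must show that geodesics in the Le Donne--Z\"ust tree from $1$ correspond exactly to reduced paths, and that the three-piece decomposition of $[1,h\cdot1]$ is literally the concatenation $\beta*\lambda_v*\overleftarrow\beta$ as based paths. The natural way to do this is to use the identification of prefixes of a reduced path with the geodesic segment in the tree. One must also check that $L_h$ is genuinely hyperbolic, with positive translation length, whenever $v\neq0$.
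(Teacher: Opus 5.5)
Your converse direction, the $v=0$ case, and the existence part for $v\neq0$ follow the paper's route. \Cref{thm:main} gives $S(\gamma)=a\e^va^{-1}$. The argument of \cref{thm:reduction-lemma} then takes the prefix $\beta$ of $\overline\gamma$ whose signature is the gate $p$; this prefix exists because $[1,g]$ is the prefix trace (\cref{prop:prefix-geodesic}). After waiting intervals are suppressed, the prefix lift of $\beta*\lambda_v*\overleftarrow{\beta}$ runs injectively through $[1,p]$, $[p,gp]$ and $g[p,1]=[gp,g]$, and these meet only at consecutive endpoints. So this path is tree-reduced with signature $p\e^vp^{-1}=g$, and Hambly--Lyons uniqueness finishes. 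The obstacle you flag is exactly what \cref{prop:reducedness,prop:prefix-geodesic} supply.

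The genuine gap is in the last two assertions: that $S(\beta)$ is the gate for every prefix $\beta$ satisfying \eqref{eq:path-level-main}, and that this prefix is unique. Your justification only shows that two prefixes with signature exactly $p$ differ by a waiting interval. It does not exclude a prefix $\beta'$ with $b=S(\beta')\neq p$, and such competitors exist at the signature level. Every $q\in[p,gp]$ lies on the axis, satisfies $g=q\e^vq^{-1}$, and is attained along the prefix trace of $\overline\gamma$ (\cref{rem:two-prefixes}). So uniqueness of the gate alone cannot single out $p$.

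The missing step is the backtracking argument of \cref{cor:canonical-prefix}:
\begin{itemize}
\item If $\overline\gamma\sim_{\mathrm{rep}}\beta'*\lambda_v*\overleftarrow{\beta'}$, then $g=b\e^vb^{-1}$. Hence $b\mathcal{A}_v$ is a minimum-displacement axis and equals $\mathcal{A}$.
\item Since $b\in[1,g]$, \eqref{eq:axis-intersection} gives $b=p\e^{sv}$ with $0\leq s\leq1$.
\item If $s>0$, the middle segment traverses the nondegenerate arc $[gp,gb]$ forward. The beginning of the reversed copy traverses $g[b,p]=[gb,gp]$ backward. So an interior point of that arc is the prefix signature at two distinct times separated by a nonconstant stretch, and $\beta'*\lambda_v*\overleftarrow{\beta'}$ is not tree-reduced.
\item Tree-reducedness is invariant under $\sim_{\mathrm{rep}}$ (\cref{prop:weak-reparametrisation}) and $\overline\gamma$ is tree-reduced, so this is a contradiction.
\end{itemize}
Hence $s=0$ and $b=p$. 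Only then does your observation that $b$ determines the prefix up to waiting complete the uniqueness claim.
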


The analytic input used to obtain matrix isospectrality is the following finite-dimensional statement. For $A\in M_q(\C)$, let $\sigma_k(A)$ be the $k$th elementary symmetric function of its eigenvalues, counted with algebraic multiplicity, and write $I=I_q$ when the matrix size is clear.

\begin{theorem}[Finite-dimensional spectral-growth theorem]\label{thm:spectral-growth}
Let $q\geq1$ and let $A:\C\to M_q(\C)$ be entire, with $A(0)=0$, and suppose
\begin{equation}\label{eq:exp-growth}
\norm{\e^{A(z)}}\leq C\e^{\tau\abs{z}}
\qquad (z\in\C)
\end{equation}
for constants $C\geq1$ and $\tau\geq0$, where $\norm{\cdot}$ is an operator norm. Then $z\mapsto\sigma_k(A(z))$ is a polynomial of degree at most $k$ for $1\leq k\leq q$.

If, in addition, $A(z)=zH+O(z^2)$ as $z\to0$ for some $H\in M_q(\C)$, then
\begin{equation}\label{eq:charpoly-linear}
\det(\zeta I-A(z))=\det(\zeta I-zH)
\end{equation}
for all $z,\zeta\in\C$. Consequently,
\begin{equation}\label{eq:trace-powers}
\tr(A(z)^p)=z^p\tr(H^p)
\qquad (p\geq1).
\end{equation}
\end{theorem}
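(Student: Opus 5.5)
The plan is to control the eigenvalues $\lambda_1(z),\dots,\lambda_q(z)$ of $A(z)$ and then read off polynomial growth of the entire functions $\sigma_k(A(z))$. These functions are entire because they are polynomial in the entries of $A(z)$.

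\emph{Step 1: upper bound on real parts.} Since the eigenvalues of $\e^{A(z)}$ are $\e^{\lambda_j(z)}$, the spectral radius bound gives $\abs{\e^{\lambda_j(z)}}\leq\norm{\e^{A(z)}}\leq C\e^{\tau\abs{z}}$. Hence
\[
\operatorname{Re}\lambda_j(z)\leq\log C+\tau\abs{z}\qquad\text{for all } j \text{ and } z.
\]

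\emph{Step 2: the trace.} Apply this to $\sigma_1(A(z))=\tr A(z)=\sum_j\lambda_j(z)$. It is entire, vanishes at $0$, and satisfies $\operatorname{Re}\tr A(z)\leq q(\log C+\tau\abs{z})$. The Borel--Carath\'eodory inequality then gives $\abs{\tr A(z)}=O(1+\abs{z})$, so Liouville shows $\tr A$ is a polynomial of degree at most $1$.

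\emph{Step 3: two-sided bound on real parts.} Write $\operatorname{Re}\lambda_j=\operatorname{Re}\tr A-\sum_{i\neq j}\operatorname{Re}\lambda_i$. Combining Step 1 with Step 2, this yields
\[
\abs{\operatorname{Re}\lambda_j(z)}\leq c(1+\abs{z})\qquad\text{for every } j.
\]

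\emph{Step 4 (main obstacle): from real parts to moduli.} The goal is $\abs{\lambda_j(z)}\leq c'(1+\abs{z})$. The difficulty is that the $\lambda_j$ are only branches of the algebroid function defined by $\det(\zeta I-A(z))=0$, not entire functions, so the Borel--Carath\'eodory inequality does not apply to them directly.

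What I would try first is an algebroid version of Borel--Carath\'eodory. Away from the discrete branch locus the branches are locally holomorphic, and $\operatorname{Re}\lambda_j$ is locally harmonic. On the disc $\abs{z}<2R$ the branches pull back to single-valued holomorphic functions on a finite branched cover, which is a compact bordered Riemann surface. There a Schwarz--Poisson representation bounds $\abs{\lambda_j}$ on the preimage of $\abs{z}\leq R$ by a constant times $\sup\abs{\operatorname{Re}\lambda_j}$ on the boundary, plus $\abs{\lambda_j(0)}=0$. Equivalently, one can invoke Valiron's extension of Borel--Carath\'eodory to algebroid functions.

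As a fallback I would use the subharmonicity of $z\mapsto\sum_j\varphi(\lambda_j(z))$ for subharmonic $\varphi$, with $\varphi(\lambda)=\abs{\operatorname{Im}\lambda}$ controlled through harmonic conjugates. The needed output is only linear growth of $\max_j\abs{\lambda_j}$; the constants are irrelevant.

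\emph{Step 5: degree bound.} Once $\abs{\lambda_j(z)}\leq c'(1+\abs{z})$, we get $\abs{\sigma_k(A(z))}\leq\binom qk c'^k(1+\abs{z})^k$. Cauchy estimates then show that $\sigma_k(A(z))$ is a polynomial of degree at most $k$.

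\emph{Step 6: the case $A(z)=zH+O(z^2)$.} Each entry of $A(z)$ is $O(z)$, and $\sigma_k$ is a homogeneous polynomial of degree $k$ in the entries. Hence
\[
\sigma_k(A(z))=z^k\sigma_k(H)+O(z^{k+1}).
\]
A polynomial of degree at most $k$ with this expansion equals $z^k\sigma_k(H)=\sigma_k(zH)$ identically. Therefore all coefficients of $\det(\zeta I-A(z))$ and $\det(\zeta I-zH)$ agree, which proves \eqref{eq:charpoly-linear}. The power sums $\tr(A(z)^p)$ are universal polynomials in $\sigma_1,\dots,\sigma_q$ by Newton's identities, so they coincide with $\tr((zH)^p)=z^p\tr(H^p)$, which is \eqref{eq:trace-powers}.
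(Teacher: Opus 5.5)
\emph{Genuine gap in Step 4.} Your Steps 1--3, 5 and 6 are sound, and Step 6 is essentially the paper's argument. Step 4, which you correctly identify as the crux, is not carried out, and the mechanism you sketch does not give what is needed. The bound must hold with a constant independent of $R$: you need $\max_j\abs{\lambda_j(z)}\leq c'(1+\abs{z})$ for all $z$.

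A Schwarz--Poisson or harmonic-conjugate estimate on the bordered Riemann surface $X_R$ lying over $\abs{z}<2R$ has a constant governed by the conformal geometry of $X_R$, namely its harmonic measure and its conjugate-function operator. After rescaling to a fixed disc, the branch points of $X_R$ move with $R$. Nothing in your sketch prevents that constant from growing with $R$. So your remark that ``the constants are irrelevant'' is exactly where the argument breaks: the values are irrelevant, but uniformity in $R$ is the whole point.

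The appeal to ``Valiron's extension of Borel--Carath\'eodory'' is not a precise statement you can check hypotheses against. The fallback is circular: subharmonicity of $z\mapsto\sum_j\abs{\Im\lambda_j(z)}$ only bounds it by its own boundary values, and those are what you are trying to control.

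\emph{The missing idea.} You need a Schwarz lemma for the spectrum whose constant comes from the maximum principle on the base disc, not on the cover. The paper does this directly on matrices, with no branches:
\begin{itemize}
\item Put $F_R(w)=A(Rw)/M_R$ with $M_R=\tau R+\log C+1$. By your Step 1 alone, $\spec F_R(w)$ lies in $\set{\Re\mu<1}$.
\item Apply $\phi(\mu)=\mu/(2-\mu)$ by functional calculus. This gives the holomorphic map $B_R(w)=F_R(w)(2I-F_R(w))^{-1}$ with $B_R(0)=0$ and $r(B_R(w))<1$.
\item Vesentini's subharmonicity of $\log r\bigl(B_R(w)/w\bigr)$ gives $r(B_R(w))\leq\abs{w}$.
\item Hence $\abs{\lambda}\leq 2M_R$ on $\abs{z}\leq R/2$, uniformly in $R$.
\end{itemize}

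Your branch approach can be repaired the same way. Compose the branches with a conformal map of the half-plane (or strip) onto the disc fixing $0$. Then use subharmonicity of the fibrewise maximum of $\log\abs{\cdot}$ over the cover, which is Vesentini's theorem in disguise, together with the vanishing of all eigenvalues at $z=0$. This also shows that Steps 2--3 are unnecessary: the one-sided half-plane bound from Step 1 already suffices once the half-plane is mapped to the disc.
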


The theorem is stated in a self-contained matrix form, but its role in this paper is as a purpose-built analytic lemma for the signature classification. It controls the characteristic invariants of $A(z)$ without choosing global holomorphic eigenvalue labels or scalar logarithms of the eigenvalues of $\e^{A(z)}$. The spectral Schwarz estimate is standard \cite[Theorem~1]{Vesentini2008}; the proof adds a half-plane normalisation that turns the bound on $\e^{A(z)}$ into polynomial degree bounds for the characteristic coefficients. Applied to
\[
A_{\Phi}(z)=\sum_{n\geq1}z^n\Phi^{(n)}(l_n),
\qquad \Phi:V_{\C}\to M_q(\C),
\]
it yields
\[
\det\!\bigl(\zeta I_q-A_{\Phi}(z)\bigr)
 =\det\!\bigl(\zeta I_q-z\Phi(v)\bigr).
\]
This identity is used for loop rigidity and for the resonant identities in the nonzero-increment case. No broader spectral-theoretic classification is asserted.

\begin{remark}[The one-dimensional case]\label{rem:one-dimensional}
If $\dim V=1$, atomless product-measure symmetry gives
\[
S_n(\gamma)=\frac{(\gamma_T-\gamma_0)^{\otimes n}}{n!},
\]
so $S(\gamma)=\e^{\gamma_T-\gamma_0}$. The remaining argument concerns $\dim V\geq2$.
\end{remark}

\subsection{Comparison with Boedihardjo--Geng--Wang}\label{sec:prior-work}

All formal theorem and priority comparisons in this subsection are made with Boedihardjo--Geng--Wang \cite{BGW2025}, specifically arXiv:2506.18207v1 of 22 June 2025. A conjugacy-corrected formulation also appeared in Boedihardjo's 2020 DataSig lecture slides \cite[slide~38]{Boedihardjo2020Slides}; the slides are cited only as historical context, and no mathematical comparison below depends on that non-journal source.

\begin{table}[H]
\centering
\caption{Comparison with the results and formulation in Boedihardjo--Geng--Wang \cite{BGW2025}.}
\label{tab:bgw-comparison}
\footnotesize
\renewcommand{\arraystretch}{1.12}
\begin{tabularx}{\textwidth}{@{}p{0.265\textwidth}XX@{}}
\toprule
Issue & Boedihardjo--Geng--Wang & Present paper \\
\midrule
Modified formulation
& Stated as the Modified LS Conjecture.
& Uses the same conjugacy-corrected formulation; it is not claimed here as new. \\
Conjugacy invariance and sufficiency
& Proposition~2.6 proves invariance and the sufficiency direction.
& Reproved in \cref{prop:conjugacy-invariance} and \cref{prop:converse} for self-containment. \\
All-subinterval theorem
& Theorem~4.1 assumes infinite radius on every subinterval and obtains straight-line rigidity.
& Assumes infinite radius only for the signature of the whole path. \\
Integral-frequency identities
& Theorem~6.5 gives logarithmic-signature identities under a nondegeneracy condition.
& \Cref{prop:resonance} gives a full decorated-signature identity for every finite integral-frequency set, without that condition. \\
Zero increment
& The cited results do not give a general classification of loops.
& \Cref{thm:loop-rigidity} proves that an entire bounded-variation loop has signature $1$. \\
Global closure
& No passage from one whole-path hypothesis to the conjugacy factorisation is asserted in the cited results.
& \Cref{sec:tree-factorisation,sec:common-prefix,sec:reconstruction} use tree fixed points, compactness, and Fourier reconstruction to obtain the factorisation. \\
Actual-prefix factorisation
& Not supplied by the cited results.
& \Cref{thm:main} chooses the conjugating signature from a prefix of the given centred path. \\
Gate prefix and path statement
& Not supplied by the cited results.
& \Cref{cor:modified-conjecture,thm:reduction-lemma} select the gate prefix for a tree-reduced representative. \\
\bottomrule
\end{tabularx}
\end{table}

The two arguments share finite-dimensional path developments, complex exponential one-forms, and root-chain constructions. In particular, the simple-root chain in \cite{BGW2025} is a conceptual predecessor of the adjacent matrix-unit chain used here. The difference at the resonance stage is the output: universal matrix isospectrality is converted into an identity for the full decorated signature for arbitrary finite rational frequency sets. On the common continuous bounded-variation domain, its integral-frequency specialisation implies the nondegenerate identities of \cite[Theorem~6.5]{BGW2025}. The subsequent tree--compactness--Fourier argument supplies the whole-path necessity implication and the two prefix conclusions recorded in \cref{tab:bgw-comparison}. The proof does not invoke the theorems of \cite{BGW2025}, although it uses the shared development and root-chain ideas described above.

\subsection{Proof architecture}\label{sec:broader-context}

The proof has four steps.
\begin{enumerate}[label=\textup{\arabic*.},leftmargin=2.2em]
\item \Cref{sec:matrix-isospectrality,sec:zero-increment} apply the spectral-growth theorem to finite-dimensional evaluations of the logarithmic signature, obtaining exact isospectrality and the loop theorem.
\item For $v\neq0$, \cref{sec:resonance} uses resonant nearest-neighbour developments to obtain cyclic identities for every finite rational frequency set.
\item \Cref{sec:tree-factorisation,sec:common-prefix} convert each cyclic identity into a prefix factorisation in the complete signature tree and use compactness to choose one prefix time for all rational frequencies.
\item \Cref{sec:reconstruction} identifies decorated coefficients as Fourier transforms of finite Stieltjes measures and reconstructs every ordinary signature coefficient. The axis--gate reduction then gives the path statement for tree-reduced representatives.
\end{enumerate}
The four steps place the radius problem at the intersection of three structures. First, $l$ is a degreewise free Lie series. Although $R(l)=\infty$ is not free holomorphicity in the sense of \cite{AglerMcCarthy2015}, it makes every finite-dimensional evaluation entire, and the matrix developments form a separating family. Second, the Le Donne--Z\"ust signature space is a complete real tree, so fixed-point, axis, and gate geometry convert algebraic factorisations into path statements \cite{LeDonneZust2021,CullerMorgan1987}. Third, decorated signature coefficients are Fourier transforms of finite signed Stieltjes measures on ordered simplices; Fourier uniqueness and polynomial moments recover the ordinary tensor coefficients. \Cref{fig:proof-dependencies} records the dependencies, and the external inputs and their hypotheses are listed in \cref{sec:external}.

\begin{figure}[ht]
\centering
\begin{tikzpicture}[
  proofbox/.style={draw=black!55, rounded corners=2pt, align=center,
    text width=3.20cm, minimum height=1.20cm, inner sep=3pt,
    fill=black!2, font=\footnotesize},
  proofarrow/.style={-{Latex[length=2mm]}, line width=0.55pt}
]
\node[proofbox] (entire) at (0,0) {Entire logarithmic\\signature};
\node[proofbox] (spectral) at (3.9,0) {Exact matrix\\isospectrality};
\node[proofbox] (resonance) at (7.8,0) {Resonant Fourier\\identities};
\node[proofbox] (prefix) at (11.7,0) {Common prefix\\for all rational\\frequencies};

\node[proofbox] (loop) at (0,-2.10) {Zero increment:\\loop rigidity\\$S(\gamma)=1$};
\node[proofbox] (axis) at (3.9,-2.10) {\mbox{Invariant-axis}\\reduction for\\\mbox{tree-reduced} paths};
\node[proofbox] (factor) at (7.8,-2.10) {Signature\\factorisation\\$S(\gamma)=a\e^v a^{-1}$};
\node[proofbox] (fourier) at (11.7,-2.10) {Stieltjes--Fourier\\reconstruction};

\draw[proofarrow] (entire) -- (spectral);
\draw[proofarrow] (spectral) -- (resonance);
\draw[proofarrow] (resonance) -- (prefix);
\draw[proofarrow] (spectral.south west) to[bend right=18]
  node[left, font=\scriptsize] {$v=0$} (loop.north east);
\draw[proofarrow] (prefix) -- (fourier);
\draw[proofarrow] (fourier) -- (factor);
\draw[proofarrow] (factor) -- (axis);
\end{tikzpicture}
\caption{Dependency structure of the proof. The lower-left branch treats zero increment; the lower row, read from right to left, treats nonzero increment.}
\label{fig:proof-dependencies}
\end{figure}

\section{Notation and preliminary material}

\subsection{Norms, tensor algebra, and signatures}

Let $V$ be a finite-dimensional real normed vector space.
Write $V_{\C}=V\otimes_{\R}\C$ for its complexification and fix the complexification norm
\[
\norm{z}_{V_{\C}}
 = \sup_{\substack{\lambda\in V^*\\ \norm{\lambda}\leq 1}}
   \abs{\lambda_{\C}(z)},
\]
where $\lambda_{\C}$ denotes the complex-linear extension of $\lambda$.
This norm restricts to the original norm on $V$.
On $V_{\C}^{\otimes n}$ we use the associated projective tensor norm $\norm{\cdot}_{\pi,n}$.
For each $q\geq1$, $M_q(\C)$ carries the operator norm induced by the Euclidean norm on $\C^q$.

Write
\[
\Tens((V))=\prod_{n=0}^{\infty} V^{\otimes n},
\qquad
\Tens((V_{\C}))=\prod_{n=0}^{\infty} V_{\C}^{\otimes n},
\]
for the completed tensor algebras over $V$ and $V_{\C}$, both with graded concatenation product. If $a=\sum_{n\geq0}a_n$ is a graded series, $a_n$ denotes its homogeneous degree-$n$ component. We write $\langle\cdot,\cdot\rangle$ for the canonical pairing between a tensor power of the dual and the corresponding tensor power.
For a series $l=\sum_{n\geq1}l_n\in\Tens((V_{\C}))$ with zero constant term, define
\[
R(l)=\left(\limsup_{n\to\infty}\norm{l_n}_{\pi,n}^{1/n}\right)^{-1},
\]
with the convention $0^{-1}=\infty$.

\begin{lemma}[Norm independence of infinite radius]\label{lem:norm-independence}
The condition $R(l)=\infty$ is independent of the norm on the finite-dimensional space $V$ and of the choice of any crossnorm family $(\alpha_n)$ satisfying
\[
\norm{u}_{\varepsilon,n}\leq\norm{u}_{\alpha,n}\leq\norm{u}_{\pi,n}
\qquad (u\in V_{\C}^{\otimes n}),
\]
where $\varepsilon_n$ and $\pi_n$ denote the injective and projective norms. This includes the injective and projective families and, after choosing an equivalent Euclidean norm on $V$, the Hilbertian tensor norms.
\end{lemma}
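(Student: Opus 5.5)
The plan is to show that every admissible choice (norm on $V$, crossnorm family) produces tensor-level norms comparable to the reference projective norm up to a factor $K^n$ at level $n$. Such a factor changes $\limsup_n\norm{l_n}^{1/n}$ by at most the multiplicative constant $K$, so the property that this limsup vanishes, i.e.\ $R(l)=\infty$, is unchanged. The argument has two independent reductions, one for the base norm and one for the crossnorm.

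\emph{Changing the base norm.} Let $\norm{\cdot}$ and $\norm{\cdot}'$ be two norms on $V$. Since $V$ is finite-dimensional, there is $K\geq1$ with $K^{-1}\norm{x}\leq\norm{x}'\leq K\norm{x}$. The first step is to transfer this to the complexification norms defined by the supremum over the dual unit ball. The dual norms are $K$-equivalent, so the two dual unit balls are nested up to scaling by $K$, and hence the complexified norms are $K$-equivalent on $V_{\C}$. For the projective norm I will use its defining infimum over decompositions $u=\sum_i x_1^i\otimes\cdots\otimes x_n^i$. Each elementary product $\prod_j\norm{x_j^i}$ changes by at most $K^n$, so
\[
K^{-n}\norm{u}_{\pi,n}\leq\norm{u}'_{\pi,n}\leq K^n\norm{u}_{\pi,n}.
\]
The injective norm, which is a supremum of $\abs{\langle\lambda_1\otimes\cdots\otimes\lambda_n,u\rangle}$ over products of dual unit vectors, satisfies the same bound by the same reasoning.

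\emph{Changing the crossnorm family.} Fix a norm on $V$. The key estimate is a reverse inequality $\norm{u}_{\pi,n}\leq K^n\norm{u}_{\varepsilon,n}$, and I expect this to be the only step that needs care. I will choose a basis $e_1,\dots,e_d$ of $V$ with dual basis $e_1^*,\dots,e_d^*$, and let $c$ bound $\norm{e_i}$ and $\norm{e_i^*}$. Expanding $u=\sum_w u_w\,e_{w_1}\otimes\cdots\otimes e_{w_n}$ gives
\[
\norm{u}_{\pi,n}\leq c^n\sum_w\abs{u_w}.
\]
Each coordinate is a pairing, $u_w=\langle e^*_{w_1}\otimes\cdots\otimes e^*_{w_n},u\rangle$, so $\abs{u_w}\leq c^n\norm{u}_{\varepsilon,n}$. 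There are $d^n$ words, hence
\[
\norm{u}_{\pi,n}\leq(dc^2)^n\norm{u}_{\varepsilon,n}.
\]
Combined with the sandwich hypothesis $\norm{u}_{\varepsilon,n}\leq\norm{u}_{\alpha,n}\leq\norm{u}_{\pi,n}$, every admissible $\alpha_n$ is within a factor $(dc^2)^n$ of $\pi_n$. Taking $n$th roots then gives equality of the conditions $R(l)=\infty$.

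\emph{Hilbertian norms.} It remains to check that these fall under the hypothesis. After passing to an equivalent Euclidean norm, which is covered by the first reduction, I will verify that the Hilbert–Schmidt tensor norm is a crossnorm lying between the injective and projective norms. This follows from the triangle inequality on elementary tensors, which gives the upper bound, and from Cauchy–Schwarz on the pairing with products of unit vectors, which gives the lower bound. Then the second reduction applies.
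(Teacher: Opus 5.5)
Your proposal is correct and follows the paper's proof essentially step for step. Both arguments rest on three ingredients: a $K^n$ comparison of projective norms under equivalent base norms, the basis-expansion bound $\norm{u}_{\pi,n}\leq\bigl(d\max_j\norm{e_j}\max_j\norm{e_j^*}\bigr)^n\norm{u}_{\varepsilon,n}$, and the sandwich hypothesis. Your explicit Hilbertian check goes slightly beyond the paper, with one small caveat: the Hermitian norm on $V_{\C}$ is not the dual-ball complexification of the Euclidean norm (the two can differ by a factor up to $\sqrt{2}$), so you should state your first reduction, as the paper does, for arbitrary equivalent norms on $V_{\C}$; the same infimum argument applies verbatim.
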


\begin{proof}
If two norms on $V_{\C}$ satisfy
\[
c\norm{z}_1\leq \norm{z}_2\leq C\norm{z}_1,
\]
their induced projective tensor norms satisfy
\[
c^n\norm{u}^{(1)}_{\pi,n}
 \leq \norm{u}^{(2)}_{\pi,n}
 \leq C^n\norm{u}^{(1)}_{\pi,n}.
\]
For the comparison of crossnorms, choose a basis $e_1,\ldots,e_d$ of $V_{\C}$ and its dual basis $e_1^*,\ldots,e_d^*$. Expansion in the associated product basis gives a constant $K\geq1$, depending only on the basis, such that
\[
\norm{u}_{\pi,n}\leq K^n\norm{u}_{\varepsilon,n},
\qquad
K=d\Bigl(\max_j\norm{e_j}\Bigr)
     \Bigl(\max_j\norm{e_j^*}\Bigr)
\]
is one admissible choice. Hence any two such families $(\alpha_n)$ and $(\beta_n)$ satisfy
\[
K^{-n}\norm{u}_{\alpha,n}
 \leq \norm{u}_{\beta,n}
 \leq K^n\norm{u}_{\alpha,n}.
\]
Their homogeneous root growth rates therefore differ by at most fixed multiplicative factors, and one vanishes if and only if the other does.
\end{proof}

Whenever an imported result is formulated for a Euclidean state space, we choose an auxiliary Euclidean norm on $V$. In finite dimensions, equivalent norms define the same continuous bounded-variation and rectifiable paths. Equality of signatures, tree-likeness, tree-like equivalence, and tree-reducedness are algebraic or path-theoretic notions and are therefore unchanged. The norm-independence of the signature-tree arcs and gates used later is proved explicitly in \cref{prop:prefix-geodesic,thm:reduction-lemma}.

For a continuous bounded-variation path $\gamma:[0,T]\to V$, set $S_0(\gamma)=1$ and
\[
S_n(\gamma)=
\int_{0<t_1<\cdots<t_n<T}
\dd\gamma_{t_1}\otimes\cdots\otimes\dd\gamma_{t_n}
\qquad(n\geq1),
\]
so that
\[
S(\gamma)=\sum_{n\geq0}S_n(\gamma).
\]
The integrals may be interpreted as tensor-valued Riemann--Stieltjes integrals, or equivalently through finite signed product measures induced by scalar bounded-variation coordinates.
Write
\[
L(\gamma)=\Var(\gamma;[0,T]);
\]
when the parameter interval is clear, we also write $\Var(\gamma)$ for this quantity. Then
\begin{equation}\label{eq:factorial-signature}
\norm{S_n(\gamma)}_{\pi,n}
 \leq \frac{L(\gamma)^n}{n!}
\qquad(n\geq0).
\end{equation}

The signature is group-like, and its formal logarithm
\[
l=\log S(\gamma)=\sum_{n\geq1}l_n
\]
is primitive. We call the logarithmic signature $l$ \emph{entire} when $R(l)=\infty$. Its first level is
\[
v=l_1=S_1(\gamma)=\gamma_T-\gamma_0.
\]
For $z\in\C$, define the homogeneous dilation on the complexified tensor algebra by
\[
\delta_z:\Tens((V_{\C}))\longrightarrow\Tens((V_{\C})),
\qquad
\delta_z\!\left(\sum_{n\geq0}a_n\right)=\sum_{n\geq0}z^n a_n,
\]
and regard $\Tens((V))$ as its real subalgebra. Signatures are translation invariant, and paths will normally be translated so that $\gamma_0=0$.

\subsection{Path conventions, conjugacy, and reducedness}\label{sec:path-conventions}

For continuous paths $\eta:[0,A]\to V$ and $\widetilde\eta:[0,B]\to V$, write
\[
\eta\sim_{\mathrm{rep}}\widetilde\eta
\]
when there are continuous nondecreasing surjections $\sigma:[0,1]\to[0,A]$ and $\widetilde\sigma:[0,1]\to[0,B]$ such that
$\eta\circ\sigma=\widetilde\eta\circ\widetilde\sigma$.
This permits changes of speed and constant waiting intervals, but not reversal.
A path is \emph{based} if its initial point is the origin. All concatenation factors below are based; $*$ denotes the usual translated concatenation, and the based reverse of $\alpha:[0,A]\to V$ is
\[
\overleftarrow{\alpha}_s=\alpha_{A-s}-\alpha_A,
\qquad 0\leq s\leq A.
\]
We write $\lambda_v(t)=tv$, $0\leq t\leq1$.

A continuous bounded-variation path is \emph{tree-reduced} when, after waiting intervals are suppressed, no nonconstant subpath has trivial signature. \Cref{prop:reducedness,prop:weak-reparametrisation,prop:prefix-geodesic} relate this formulation to Hambly--Lyons reducedness, weak reparametrisation, and prefix-signature geodesics.

\begin{definition}[Path and signature conjugacy]\label{def:two-conjugacies}
A continuous path $\gamma:[0,T]\to V$ is \emph{path-conjugate to the line $\lambda_v$} if $v=\gamma_T-\gamma_0$ and there is a based bounded-variation path $\alpha$ such that
\[
\gamma-\gamma_0
 \sim_{\mathrm{rep}}
\alpha*\lambda_v*\overleftarrow{\alpha}.
\]
Its signature is \emph{conjugate to the line signature} if
\[
S(\gamma)=a\e^v a^{-1}
\]
for some bounded-variation signature $a$.
\end{definition}

Literal path conjugacy implies signature conjugacy by Chen's identity. For tree-reduced paths, \cref{thm:reduction-lemma} gives the converse at path level.

\subsection{The conjugacy-invariance estimate}\label{subsec:conjugacy-estimate}

\begin{proposition}[Conjugacy invariance of infinite radius]\label{prop:conjugacy-invariance}
Let $X$ and $Y$ be based continuous bounded-variation paths. Then
\[
R\bigl(\log S(X)\bigr)=\infty
\quad\Longleftrightarrow\quad
R\bigl(\log S(Y*X*\overleftarrow{Y})\bigr)=\infty.
\]
More precisely, if $l=\log S(X)$ and $a=S(Y)$, then
\[
\log S(Y*X*\overleftarrow{Y})=a l a^{-1}.
\]
\end{proposition}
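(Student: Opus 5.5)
The plan has two parts: first the algebraic identity $\log S(Y*X*\overleftarrow{Y})=ala^{-1}$, then the radius equivalence.

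\emph{The identity.} By Chen's identity,
\[
S(Y*X*\overleftarrow{Y})=S(Y)\,S(X)\,S(\overleftarrow{Y}).
\]
The based reverse has inverse signature, $S(\overleftarrow{Y})=S(Y)^{-1}=a^{-1}$. One way to see this is that $Y*\overleftarrow{Y}$ is tree-like, so its signature is $1$; alternatively, reversal sends $S_n$ to $(-1)^n$ times the word-reversed tensor, which is the antipode. Hence
\[
S(Y*X*\overleftarrow{Y})=a\,\e^{l}\,a^{-1}.
\]
Conjugation $x\mapsto axa^{-1}$ by an invertible element with $a_0=1$ is a continuous algebra automorphism of $\Tens((V))$ that preserves the augmentation ideal. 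It therefore commutes with the formal exponential series, so $a\e^{l}a^{-1}=\e^{ala^{-1}}$. Since $ala^{-1}$ has zero constant term, taking the formal logarithm gives the stated identity.

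\emph{The radius bound.} I will show that $R(l)=\infty$ implies $R(ala^{-1})=\infty$. The converse then follows by applying the same argument to $Y'=\overleftarrow{Y}$ and the path $Y*X*\overleftarrow{Y}$. Indeed,
\[
\overleftarrow{Y}*(Y*X*\overleftarrow{Y})*Y
\]
has signature $a^{-1}(a\e^{l}a^{-1})a=\e^{l}$, the same as $X$. The radius depends only on the signature, so no path-level cancellation is needed.

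Let $L=\Var(Y)$. By \eqref{eq:factorial-signature}, $\norm{a_n}_{\pi,n}\le L^n/n!$, and the same bound holds for $a^{-1}=S(\overleftarrow{Y})$. The projective norm is multiplicative under concatenation, so
\[
\norm{(ala^{-1})_n}_{\pi,n}\le\sum_{i+j+k=n}\frac{L^i}{i!}\,\norm{l_j}_{\pi,j}\,\frac{L^k}{k!}.
\]
Fix any $\rho>0$. Since $R(l)=\infty$, there is $C_\rho$ with $\norm{l_j}_{\pi,j}\le C_\rho\rho^j$ for all $j$. The right-hand side is then at most $C_\rho$ times the $n$th coefficient of the power series of $\e^{2Ls}(1-\rho s)^{-1}$. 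That coefficient is bounded by $\rho^n\e^{2L/\rho}$, since $\sum_{m\le n}(2L)^m\rho^{n-m}/m!\le \rho^n\e^{2L/\rho}$. Therefore
\[
\limsup_{n\to\infty}\norm{(ala^{-1})_n}_{\pi,n}^{1/n}\le\rho,
\]
and since $\rho>0$ was arbitrary, $R(ala^{-1})=\infty$.

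No step is a serious obstacle. The only point needing care is justifying $a\e^{l}a^{-1}=\e^{ala^{-1}}$ in the completed tensor algebra. This is a degreewise finite computation: $a(l^m)a^{-1}=(ala^{-1})^m$, and each graded level of both sides involves only finitely many terms.
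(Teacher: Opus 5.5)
Your proposal is correct and takes essentially the same route as the paper. Both arguments obtain $\log S(Y*X*\overleftarrow{Y})=ala^{-1}$ from Chen's identity and $S(\overleftarrow{Y})=a^{-1}$, combine the factorial bounds on $a$ and $a^{-1}$ with $\norm{l_k}_{\pi,k}\leq C_\rho\rho^k$ to get the $\rho^n\e^{2L/\rho}$ estimate, and deduce the converse by conjugating back with $a^{-1}$. Your generating-function bound is just a repackaging of the paper's convolution sum, and only submultiplicativity of the projective norm is actually needed there.
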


The proof uses only the factorial bounds for bounded-variation signatures and submultiplicativity of the projective tensor norm.

\begin{proof}
Chen's identity gives
\[
S(Y*X*\overleftarrow{Y})=a\e^l a^{-1}=\e^{a l a^{-1}}.
\]
Let $A=L(Y)$ and write $a_p$ and $(a^{-1})_q$ for the homogeneous components of $a$ and $a^{-1}$.
They satisfy
\[
\norm{a_p}_{\pi,p}\leq \frac{A^p}{p!},
\qquad
\norm{(a^{-1})_q}_{\pi,q}\leq \frac{A^q}{q!}.
\]
If $l$ is entire, then for every $\varepsilon>0$ there is $M_\varepsilon<\infty$ such that
$\norm{l_k}_{\pi,k}\leq M_\varepsilon\varepsilon^k$ for all $k\geq1$.
Consequently,
\[
\norm{(a l a^{-1})_n}_{\pi,n}
\leq
M_\varepsilon
\sum_{k=1}^{n}
\varepsilon^k\frac{(2A)^{n-k}}{(n-k)!}.
\]
Writing $j=n-k$ gives the explicit estimate
\[
\begin{aligned}
\sum_{k=1}^{n}
\varepsilon^k\frac{(2A)^{n-k}}{(n-k)!}
&=\varepsilon^n
  \sum_{j=0}^{n-1}\frac{(2A/\varepsilon)^j}{j!}\\
&\leq \e^{2A/\varepsilon}\varepsilon^n.
\end{aligned}
\]
Hence
\[
\limsup_{n\to\infty}
\norm{(a l a^{-1})_n}_{\pi,n}^{1/n}
\leq \varepsilon.
\]
Since $\varepsilon>0$ is arbitrary, $a l a^{-1}$ has infinite homogeneous radius.
The converse follows by conjugating with $a^{-1}=S(\overleftarrow{Y})$.
\end{proof}

\subsection{External inputs}\label{sec:external}

The proof imports the following results; all other ingredients are established below or used in their standard finite-dimensional or bounded-variation form.

\begin{enumerate}
\item \textbf{Vesentini subharmonicity.}
For a holomorphic map into a complex unital Banach algebra, the logarithm of the spectral radius is an extended-real-valued subharmonic function, taking the value $-\infty$ at quasinilpotent points \cite[pp.~427--429]{Vesentini1968}; see also \cite[Theorem~6.4.2]{Ransford1995}.

\item \textbf{Holomorphic and rational spectral mapping.}
If $f$ is holomorphic on a neighbourhood of the spectrum of a matrix or Banach-algebra element, then $\spec f(A)=f(\spec A)$. The same conclusion holds for a rational function whose poles avoid $\spec A$; see \cite[Chapter~VII, \S4]{Conway1990}.

\item \textbf{Primitive elements and free Lie polynomials.}
In each finite homogeneous degree, the primitive elements of the tensor Hopf algebra are precisely the free Lie elements; see Friedrichs' theorem as presented in \cite{Reutenauer1993}. Thus every homogeneous component of a logarithmic signature is a finite free Lie polynomial.

\item \textbf{Signature uniqueness and reduced representatives.}
Trivial signature is equivalent to tree-likeness, equality of signatures is equivalent to tree-like equivalence, and each bounded-variation equivalence class has a unique reduced representative up to translation and increasing reparametrisation \cite{HamblyLyons2010}. In the metric on $G_{\mathrm{p.r.c.}}^{(*)}$ from \cite[Definition~2.1]{BGLY2016}, Boedihardjo--Geng--Lyons--Yang prove existence and uniqueness, up to reparametrisation, of the injective reduction of every continuous finite-$p$-variation $G_{\mathrm{p.r.c.}}^{(*)}$-valued path \cite[Lemma~4.6]{BGLY2016}; for $p=1$, its first-level projection is the Hambly--Lyons reduced representative \cite[Remark~4.1]{BGLY2016}. \Cref{prop:reducedness} verifies the continuity and finite-$1$-variation hypotheses in that metric before applying the result, and combines it with a direct excision argument to characterise tree-reduced bounded-variation paths by the absence of nonconstant trivial-signature subpaths. Weak reparametrisation is treated in \cref{prop:weak-reparametrisation}, and prefix geodesics in \cref{prop:prefix-geodesic}.

\item \textbf{Metric signature tree.}
For a finite-dimensional Euclidean space $E$, let $G_N(E)$ be the free step-$N$ nilpotent group, let $d_N$ be its Carnot--Carath\'eodory metric, and let $1_N$ be its identity. Le Donne--Z\"ust do not use the unrestricted algebraic inverse limit of all compatible truncations. Their inverse limit consists of the compatible sequences $(g_N)_{N\geq1}$ with $g_N\in G_N(E)$ satisfying the finite-distance condition
\[
\sup_{N\geq1}d_N(1_N,g_N)<\infty.
\]
Completeness of precisely this metric space is \cite[Lemma~2.2(2)]{LeDonneZust2021}; its coordinatewise group structure and left invariance are \cite[Lemma~3.1]{LeDonneZust2021}. The natural group isomorphism with rectifiable signatures and the complete metric-tree conclusion are \cite[Theorem~4.4]{LeDonneZust2021}. Thus no additional completion points are introduced. The metric-tree argument there uses Hambly--Lyons signature uniqueness.

\item \textbf{Fourier uniqueness.}
A finite complex Borel measure on $\R^r$ is determined by its Fourier transform \cite{Rudin1990}.
\end{enumerate}

\section{Exact matrix isospectrality}\label{sec:matrix-isospectrality}

\subsection{Evaluation of the formal exponential}

Fix $q\geq1$, write $I=I_q$ for the identity in $M_q(\C)$, and let $\Phi:V_{\C}\to M_q(\C)$ be complex linear, with operator norm $\norm{\Phi}$ relative to the norms fixed above. Its multiplicative extension on $V_{\C}^{\otimes n}$ is denoted by $\Phi^{(n)}$ for $n\geq1$, and $\Phi^{(0)}(1)=I$.
By the defining property of the projective tensor norm,
\begin{equation}\label{eq:phi-bound}
\norm{\Phi^{(n)}(a_n)}
 \leq \norm{\Phi}^n\norm{a_n}_{\pi,n}.
\end{equation}
The notation $\Phi(\sum_{n\geq0}a_n)$ below denotes the convergent series $\sum_{n\geq0}\Phi^{(n)}(a_n)$.

\begin{lemma}[Analytic evaluation lemma]\label{lem:analytic-evaluation}
Let $\gamma:[0,T]\to V$ be continuous and of bounded variation, and put $g=S(\gamma)$ and $l=\log g$.
Assume $R(l)=\infty$.
Define
\[
A_{\Phi}(z)=\sum_{n\geq 1}z^n\Phi^{(n)}(l_n),
\qquad
G_{\Phi}(z)=\sum_{n\geq 0}z^n\Phi^{(n)}(S_n(\gamma)).
\]
Then both functions are entire and
\begin{equation}\label{eq:evaluation-exp}
G_{\Phi}(z)=\e^{A_{\Phi}(z)}
\qquad (z\in\C).
\end{equation}
Moreover,
\begin{equation}\label{eq:G-growth}
\norm{G_{\Phi}(z)}
 \leq \exp\!\bigl(\abs{z}\,\norm{\Phi}\,L(\gamma)\bigr).
\end{equation}
\end{lemma}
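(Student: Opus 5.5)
The plan is to establish the three claims in the order: entireness of both series, the growth bound \eqref{eq:G-growth}, and then the exponential identity \eqref{eq:evaluation-exp}.

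\emph{Entireness.} By \eqref{eq:phi-bound}, $\norm{z^n\Phi^{(n)}(l_n)}\leq (\abs{z}\norm{\Phi})^n\norm{l_n}_{\pi,n}$. Since $R(l)=\infty$, for every $\varepsilon>0$ there is $M_\varepsilon$ with $\norm{l_n}_{\pi,n}\leq M_\varepsilon\varepsilon^n$. Hence the series defining $A_\Phi$ converges absolutely and locally uniformly on all of $\C$, so $A_\Phi$ is entire as a locally uniform limit of matrix polynomials. For $G_\Phi$, the factorial bound \eqref{eq:factorial-signature} gives
\[
\norm{z^n\Phi^{(n)}(S_n(\gamma))}\leq \frac{(\abs{z}\norm{\Phi}L(\gamma))^n}{n!}.
\]
This yields local uniform convergence, so $G_\Phi$ is entire. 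Summing the same bound over $n$ gives \eqref{eq:G-growth} at once.

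\emph{The identity.} In $\Tens((V_{\C}))$ we have $\delta_z S(\gamma)=\exp(\delta_z l)$ formally, because $\delta_z$ is a graded algebra automorphism and $g=\e^l$. The issue is passing this through the evaluation $\Phi$, which is only defined on suitably convergent series. I would use the identity theorem. For $\abs{z}$ small, specifically $\abs{z}\norm{\Phi}<R_0$ for a fixed $R_0$, the double series $\sum_m \frac{1}{m!}A_\Phi(z)^m$, expanded into homogeneous pieces $\sum_m\frac1{m!}\sum_{k_1+\dots+k_m=n}z^n\Phi^{(n)}(l_{k_1}\cdots l_{k_m})$, converges absolutely. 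Indeed, its absolute sum is at most $\exp\bigl(\sum_k(\abs{z}\norm{\Phi})^k\norm{l_k}_{\pi,k}\bigr)<\infty$ for every $z$, using multiplicativity of $\Phi^{(n)}$ on elementary products and submultiplicativity of the projective norm. Absolute convergence justifies regrouping by total degree $n$. The degree-$n$ group is exactly $z^n\Phi^{(n)}\bigl((\e^l)_n\bigr)=z^n\Phi^{(n)}(S_n(\gamma))$, because the degree-$n$ component of the formal exponential is the finite sum $\sum_m\frac1{m!}\sum_{k_1+\cdots+k_m=n}l_{k_1}\cdots l_{k_m}$.

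In fact the absolute bound above is finite for all $z\in\C$ since $l$ is entire, so the regrouping works globally and no identity theorem is needed. The identity theorem remains available as a fallback: if one prefers, prove the identity for small $\abs{z}$ and extend it by analyticity of both sides.

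The only delicate step is this rearrangement. It needs care to note that $\Phi^{(n)}$ is linear on $V_{\C}^{\otimes n}$ and satisfies $\Phi^{(j+k)}(ab)=\Phi^{(j)}(a)\Phi^{(k)}(b)$ for homogeneous $a,b$. It also needs $\norm{l_{k_1}\cdots l_{k_m}}_{\pi,n}\leq\prod_i\norm{l_{k_i}}_{\pi,k_i}$, which holds for the projective norm. Everything else is routine.
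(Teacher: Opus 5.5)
Your proposal is correct and follows essentially the same route as the paper: local uniform convergence from $R(l)=\infty$ and from the factorial bound, then matching of degree-$n$ terms using that $(\e^l)_n$ is a finite sum of products of $l_1,\ldots,l_n$ together with multiplicativity of the maps $\Phi^{(k)}$. The paper phrases the last step as equality of Taylor coefficients of two entire functions. Your absolute-convergence regrouping, which is valid for every $z$ because $l$ is entire, is exactly the justification for that Taylor-coefficient computation.
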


\begin{proof}
The series defining $A_{\Phi}$ converges locally uniformly by $R(l)=\infty$ and \cref{eq:phi-bound}.
The factorial estimate \cref{eq:factorial-signature} gives local uniform convergence of $G_{\Phi}$ and directly yields \cref{eq:G-growth}.

The formal identity $g=\e^l$ is homogeneous: for every $n$, the degree-$n$ coefficient of $g$ is a finite sum of products involving only $l_1,\ldots,l_n$.
After applying the multiplicative maps $\Phi^{(k)}$, the Taylor coefficient of degree $n$ in $G_{\Phi}$ is therefore the Taylor coefficient of degree $n$ in $\e^{A_{\Phi}}$.
Both sides are entire matrix-valued functions, so equality of all Taylor coefficients gives \cref{eq:evaluation-exp}.
\end{proof}

\begin{lemma}[Matrix development and the evaluated signature]\label{lem:development-signature}
Let $q\geq1$, write $I=I_q$, let $\beta:[0,T_\beta]\to V$ be continuous and of bounded variation, and let $\Phi:V_{\C}\to M_q(\C)$ be complex linear. For $z\in\C$, let $U^z$ be the solution of the right matrix-development equation
\begin{equation}\label{eq:matrix-development}
\dd U_t^z=U_t^z\,z\Phi(\dd\beta_t),
\qquad U_0^z=I.
\end{equation}
Then
\begin{align}
U_{T_\beta}^z
&=I+\sum_{n\geq1}z^n
  \int_{0<t_1<\cdots<t_n<T_\beta}
  \Phi(\dd\beta_{t_1})\cdots\Phi(\dd\beta_{t_n}) \notag\\
&=\sum_{n\geq0}z^n\Phi^{(n)}\!\left(S_n(\beta)\right).
\label{eq:development-signature}
\end{align}
The series converges absolutely and locally uniformly in $z$, and
\[
\norm{U_{T_\beta}^z}
 \leq \exp\!\bigl(\abs{z}\,\norm{\Phi}\,L(\beta)\bigr).
\]
If $R(\log S(\beta))=\infty$ and
$\log S(\beta)=\sum_{n\geq1}b_n$, then
\begin{equation}\label{eq:development-logarithm}
U_{T_\beta}^z
 =\exp\!\left(\sum_{n\geq1}z^n\Phi^{(n)}(b_n)\right).
\end{equation}
\end{lemma}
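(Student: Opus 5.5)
The plan is to build the solution by Picard iteration and identify its terms with the signature levels. Existence and uniqueness of $U^z$ for the linear Riemann--Stieltjes equation \eqref{eq:matrix-development} follow from the fact that $t\mapsto z\Phi(\beta_t)$ is a continuous bounded-variation matrix path. We then iterate the integral form
\[
U_t^z=I+\int_0^t U_s^z\,z\Phi(\dd\beta_s).
\]
Substituting $N$ times gives a finite sum of ordered iterated integrals $z^n\int_{0<t_1<\cdots<t_n<t}\Phi(\dd\beta_{t_1})\cdots\Phi(\dd\beta_{t_n})$ for $n<N$, plus a remainder. The remainder is an $N$-fold iterated integral whose integrand involves $U^z$. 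It is bounded by $\sup_s\norm{U_s^z}\,(\abs{z}\norm{\Phi}L(\beta))^N/N!$, using the usual simplex-volume estimate for the variation measure $\dd\mu_s=\norm{\Phi}\,\abs{\dd\beta}_s$. An a priori Gr\"onwall bound on $\sup_s\norm{U^z_s}$ shows this remainder tends to $0$. This gives the first line of \eqref{eq:development-signature}.

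For the second line, note that $\Phi$ is linear and $\Phi^{(n)}$ is multiplicative on elementary tensors. Hence $\Phi^{(n)}$ commutes with the tensor-valued Riemann--Stieltjes integral: apply it to the Riemann sums, which are finite sums of elementary tensors $\Delta\beta_{i_1}\otimes\cdots\otimes\Delta\beta_{i_n}$, and pass to the limit using the continuity bound \eqref{eq:phi-bound}. This yields $\int\Phi(\dd\beta_{t_1})\cdots\Phi(\dd\beta_{t_n})=\Phi^{(n)}(S_n(\beta))$.

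Absolute and locally uniform convergence, and the exponential bound, then follow at once from \eqref{eq:phi-bound} and \eqref{eq:factorial-signature}:
\[
\sum_n\abs{z}^n\norm{\Phi}^n\norm{S_n(\beta)}_{\pi,n}\leq\sum_n(\abs{z}\norm{\Phi}L(\beta))^n/n!.
\]

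Finally, \eqref{eq:development-logarithm} is exactly \cref{lem:analytic-evaluation} applied to $\beta$. Under $R(\log S(\beta))=\infty$, the function $G_\Phi(z)=\sum_n z^n\Phi^{(n)}(S_n(\beta))$ equals $\e^{A_\Phi(z)}$ with $A_\Phi(z)=\sum_n z^n\Phi^{(n)}(b_n)$, and $G_\Phi(z)=U^z_{T_\beta}$ by the previous step.

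The only step that needs care is the interchange of $\Phi^{(n)}$ with the iterated integral together with the remainder estimate. I would handle both by working coordinatewise: fix a basis of $V$, so that $\beta$ has scalar bounded-variation coordinates and all iterated integrals become integrals against finite signed product measures on the simplex. Since $\beta$ is continuous, these measures are atomless, so the boundary of the simplex (the diagonals) has measure zero. The simplex volume bound $\mu^{\otimes n}(\{t_1<\cdots<t_n\})\le \mu([0,T])^n/n!$ then holds, and all identities reduce to finite-dimensional linear algebra.
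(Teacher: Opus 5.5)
Your proposal is correct and follows essentially the same route as the paper's proof. Both arguments use Picard iteration, identify the ordered matrix integrals with $\Phi^{(n)}(S_n(\beta))$ by multiplicativity and the matching order of factors, obtain convergence and the exponential bound from the factorial estimate, and get \eqref{eq:development-logarithm} from \cref{lem:analytic-evaluation}. Your explicit remainder/Gr\"onwall estimate and your Riemann-sum justification for commuting $\Phi^{(n)}$ with the integral only fill in details that the paper leaves implicit.
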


\begin{proof}
Picard iteration of \cref{eq:matrix-development} gives the first series in
\cref{eq:development-signature}.
Its degree-$n$ term has norm at most
\[
\frac{\bigl(\abs{z}\,\norm{\Phi}\,L(\beta)\bigr)^n}{n!},
\]
so the series converges absolutely and locally uniformly and satisfies the displayed exponential bound.
Because the extension of $\Phi$ is multiplicative and the right-development convention orders the matrix factors in the same order as the tensor factors,
\[
\int_{0<t_1<\cdots<t_n<T_\beta}
\Phi(\dd\beta_{t_1})\cdots\Phi(\dd\beta_{t_n})
 =\Phi^{(n)}\!\left(S_n(\beta)\right).
\]
This proves \cref{eq:development-signature}; the same Picard estimate also gives uniqueness of the solution.
Under the entire-logarithm hypothesis, \cref{lem:analytic-evaluation} identifies the last series in \cref{eq:development-signature} with the exponential in \cref{eq:development-logarithm}.
\end{proof}

\subsection{A spectral Schwarz lemma}

For $q\geq1$ and $A\in M_q(\C)$, write $r(A)$ for its spectral radius.
Write
\[
\mathbb D=\set{z\in\C:\abs{z}<1},
\qquad
\overline{\mathbb D}_{\rho}=\set{z\in\C:\abs{z}\leq\rho}
\quad(\rho>0).
\]

\begin{lemma}[Extended-valued maximum principle on a disc]\label{lem:extended-subharmonic-maximum}
Let $\Omega\subset\C$ be open and contain the closed disc $\overline{\mathbb{D}}_{\rho}$, and let $u:\Omega\to[-\infty,\infty)$ be subharmonic. Assume that $u$ is not identically $-\infty$ on the connected component of $\Omega$ containing the disc and that $u$ is finite at some point of $\set{\abs{z}=\rho}$. Then
\[
\sup_{\abs{z}\leq\rho}u(z)
 \leq \sup_{\abs{z}=\rho}u(z).
\]
\end{lemma}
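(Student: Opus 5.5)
We reduce the claim to the classical maximum principle for subharmonic functions taking values in $[-\infty,\infty)$, applied on the disc $\mathbb{D}_\rho$ and its boundary circle. Put $M=\sup_{\abs{z}=\rho}u(z)$. Since $u$ is finite at some point of the circle, $M>-\infty$. Since $u$ is upper semicontinuous and the circle is compact, $M<\infty$ and the supremum is attained. If $M=+\infty$ there is nothing to prove, and that case cannot occur anyway, so we may take $M$ to be a real number.

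The main step is the standard subharmonic maximum principle on the open disc $D=\set{\abs{z}<\rho}$, in the following boundary form. If $u$ is subharmonic on a neighbourhood of $\overline D$ and $\limsup_{z\to\zeta}u(z)\leq M$ for every $\zeta\in\partial D$, then $u\leq M$ on $D$. Here upper semicontinuity gives $\limsup_{z\to\zeta}u(z)\leq u(\zeta)\leq M$ at each boundary point. To prove it, we look at $w=u-M$, which is subharmonic on $\Omega$, and suppose $s=\sup_{\overline D}w>0$. An upper semicontinuous function attains its supremum on a compact set, so $s$ is attained at some point of $\overline D$. Because $w\leq0$ on $\partial D$, that point lies in the interior $D$. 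The set $K=\set{z\in\overline D:w(z)=s}$ is closed, since $\set{w\geq s}$ is closed and $s$ is the supremum. The sub-mean-value inequality at any point of $K\cap D$, together with upper semicontinuity, forces $w=s$ on small circles around that point, and hence on a neighbourhood of it. So $K\cap D$ is open in $D$. It is also relatively closed and nonempty, and $D$ is connected, so $K\supset D$. By upper semicontinuity $w\geq s$ on $\partial D$, which contradicts $w\leq0$ there. Equivalently, one can cite \cite[Theorem~2.3.1]{Ransford1995}.

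The hypothesis that $u\not\equiv-\infty$ on the component of $\Omega$ containing the disc is what makes $u$ subharmonic in the strict sense used in \cite{Ransford1995}. The sub-mean-value argument above is harmless even when $u$ takes the value $-\infty$: if $u\equiv-\infty$ on $D$, the inequality holds trivially because $M>-\infty$. The only delicate point is therefore bookkeeping for the extended values. We need $M$ finite so that subtracting it is legitimate, and we need the mean-value step to hold with the value $-\infty$ allowed, which it does because the integrals take values in $[-\infty,\infty)$. We expect no real obstacle beyond checking these conventions.
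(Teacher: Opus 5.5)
Your argument is correct and follows essentially the same route as the paper. Upper semicontinuity forces any supremum above the boundary bound to be attained at an interior point, and the maximum principle on the connected open disc then gives constancy, which contradicts the boundary bound via upper semicontinuity; the only differences are that you prove the maximum principle directly from the sub-mean-value inequality where the paper cites it, and the paper phrases the reduction through an $\varepsilon$-neighbourhood of the circle.
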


\begin{proof}
Put $M=\sup_{\abs{z}=\rho}u(z)$. Upper semicontinuity on the compact boundary circle gives $M<\infty$. Fix $\varepsilon>0$. Upper semicontinuity and compactness give a relative neighbourhood $U$ of the boundary circle in $\overline{\mathbb{D}}_{\rho}$ on which $u<M+\varepsilon$. If $u>M+\varepsilon$ somewhere in the complement of $U$, then upper semicontinuity on that compact complement makes $u$ attain a finite maximum greater than $M+\varepsilon$ at an interior point of the disc. The local maximum principle for subharmonic functions would force $u$ to be constant on the relevant connected component, contradicting the bound on $U$. Hence $u\leq M+\varepsilon$ on the closed disc. Letting $\varepsilon\downarrow0$ proves the claim. This is the standard extended-valued maximum principle; see \cite[Section~2.3]{Ransford1995}.
\end{proof}

The following result is the finite-dimensional matrix case of the standard spectral Schwarz theorem for holomorphic maps into a unital Banach algebra; see \cite[Theorem~1]{Vesentini2008}. We include the proof for completeness, to fix the normalisation used below and to record explicitly the treatment of quasinilpotent values in the extended-valued subharmonic argument.

\begin{lemma}[Spectral Schwarz lemma]\label{lem:spectral-schwarz}
Let $q\geq1$ and let $F:\mathbb{D}\to M_q(\C)$ be holomorphic, with $F(0)=0$ and $r(F(z))<1$ for every $z\in\mathbb{D}$.
Then
\[
r(F(z))\leq \abs{z}
\qquad (z\in\mathbb{D}).
\]
\end{lemma}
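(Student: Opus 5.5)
The plan is to run the classical Schwarz-lemma argument on the subharmonic function $\log r(F(z))$ instead of a holomorphic modulus. Since $F(0)=0$, the Taylor expansion gives $F(z)=zG(z)$ with $G:\mathbb D\to M_q(\C)$ holomorphic, namely $G(z)=\sum_{n\geq1}z^{n-1}F_n$ where $F(z)=\sum_{n\geq1}z^nF_n$. For $z\neq0$ the spectrum scales, $\spec F(z)=z\,\spec G(z)$, so
\[
r(F(z))=\abs{z}\,r(G(z)).
\]
It therefore suffices to prove $r(G(z))\leq1$ on $\mathbb D$. The inequality at $z=0$ is trivial, because $r(F(0))=0$.

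By the Vesentini subharmonicity input (first item of \cref{sec:external}), $u=\log r(G(\cdot))$ is an extended-real-valued subharmonic function on $\mathbb D$. Fix $z_0\in\mathbb D$ and a radius $\rho$ with $\abs{z_0}<\rho<1$. On the circle $\abs{z}=\rho$ we have $r(G(z))=r(F(z))/\rho<1/\rho$, hence $u<-\log\rho$ there.

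I would then apply \cref{lem:extended-subharmonic-maximum} on $\overline{\mathbb D}_{\rho}$ to get $u(z_0)\leq-\log\rho$. Letting $\rho\uparrow1$ gives $r(G(z_0))\leq1$, and so $r(F(z_0))\leq\abs{z_0}$.

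The hypotheses of \cref{lem:extended-subharmonic-maximum} are the only delicate point, and I expect them to be the main obstacle. That lemma needs $u\not\equiv-\infty$ and $u$ finite at some point of the circle, and both can fail. If $G(z)$ is nilpotent for every $z$, for instance, then $u\equiv-\infty$. I would handle this with a case split instead of invoking the lemma blindly.

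\begin{itemize}
\item If $u(z_0)=-\infty$, the desired bound $r(G(z_0))=0\leq1$ holds trivially.
\item If $u$ is identically $-\infty$ on the circle $\abs{z}=\rho$, I would show it forces $u(z_0)=-\infty$. The coefficients $\sigma_k(G(z))$ are holomorphic and vanish on the circle, so they vanish identically, and $G(z_0)$ is nilpotent.
\item Otherwise $u$ is finite somewhere on the circle and not identically $-\infty$ on the disc. The lemma then applies directly.
\end{itemize}

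An alternative route avoids subharmonicity theory altogether. One can use $r(A)=\lim_m\norm{A^m}^{1/m}$, or the fact that $\log r=\max_j\log\abs{\lambda_j}$ is locally a maximum of logarithms of moduli of branches of algebraic functions. However, the subharmonic route with the case analysis above matches the tools the paper has already set up, so I would follow it.
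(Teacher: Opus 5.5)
Your proposal is correct and follows essentially the same argument as the paper. Both write $F(z)=zH(z)$, bound $\log r(H)$ on circles of radius $\rho<1$ using Vesentini's subharmonicity and the extended maximum principle, handle the case where $H$ is nilpotent on the whole circle by the identity theorem for the characteristic coefficients, and let $\rho\uparrow1$. The only difference is cosmetic: the paper uses continuity of the spectral radius to get a strict boundary maximum $m_\rho<\rho^{-1}$, whereas your non-strict bound $u(z_0)\le-\log\rho$ shows that strictness is not needed.
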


\begin{proof}
Set $H(z)=F(z)/z$ for $z\neq 0$ and $H(0)=F'(0)$.
This is holomorphic.
Fix $0<\rho<1$.
On $\abs{z}=\rho$,
\[
r(H(z))=\frac{r(F(z))}{\rho}<\frac{1}{\rho}.
\]
In finite dimensions the spectral radius is continuous as a function of the matrix entries.
Hence $z\mapsto r(H(z))$ attains on the boundary circle a maximum
\[
m_\rho:=\max_{\abs{z}=\rho}r(H(z))<\rho^{-1};
\]
the strict inequality follows from the pointwise strict bound and compactness.
If $m_\rho=0$, every $H(z)$ on the circle is quasinilpotent.
The non-leading coefficients of $\det(\zeta I-H(z))$ are holomorphic functions of $z$ and vanish on that circle, hence vanish identically on the disc.
Thus $H(z)$ is quasinilpotent throughout the disc and the conclusion is immediate.
Assume $m_\rho>0$.
By Vesentini's theorem \cite[pp.~427--429]{Vesentini1968}; see also \cite[Theorem~6.4.2]{Ransford1995},
\[
u(z)=\log r(H(z))
\]
is subharmonic in the extended-valued sense; values equal to $-\infty$ at quasinilpotent points are allowed. Since $m_\rho>0$, the function is not identically $-\infty$ and is finite at a boundary point. By \cref{lem:extended-subharmonic-maximum},
\[
\sup_{\abs{z}\leq\rho}u(z)
 \leq \sup_{\abs{z}=\rho}u(z)
 =\log m_\rho<-\log\rho.
\]
Therefore
\[
u(z)\leq \log m_\rho< -\log\rho
\qquad (\abs{z}\leq\rho),
\]
and hence $r(H(z))<\rho^{-1}$ on the closed disc.
For fixed $z\in\mathbb{D}$, let $\rho\uparrow1$ through values with $\abs{z}<\rho$.
Then $r(H(z))\leq1$ and $r(F(z))=\abs{z}\,r(H(z))\leq\abs{z}$; the case $z=0$ follows directly from $F(0)=0$.
\end{proof}

\subsection{Polynomial spectral invariants}

For $A\in M_q(\C)$, let $\sigma_k(A)$ be the $k$th elementary symmetric polynomial of the eigenvalues, counted with algebraic multiplicity:
\[
\det(\zeta I-A)
 = \zeta^q-\sigma_1(A)\zeta^{q-1}+\cdots+(-1)^q\sigma_q(A).
\]

We now prove \cref{thm:spectral-growth}. The spectral Schwarz lemma itself is standard; the additional step needed here is to upgrade the one-sided bound on $\Re\spec A(z)$ supplied by \cref{eq:exp-growth} to a uniform modulus bound on the spectrum. A rational map sends the controlling half-plane to the unit disc, where \cref{lem:spectral-schwarz} applies; the resulting linear spectral-modulus bound and Cauchy's estimates then force the degree-$k$ polynomial rigidity of the characteristic coefficients. The argument uses spectral sets and symmetric functions only, without choosing eigenvalue branches or scalar logarithms.

\begin{proof}[Proof of \cref{thm:spectral-growth}]
By the holomorphic spectral mapping theorem \cite[Chapter~VII, \S4]{Conway1990},
\[
r(\e^{A(z)})
 = \exp\!\left(\max\set{\Re\lambda:\lambda\in\spec A(z)}\right).
\]
Since $r(\e^{A(z)})\leq\norm{\e^{A(z)}}$, every $\lambda\in\spec A(z)$ satisfies
\begin{equation}\label{eq:halfplane-growth}
\Re\lambda\leq\tau\abs{z}+\log C.
\end{equation}

Fix $R>0$ and set
\[
M_R=\tau R+\log C+1,
\qquad
F_R(w)=\frac{A(Rw)}{M_R}
\qquad (\abs{w}<1).
\]
By \cref{eq:halfplane-growth}, $\spec F_R(w)$ lies in the open half-plane $\Re\mu<1$.
The scalar map
\[
\phi(\mu)=\frac{\mu}{2-\mu}
\]
fixes $0$ and maps that half-plane biholomorphically onto $\mathbb{D}$. Indeed,
\[
\abs{\phi(\mu)}<1
\quad\Longleftrightarrow\quad
\abs{\mu}<\abs{2-\mu}
\quad\Longleftrightarrow\quad
\Re\mu<1,
\]
and its inverse on $\mathbb{D}$ is $b\mapsto 2b/(1+b)$.
Because $2\notin\spec F_R(w)$, the matrix $2I-F_R(w)$ is invertible.
Its inverse is holomorphic in $w$, and therefore
\[
B_R(w)=\phi(F_R(w))=F_R(w)(2I-F_R(w))^{-1}
\]
is holomorphic.
The exclusion $2\notin\spec F_R(w)$ is precisely the pole-avoidance hypothesis required by the rational spectral mapping theorem \cite[Chapter~VII, \S4]{Conway1990}. Since the only pole of $\phi$ is at $2$, that theorem gives
\[
\spec B_R(w)=\phi(\spec F_R(w)).
\]
In the present matrix setting this can also be checked directly: for $b\neq-1$,
\[
bI-B_R(w)
 =\bigl(2bI-(1+b)F_R(w)\bigr)(2I-F_R(w))^{-1},
\]
so $bI-B_R(w)$ is singular exactly when $2b/(1+b)\in\spec F_R(w)$, equivalently $b=\phi(\mu)$ for some $\mu\in\spec F_R(w)$; for $b=-1$ the left-hand side is $-2(2I-F_R(w))^{-1}$ and is invertible.
Thus $B_R(0)=0$ and $r(B_R(w))<1$.
This direct resolvent identity also confirms that no eigenvalue labelling or logarithm branch is being used: the argument is formulated entirely in terms of spectral sets and symmetric functions of the eigenvalues.
By \cref{lem:spectral-schwarz},
\[
r(B_R(w))\leq\abs{w}.
\]

Suppose $\abs{w}\leq\tfrac12$ and $\mu\in\spec F_R(w)$.
Then
\[
b=\frac{\mu}{2-\mu}\in\spec B_R(w),
\qquad
\abs{b}\leq\tfrac12.
\]
Since $\mu=2b/(1+b)$,
\[
\abs{\mu}
 \leq \frac{2\abs{b}}{1-\abs{b}}
 \leq 2.
\]
Taking $R=2S$, every $z$ with $\abs{z}\leq S$ can be written as $z=Rw$ with $\abs{w}\leq\tfrac12$, and hence
\begin{equation}\label{eq:eigenvalue-linear-bound}
\max_{\lambda\in\spec A(z)}\abs{\lambda}
 \leq 2M_{2S}
 =4\tau S+2\log C+2.
\end{equation}
Thus the original one-sided estimate on $\Re\lambda$ has been upgraded to a uniform modulus bound, linear in the radius of the $z$-disc.
It follows that
\[
\abs{\sigma_k(A(z))}
 \leq \binom{q}{k}
       (4\tau S+2\log C+2)^k
\qquad (\abs{z}\leq S).
\]
Since $A$ is entire and $\sigma_k$ is a polynomial in the matrix entries, the scalar function $z\mapsto\sigma_k(A(z))$ is entire.
Write $\sigma_k(A(z))=\sum_{n\geq 0}c_n z^n$.
Applying Cauchy's coefficient estimate on the circle $\abs{z}=S$ gives, for $n>k$,
\[
\abs{c_n}
 \leq \binom{q}{k}
       \frac{(4\tau S+2\log C+2)^k}{S^n}
 =O(S^{k-n}).
\]
Letting $S\to\infty$ shows $c_n=0$ for every $n>k$.
Thus $\sigma_k(A(z))$ is a polynomial of degree at most $k$.

If $A(z)=zH+O(z^2)$, define
\[
B(z)=
\begin{cases}
A(z)/z, & z\neq0,\\
A'(0), & z=0.
\end{cases}
\]
Then $B$ is entire, $A(z)=zB(z)$, and $B(0)=H$. Since $\sigma_k$ is a homogeneous polynomial of degree $k$ in the matrix entries,
\[
\sigma_k(A(z))=\sigma_k(zB(z))=z^k\sigma_k(B(z)).
\]
The left-hand side is already a polynomial of degree at most $k$, so it must be a constant multiple of $z^k$. Evaluating the remaining factor at $z=0$ gives that constant as $\sigma_k(B(0))=\sigma_k(H)$; hence
\[
\sigma_k(A(z))=z^k\sigma_k(H)
\]
identically.
This proves \cref{eq:charpoly-linear}.
Equality of characteristic polynomials gives equality of the eigenvalue multisets with algebraic multiplicity, and the trace of a matrix power is the corresponding power sum of its eigenvalues.
Hence \cref{eq:trace-powers} follows.
\end{proof}

\subsection{Exact isospectrality for bounded-variation signatures}

\begin{corollary}[Exact matrix isospectrality]\label{thm:exact-isospectrality}
Let $V$ be a finite-dimensional real normed space, let $\gamma:[0,T]\to V$ be continuous and of bounded variation, put $g=S(\gamma)$, let $l=\log g=\sum_{n\geq 1}l_n$, and put $v=l_1$.
Assume $R(l)=\infty$.
For every $q\geq1$ and every complex-linear map $\Phi:V_{\C}\to M_q(\C)$, write $I=I_q$ and define
\[
A_{\Phi}(z)=\sum_{n\geq 1}z^n\Phi^{(n)}(l_n).
\]
Then
\begin{equation}\label{eq:exact-isospectrality}
\det(\zeta I-A_{\Phi}(z))
 =\det(\zeta I-z\Phi(v))
\end{equation}
for all $z,\zeta\in\C$.
Thus $A_{\Phi}(z)$ and $z\Phi(v)$ have the same eigenvalue multiset, counted with algebraic multiplicity, and
\[
\tr\bigl(A_{\Phi}(z)^p\bigr)
 =z^p\tr\bigl(\Phi(v)^p\bigr)
\qquad (p\geq1).
\]
\end{corollary}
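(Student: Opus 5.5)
The plan is to read this corollary off from \cref{thm:spectral-growth}, applied to the entire matrix function $A_{\Phi}$, and to obtain its hypotheses from \cref{lem:analytic-evaluation}.

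First, the hypotheses. Fix $q\geq1$ and a complex-linear map $\Phi$.

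\begin{enumerate}[label=\textup{(\alph*)}]
\item By \cref{lem:analytic-evaluation}, under $R(l)=\infty$ the series $A_{\Phi}(z)=\sum_{n\geq1}z^n\Phi^{(n)}(l_n)$ defines an entire $M_q(\C)$-valued function. It has no constant term, so $A_{\Phi}(0)=0$.
\item The same lemma gives the evaluation identity $\e^{A_{\Phi}(z)}=G_{\Phi}(z)$. Together with \cref{eq:G-growth} this yields
\[
\norm{\e^{A_{\Phi}(z)}}\leq \exp\bigl(\abs{z}\,\norm{\Phi}\,L(\gamma)\bigr).
\]
This is \cref{eq:exp-growth} with $C=1$ and $\tau=\norm{\Phi}L(\gamma)$. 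The operator norm used there is the Euclidean operator norm fixed on $M_q(\C)$.
\item The linear Taylor coefficient of $A_{\Phi}$ is $\Phi^{(1)}(l_1)=\Phi(v)$. Hence $A_{\Phi}(z)=z\Phi(v)+O(z^2)$ near $0$, which is the additional hypothesis of \cref{thm:spectral-growth} with $H=\Phi(v)$.
\end{enumerate}

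Second, the conclusion. With these hypotheses, the second part of \cref{thm:spectral-growth} gives \cref{eq:exact-isospectrality} for all $z,\zeta\in\C$. Equal characteristic polynomials mean equal eigenvalue multisets with algebraic multiplicity. The trace formula is then \cref{eq:trace-powers} with $H=\Phi(v)$, since $\tr(A^p)$ is the $p$th power sum of the eigenvalues.

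There is no substantive obstacle; every hypothesis is supplied by earlier lemmas. The only point needing care is the identification of the constants and of the linear term. In particular, $v=l_1$ follows because the degree-one component of $\log S(\gamma)$ equals $S_1(\gamma)$.
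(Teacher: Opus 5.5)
Your proposal is correct and is essentially the paper's own proof. The paper likewise takes entireness and $\norm{\e^{A_{\Phi}(z)}}\leq\exp(\abs{z}\,\norm{\Phi}\,L(\gamma))$ from \cref{lem:analytic-evaluation}, notes $A_{\Phi}(z)=z\Phi(v)+O(z^2)$, and applies \cref{thm:spectral-growth} with $C=1$ and $\tau=\norm{\Phi}L(\gamma)$.
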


\begin{proof}
By \cref{lem:analytic-evaluation},
\[
\e^{A_{\Phi}(z)}=G_{\Phi}(z),
\qquad
\norm{\e^{A_{\Phi}(z)}}
 \leq \exp\!\bigl(\abs{z}\,\norm{\Phi}\,L(\gamma)\bigr).
\]
Also $A_{\Phi}(z)=z\Phi(v)+O(z^2)$.
Apply \cref{thm:spectral-growth} with $C=1$ and $\tau=\norm{\Phi}L(\gamma)$.
\end{proof}

\section{The zero-increment case}\label{sec:zero-increment}

\subsection{Homogeneous skew-Hermitian separation}

Write $\uLie(q)$ for the real Lie algebra of skew-Hermitian $q\times q$ matrices.

\begin{lemma}[Skew-Hermitian separation in one homogeneous degree]\label{lem:skew-separation}
Let $p$ be a nonzero homogeneous real free Lie polynomial of degree $m$ in $d$ generators.
Then there exist $q\geq 1$ and matrices $X_1,\ldots,X_d\in\uLie(q)$ such that
\[
p(X_1,\ldots,X_d)\neq 0.
\]
\end{lemma}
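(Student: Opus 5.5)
Since $p$ is a free Lie polynomial, it lies in the free associative algebra $\R\langle x_1,\ldots,x_d\rangle$ as a nonzero noncommutative polynomial of degree $m$. It therefore suffices to find skew-Hermitian matrices at which this associative polynomial does not vanish. The plan has three steps: obtain faithful matrix representations of the free associative algebra truncated above degree $m$; pass from arbitrary complex matrices to skew-Hermitian ones by a polynomial-identity argument; and use homogeneity so that no cancellation between degrees interferes.

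\textbf{Step 1 (arbitrary matrices).} I would first produce complex matrices $Y_1,\ldots,Y_d\in M_q(\C)$ with $p(Y_1,\ldots,Y_d)\neq0$. The cleanest choice is the regular representation on the truncated tensor algebra $W=\bigoplus_{n\le m}(\C^d)^{\otimes n}$, with $q=\dim W$. Let $Y_i$ be left multiplication by $e_i$, followed by truncation in degrees above $m$. Then
\[
p(Y_1,\ldots,Y_d)\,1=p(e_1,\ldots,e_d)\neq0,
\]
because $p$ has degree exactly $m$ and the truncation leaves degree $m$ untouched.

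\textbf{Step 2 (Zariski density).} Next I would show that skew-Hermitian tuples cannot all annihilate $p$. Fix $q$ and consider the map
\[
F:(M_q(\C))^d\to M_q(\C),\qquad F(Y)=p(Y).
\]
Its entries are complex polynomials in the entries of the $Y_i$. Every $Y\in M_q(\C)$ decomposes uniquely as $Y=X+\mathrm iX'$ with $X,X'\in\uLie(q)$, so $M_q(\C)=\uLie(q)\otimes_{\R}\C$. In particular $\uLie(q)$ is a real form of $M_q(\C)$, and a complex polynomial that vanishes on the real form vanishes identically. Concretely, write each entry of $F$ as a complex polynomial in real coordinates $s_j$ on $\uLie(q)^d$, using $Y_i=\sum_j s_jB_{ij}$ for a real basis of $\uLie(q)$ which is also a complex basis of $M_q(\C)$. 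If this polynomial vanished for all real $s$, it would vanish for all complex $s$. That would give $F\equiv0$ on $(M_q(\C))^d$, contradicting Step 1. Hence some $X_i\in\uLie(q)$ satisfy $p(X_1,\ldots,X_d)\neq0$.

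\textbf{Main obstacle.} The delicate point is Step 1: ensuring that the evaluation of the Lie element is genuinely nonzero, rather than that of some other polynomial. Viewing $p$ inside the tensor algebra, where Lie polynomials embed injectively by Poincaré--Birkhoff--Witt, and using the truncated regular representation handles this. The density argument in Step 2 is routine. The real coefficients of $p$ cause no difficulty.
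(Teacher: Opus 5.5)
Your proposal is correct and follows essentially the same route as the paper. The paper also evaluates $p$ on truncated left-multiplication operators on the span of words of length at most $m$ and applies the result to the empty word to get $p\neq0$; it then uses a real basis of $\uLie(q)$ that is also a complex basis of $M_q(\C)$, so the nonzero complex polynomial matrix entry cannot vanish at every real coordinate tuple. The only difference is that you build the representation over $\C$ rather than $\R$, which changes nothing.
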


\begin{proof}
Identify the free Lie algebra with its canonical Lie subalgebra of the free associative algebra on the $d$ generators, with bracket $[a,b]=ab-ba$.
Thus the nonzero homogeneous element $p$ is also a nonzero linear combination of associative words of length $m$.
Let $E$ be the finite-dimensional real vector space spanned by all words of length at most $m$ in the $d$ generators.
Let $L_i:E\to E$ be left multiplication by the $i$th generator, truncated to zero on words of length $m$.
Acting on the empty word $1$,
\[
p(L_1,\ldots,L_d)1=p,
\]
so this evaluation is nonzero.
Hence some matrix entry of $p(Z_1,\ldots,Z_d)$, with $Z_j$ generic $q\times q$ matrices and $q=\dim E$, is a nonzero complex polynomial in the matrix entries.

Choose a real basis $H_1,\ldots,H_{q^2}$ of $\uLie(q)$.
The decomposition
\[
M_q(\C)=\uLie(q)\oplus i\uLie(q)
\]
as real vector spaces shows that this real basis is also a complex basis of $M_q(\C)$: complex spanning is immediate, and complex linear independence follows by separating real and imaginary parts in $\uLie(q)$.
Expressing each $Z_j$ as $\sum_k z_{jk}H_k$ turns the chosen matrix entry into a nonzero complex polynomial in the scalar variables $z_{jk}$.
A nonzero complex polynomial cannot vanish on all real tuples $z_{jk}\in\R$.
Thus it is nonzero for some tuple of matrices in $\uLie(q)^d$.

Equivalently, if $e_1,\ldots,e_d$ is a basis of a finite-dimensional real vector space $V$ and the tensor $p(e_1,\ldots,e_d)\in V^{\otimes m}$ is nonzero, the assignment $e_j\mapsto X_j$ extends uniquely to a real-linear development $\rho:V\to\uLie(q)$ and satisfies
\[
\rho^{(m)}\bigl(p(e_1,\ldots,e_d)\bigr)=p(X_1,\ldots,X_d)\neq0.
\]
\end{proof}

\subsection{Rigidity of loops}

\begin{theorem}[Entire-loop rigidity]\label{thm:loop-rigidity}
Let $V$ be a finite-dimensional real normed space and let $\gamma:[0,T]\to V$ be a continuous bounded-variation loop.
If $R(\log S(\gamma))=\infty$, then
\[
S(\gamma)=1.
\]
\end{theorem}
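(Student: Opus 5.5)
Since $v=l_1=\gamma_T-\gamma_0=0$, \cref{thm:exact-isospectrality} gives, for every $q\geq1$ and every complex-linear $\Phi:V_{\C}\to M_q(\C)$,
\[
\det\bigl(\zeta I-A_{\Phi}(z)\bigr)=\zeta^q\qquad(z,\zeta\in\C).
\]
So $A_{\Phi}(z)$ is nilpotent for every $z$. The plan is to argue by contradiction. Suppose $S(\gamma)\neq1$, so $l\neq0$, and let $m\geq2$ be the smallest degree with $l_m\neq0$. By Friedrichs' theorem, $l_m$ is a nonzero homogeneous free Lie polynomial of degree $m$ in a basis $e_1,\dots,e_d$ of $V$. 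By \cref{lem:skew-separation} there is a real-linear development $\rho:V\to\uLie(q)$ with $\rho^{(m)}(l_m)\neq0$. Let $\Phi$ be its complex-linear extension.

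The key observation is that, for real $z$ (say $z=t\in\R$), every homogeneous component $l_n$ is a real free Lie polynomial. Therefore $\Phi^{(n)}(l_n)=\rho^{(n)}(l_n)$ is a Lie polynomial in skew-Hermitian matrices, so it lies in $\uLie(q)$. Hence $A_\Phi(t)=\sum_{n\geq m}t^n\rho^{(n)}(l_n)$ is skew-Hermitian for every real $t$, with convergence by $R(l)=\infty$. A skew-Hermitian matrix is normal, and a normal nilpotent matrix is zero. Hence $A_\Phi(t)=0$ for all real $t$. Since $A_\Phi$ is entire, it vanishes identically and all its Taylor coefficients are zero. In particular $\rho^{(m)}(l_m)=0$, which contradicts the choice of $\rho$. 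Therefore $l=0$ and $S(\gamma)=\e^0=1$.

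The bookkeeping to check is that $\Phi^{(n)}(l_n)$ equals the Lie-polynomial evaluation in the matrices $X_j=\rho(e_j)$. This holds because $\Phi^{(n)}$ is multiplicative and sends commutator tensors to matrix commutators. It is also why I do not even need minimality of $m$: any nonzero $l_n$ yields a contradiction in the same way.

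I expect no serious obstacle, since the analytic work was done in \cref{thm:spectral-growth}. The only subtle point is making sure the skew-Hermitian property survives the infinite sum, which holds because $\uLie(q)$ is a closed real subspace. The equivalence with tree-likeness then follows from Hambly--Lyons, as listed in \cref{sec:external}.
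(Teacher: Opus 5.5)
Your proof is correct and follows essentially the same route as the paper: exact isospectrality with $v=0$, the free-Lie identification of $l_m$, and the skew-Hermitian separation lemma. The only difference is the last step: the paper reads off the $z^{2m}$ coefficient $\tr(H^2)=-\tr(H^*H)<0$ of the identically vanishing function $\tr(\rho(\delta_z l)^2)$, whereas you restrict to real $t$, where $A_\Phi(t)$ is skew-Hermitian and nilpotent, hence zero, and then use the identity theorem---which, as you observe, makes minimality of $m$ unnecessary.
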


\begin{proof}
Put $l=\log S(\gamma)$.
Since $l_1=0$, \cref{thm:exact-isospectrality} gives, for every $q\geq1$ and every complex-linear development $\rho:V_{\C}\to M_q(\C)$,
\[
\det(\zeta I_q-\rho(\delta_z l))=\zeta^q,
\qquad
\rho(\delta_z l):=\sum_{n\geq1}z^n\rho^{(n)}(l_n).
\]
Suppose $l\neq 0$, and let $m\geq 2$ be minimal with $l_m\neq 0$.
Choose a basis $e_1,\ldots,e_d$ of $V$. By the primitive/free-Lie identification \cite{Reutenauer1993}, $l_m$ is a nonzero homogeneous real free Lie polynomial in these basis generators. By \cref{lem:skew-separation}, the resulting matrices define a real-linear development $\rho:V\to\uLie(q)$, which we extend complex-linearly to $V_{\C}$, such that
\[
H=\rho^{(m)}(l_m)\neq 0.
\]
For each fixed $z\in\C$, the characteristic polynomial of $\rho(\delta_z l)$ is $\zeta^q$, so all of its eigenvalues $\lambda_1(z),\ldots,\lambda_q(z)$, counted with algebraic multiplicity, are zero. The power-sum identity for a matrix, equivalently the second Newton identity, therefore gives
\[
\tr\!\bigl(\rho(\delta_z l)^2\bigr)
 =\sum_{j=1}^q\lambda_j(z)^2
 =0
\qquad (z\in\C).
\]
But
\[
\rho(\delta_z l)=z^mH+O(z^{m+1}),
\]
so the coefficient of $z^{2m}$ is $\tr(H^2)$.
Since $H$ is nonzero and skew-Hermitian,
\[
\tr(H^2)=-\tr(H^*H)<0,
\]
a contradiction.
Hence $l=0$ and $S(\gamma)=1$.
\end{proof}

\section{Rational Fourier decorations and resonant powers}\label{sec:resonance}

Starting from the exact isospectrality of \cref{thm:exact-isospectrality}, we derive a cyclic identity for the full decorated transverse signature associated with each finite symmetric rational frequency set $\Lambda$. Write $i=\sqrt{-1}$. A diagonal matrix records the longitudinal coordinate $x$, nearest-neighbour matrix units record transverse increments, and a gauge transform produces the Fourier decorations $\e^{i\omega x}\dd X$. Rational resonance makes a suitable concatenation power have trivial development, while one matrix coefficient isolates each prescribed decorated word.

Let $\gamma:[0,T]\to V$ be a continuous bounded-variation path, and put
\[
g=S(\gamma),
\qquad
l=\log g,
\qquad
v=\gamma_T-\gamma_0.
\]
Assume throughout this section that
\[
R(l)=\infty,
\qquad
v\neq0,
\qquad
\dim V\geq2.
\]
The one-dimensional case was disposed of in \cref{rem:one-dimensional}.
Choose $\ell\in V^*$ with
\[
\ell(v)=1,
\]
and put $W=\ker\ell$.
After translating $\gamma_0$ to zero, decompose
\begin{equation}\label{eq:path-decomposition}
\gamma_t=x_t v+X_t,
\qquad
x_t=\ell(\gamma_t),
\qquad
X_t\in W.
\end{equation}
Then
\[
x_0=0,\qquad x_T=1,\qquad X_0=X_T=0.
\]
Whenever $\eta\in W^*$ appears below, it is extended to $V^*$ by $\eta(v)=0$.

\subsection{Decorated transverse paths}

Let $\Lambda\subset 2\pi\Q$ be finite and symmetric: $\Lambda=-\Lambda$.
Give $W$ an auxiliary Euclidean norm and let $W_{\C}$ be its complexification.
Define the finite-dimensional real Hilbert space
\begin{equation}\label{eq:E-Lambda}
E_{\Lambda}
 =\set{(z_{\omega})_{\omega\in\Lambda}\in
        \prod_{\omega\in\Lambda}W_{\C}:
        z_{-\omega}=\overline{z_{\omega}}}
\end{equation}
with the norm inherited from the direct sum.
The decorated transverse path is
\begin{equation}\label{eq:decorated-path}
B_t^{\Lambda}(\gamma)
 =\left(\int_0^t \e^{i\omega x_s}\dd X_s\right)_{\omega\in\Lambda}.
\end{equation}
Each coordinate is a vector-valued Riemann--Stieltjes integral.
The conjugacy relation in \cref{eq:E-Lambda} follows from the reality of $X$.
Since the multipliers have modulus one and $X$ has bounded variation, $B^{\Lambda}(\gamma)$ is continuous and of bounded variation.
Put
\[
h_{\Lambda}=S(B^{\Lambda}(\gamma)).
\]
Define the orthogonal map
\begin{equation}\label{eq:T-Lambda}
T_{\Lambda}:E_{\Lambda}\to E_{\Lambda},
\qquad
(T_{\Lambda}z)_{\omega}=\e^{i\omega}z_{\omega}.
\end{equation}
The same notation denotes the induced graded automorphism of tensor series and signatures.

\subsection{Entire dependence of bounded-variation developments}

\begin{lemma}[Entire parameter dependence]\label{lem:parameter-entire}
Let $q,r\geq1$ and $T_0>0$, write $I=I_q$, let $\xi_1,\ldots,\xi_r:[0,T_0]\to\C$ be continuous bounded-variation paths, and let $K_1,\ldots,K_r:[0,T_0]\to M_q(\C)$ be continuous. Write
\[
\norm{K_j}_{\infty}=\sup_{0\leq s\leq T_0}\norm{K_j(s)}.
\]
For $u=(u_1,\ldots,u_r)\in\C^r$, let $V^u$ solve
\[
\dd V_s^u
 =V_s^u\sum_{j=1}^r u_j K_j(s)\dd\xi_j(s),
\qquad
V_0^u=I.
\]
Then $u\mapsto V_{T_0}^u$ is entire on $\C^r$.
Its Picard series converges absolutely and locally uniformly, and
\begin{equation}\label{eq:parameter-bound}
\norm{V_{T_0}^u}
 \leq
 \exp\!\left(
   \sum_{j=1}^r
   \abs{u_j}\,\norm{K_j}_{\infty}\,
   \Var(\xi_j;[0,T_0])
 \right).
\end{equation}
All mixed derivatives may be computed term by term from the Picard series.
\end{lemma}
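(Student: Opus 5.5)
The plan is to run Picard iteration directly and control it with a one-dimensional Stieltjes bound, in the same way as the proof of \cref{lem:development-signature}. Write $\Xi_u(s)=\sum_{j=1}^r u_jK_j(s)\dd\xi_j(s)$ for the matrix-valued Stieltjes measure on $[0,T_0]$. Define $V^{u,(0)}=I$ and the $n$th Picard term
\[
P_n(u)=\int_{0<s_1<\cdots<s_n<T_0}\Xi_u(s_1)\cdots\Xi_u(s_n).
\]
This is a Riemann--Stieltjes iterated integral, well defined because each $K_j$ is continuous and each $\xi_j$ has bounded variation.

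Expanding the product, $P_n(u)$ is a homogeneous polynomial of degree $n$ in $u$. Its coefficient of $u_{j_1}\cdots u_{j_n}$ is
\[
\int K_{j_1}\dd\xi_{j_1}\cdots K_{j_n}\dd\xi_{j_n},
\]
so each $P_n$ is entire on $\C^r$. To bound it, set $\mu(s)=\sum_j\abs{u_j}\norm{K_j}_\infty\Var(\xi_j;[0,s])$. This is a continuous nondecreasing control, and $\norm{\Xi_u([s,t])}\leq\mu(t)-\mu(s)$ in the sense of Riemann--Stieltjes sums. By submultiplicativity of the operator norm and the standard simplex estimate for a continuous control,
\[
\norm{P_n(u)}\leq\frac{\mu(T_0)^n}{n!}.
\]
Continuity of $\mu$ is what rules out atoms, and it follows from the continuity of the $\xi_j$.

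Summing over $n$, the series $\sum_n P_n(u)$ converges absolutely and uniformly on polydiscs. It is therefore entire, being a locally uniform limit of polynomials. The sum satisfies \cref{eq:parameter-bound}.

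Next, the sum must actually be the solution. I would check that $\sum_n P_n$, taken with variable endpoint $t$, satisfies the integral equation $V_t=I+\int_0^tV_s\,\Xi_u(s)$. This holds because $P_{n+1}(t)=\int_0^tP_n(s)\,\Xi_u(s)$, and the uniform convergence lets the sum be interchanged with the Stieltjes integral. Uniqueness follows from the same estimate: the difference $D$ of two solutions satisfies $\norm{D_t}\leq\sup\norm{D}\,\mu(t)^n/n!$ for every $n$, so $D=0$.

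Finally, term-by-term mixed derivatives follow from the Weierstrass theorem for locally uniformly convergent series of holomorphic functions in several variables, or equivalently from the Cauchy estimates on polydiscs.

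The main obstacle is the simplex estimate $\mu(T_0)^n/n!$ for matrix-valued Riemann--Stieltjes integrals with continuous integrands. I would prove it by induction through the bound $\norm{P_n(t)}\leq\mu(t)^n/n!$, using
\[
\int_0^t\frac{\mu(s)^n}{n!}\dd\mu(s)=\frac{\mu(t)^{n+1}}{(n+1)!},
\]
which is valid because $\mu$ is continuous.
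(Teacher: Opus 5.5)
Your proposal is correct and is essentially the paper's argument: the Picard expansion, the control measure $\lambda_u=\sum_j\abs{u_j}\,\norm{K_j}_\infty\abs{\nu_j}$ (your $\mu$ is its distribution function), the simplex bound $\mu(T_0)^n/n!$ made available by the atomlessness that comes from continuity of the $\xi_j$, and the Weierstrass test for entireness and termwise differentiation. The differences are minor: you derive the factorial bound inductively via $\int_0^t\mu^n\dd\mu=\mu(t)^{n+1}/(n+1)$, whereas the paper splits $[0,T_0]^n$ into $n!$ orderings of equal $\lambda_u^{\otimes n}$-mass, and you also check explicitly that the series solves the equation and is unique, which the paper takes for granted.
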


\begin{proof}
The $n$th Picard term is the sum over words $j_1,\ldots,j_n\in\{1,\ldots,r\}$ of
\[
u_{j_1}\cdots u_{j_n}
\int_{0<s_1<\cdots<s_n<T_0}
K_{j_1}(s_1)\cdots K_{j_n}(s_n)
\dd\xi_{j_1}(s_1)\cdots\dd\xi_{j_n}(s_n).
\]
Let $\nu_j$ be the finite complex Stieltjes measure induced by the complex-valued path $\xi_j$, let $\abs{\nu_j}$ denote its total-variation measure, and define the finite positive measure
\[
\lambda_u
 =\sum_{j=1}^r
   \abs{u_j}\,\norm{K_j}_{\infty}\,\abs{\nu_j}.
\]
Continuity of $\xi_j$ implies that $\abs{\nu_j}$, and hence $\lambda_u$, is atomless.
After taking norms and summing over all words, the total contribution of degree $n$ is bounded by
\[
\lambda_u^{\otimes n}\!\left(
 \set{0<s_1<\cdots<s_n<T_0}
\right)
 =\frac{\lambda_u([0,T_0])^n}{n!}.
\]
The equality follows by partitioning $[0,T_0]^n$ into the $n!$ strict orderings; the diagonals are $\lambda_u^{\otimes n}$-null because $\lambda_u$ is atomless.
Summing over $n$ gives \eqref{eq:parameter-bound}.
The same estimate is locally uniform in $u$, so the Weierstrass test proves entireness and justifies termwise mixed differentiation of the Picard series.
\end{proof}

\subsection{The resonant identity}

Whenever $r\geq1$ is fixed below, $E_{ab}\in M_{r+1}(\C)$ denotes the matrix unit with a $1$ in row $a$ and column $b$, where the indices run from $0$ to $r$. Thus
\[
E_{ab}E_{cd}=\delta_{bc}E_{ad}.
\]
Here $\delta_{bc}$ is the Kronecker delta.

\begin{lemma}[Resonant skew-Hermitian endpoint identity]\label{lem:resonant-endpoint}
Let $\Lambda\subset 2\pi\Q$ be finite and symmetric, and choose $m\geq1$ such that
\[
m\omega\in2\pi\Z
\qquad(\omega\in\Lambda).
\]
Let $\Gamma=\gamma^{*m}$ be the $m$-fold concatenation of $\gamma$, naturally parametrised on $[0,mT]$.
Fix $r\geq1$, frequencies $\omega_1,\ldots,\omega_r\in\Lambda$, and real covectors $\eta_1,\ldots,\eta_r\in W^*$. No distinctness is assumed: the frequencies and covectors may repeat.
Choose real numbers $\sigma_0,\ldots,\sigma_r$ satisfying
\begin{equation}\label{eq:sigma-resonance}
\sigma_{j-1}-\sigma_j=\omega_j
\quad(1\leq j\leq r),
\qquad
m\sigma_j\in2\pi\Z
\quad(0\leq j\leq r).
\end{equation}
Set $D=i\diag(\sigma_0,\ldots,\sigma_r)$, write $I=I_{r+1}$, and, for $u=(u_1,\ldots,u_r)\in\R^r$, define
\begin{equation}\label{eq:rho-u}
\rho_u(a)
 =\ell(a)D
  +\sum_{j=1}^r
   u_j\eta_j(a)(E_{j-1,j}-E_{j,j-1}),
\end{equation}
first as a real-linear map $V\to\uLie(r+1)$. Denote its complex-linear extension by
\[
\rho_{u,\C}:V_{\C}\longrightarrow M_{r+1}(\C).
\]
If $U^u$ is the right development
\[
\dd U_s^u=U_s^u\rho_u(\dd\Gamma_s),
\qquad U_0^u=I,
\]
then
\begin{equation}\label{eq:eAu-identity}
U_{mT}^u=I
\qquad(u\in\R^r).
\end{equation}
\end{lemma}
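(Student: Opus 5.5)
The plan is to reduce \eqref{eq:eAu-identity} to a spectral statement about the single-period development, via Chen multiplicativity, and then to close using the exact isospectrality of \cref{thm:exact-isospectrality}, with unitarity supplying the diagonalisability that isospectrality alone does not give.

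\textbf{Step 1: reduction to one period.} For fixed $u\in\R^r$, let $U^u_\gamma$ denote the right development of $\gamma$ itself on $[0,T]$, with $\Phi=\rho_{u,\C}$. By \cref{lem:development-signature} with $z=1$, $U^u_\gamma(T)=\sum_n\rho_{u,\C}^{(n)}(S_n(\gamma))$. Developments are multiplicative under concatenation: the solution restarted at the end of each copy of $\gamma$ is left-multiplied by the previous endpoint, which is the matrix form of Chen's identity. Translation does not affect the increments $\dd\Gamma$, so $U^u_{mT}=\bigl(U^u_\gamma(T)\bigr)^m$. It therefore suffices to show that $M:=U^u_\gamma(T)$ satisfies $M^m=I$.

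\textbf{Step 2: the spectrum of $M$.} Since $R(l)=\infty$, \cref{lem:development-signature} gives $M=\e^{A(1)}$ with $A(z)=\sum_{n\ge1}z^n\rho_{u,\C}^{(n)}(l_n)$. \Cref{thm:exact-isospectrality} at $z=1$ shows that $A(1)$ has the same characteristic polynomial as $\rho_{u,\C}(v)$. Since $\ell(v)=1$ and each $\eta_j$ is extended by $\eta_j(v)=0$, we have $\rho_{u,\C}(v)=D=i\diag(\sigma_0,\ldots,\sigma_r)$. Hence $\spec A(1)=\{i\sigma_0,\ldots,i\sigma_r\}$ with multiplicities. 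By the holomorphic spectral mapping theorem, $\spec M=\{\e^{i\sigma_j}\}$, and so $\spec M^m=\{\e^{im\sigma_j}\}=\{1\}$ by \eqref{eq:sigma-resonance}.

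\textbf{Step 3: diagonalisability (the main obstacle).} Knowing the spectrum of $M^m$ is not enough, since a nontrivial unipotent matrix also has spectrum $\{1\}. I will therefore avoid any Jordan analysis of $A(1)$ and instead use unitarity. Because $u$ is real and $\rho_u$ maps $V$ into $\uLie(r+1)$, the development $U^u_\gamma$ stays in the unitary group. The quickest verification is to approximate $\gamma$ by piecewise-linear paths with convergent developments: on each linear piece the endpoint is the exponential of a skew-Hermitian matrix, hence unitary, and the unitary group is closed. Alternatively, the Stieltjes product rule gives
\[
\dd(UU^*)=U\bigl(\rho_u(\dd\gamma)+\rho_u(\dd\gamma)^*\bigr)U^*=0,
\]
which is legitimate because $\gamma$ is continuous, so no atom corrections arise. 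Thus $M^m$ is unitary, hence normal and unitarily diagonalisable. All its eigenvalues equal $1$, so $M^m=I$, which is \eqref{eq:eAu-identity}. The only delicate points are the justification of unitarity for bounded-variation drivers and keeping the right-development ordering consistent in the concatenation step; the spectral input is supplied entirely by \cref{thm:exact-isospectrality}.
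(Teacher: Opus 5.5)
Your proof is correct and is essentially the paper's argument: exact isospectrality at $z=1$ pins the spectrum to $\{i\sigma_j\}$, and normality coming from the skew-Hermitian structure of $\rho_u$ turns this spectral information into $U^u_{mT}=I$. The only difference is packaging. The paper applies \cref{thm:exact-isospectrality} directly to $\Gamma$ via $\log S(\Gamma)=ml$ and diagonalises the skew-Hermitian logarithm $mA(1)$. You instead take the $m$th power of the one-period development and diagonalise the unitary matrix $M^m=\e^{mA(1)}$, which amounts to the same thing.
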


\begin{proof}
The choice in \cref{eq:sigma-resonance} is always possible: take $\sigma_r=0$ and define recursively $\sigma_{j-1}=\sigma_j+\omega_j$. If $m\sigma_j\in2\pi\Z$, then
\[
m\sigma_{j-1}=m\sigma_j+m\omega_j\in2\pi\Z,
\]
so the resonance condition propagates backwards.

Since $g=\e^l$ in the complete graded algebra and all $m$ factors coincide,
\[
S(\Gamma)=g^m=(\e^l)^m=\e^{ml}.
\]
Here the formal exponential on the augmentation ideal and the formal logarithm on elements with constant term $1$ are mutually inverse. Hence
\[
\log S(\Gamma)=ml,
\]
so the logarithmic signature of $\Gamma$ is entire and its first level is $mv$.
The evaluated logarithm
\[
A(u)=\sum_{n\geq1}\rho_{u,\C}^{(n)}(m l_n)
\]
converges absolutely in operator norm, where $\rho_{u,\C}^{(n)}$ denotes the multiplicative extension to $V_{\C}^{\otimes n}$. Indeed, $l$ is entire and $\rho_{u,\C}$ is a bounded linear map on the finite-dimensional space $V_{\C}$. For real $u$, every partial sum is skew-Hermitian: each $m l_n$ is a real Lie polynomial and the matrices in \cref{eq:rho-u} are skew-Hermitian. Since $\uLie(r+1)$ is closed in operator norm, its limit satisfies $A(u)\in\uLie(r+1)$.

Apply \cref{thm:exact-isospectrality} to $\Gamma$ at $z=1$ with the complex-linear map $\rho_{u,\C}$. Since $\rho_{u,\C}(mv)=mD$,
\[
\det(\zeta I-A(u))=\det(\zeta I-mD).
\]
Equality of characteristic polynomials gives the same eigenvalue multiset, counted with algebraic multiplicity. By \cref{eq:sigma-resonance}, the eigenvalues of $mD$ lie in $2\pi i\Z$. Since $A(u)$ is skew-Hermitian, it is unitarily diagonalizable. Its eigenvalues therefore lie in $2\pi i\Z$, and hence $\exp A(u)=I$. Finally, \cref{lem:development-signature} identifies $U_{mT}^u$ with $\exp A(u)$, proving \cref{eq:eAu-identity}.
\end{proof}

\begin{lemma}[Continuous-BV gauge identity]\label{lem:resonant-gauge}
Under the hypotheses and notation of \cref{lem:resonant-endpoint}, write
\[
\Gamma_s=x_s v+X_s,
\qquad 0\leq s\leq mT,
\]
so that $x_0=0$ and $x_{mT}=m$, and set
\[
V_s^u=U_s^u\e^{-x_sD}.
\]
Then $V_0^u=V_{mT}^u=I$ and
\begin{equation}\label{eq:gauged-equation}
\dd V_s^u
 =V_s^u\sum_{j=1}^r u_j
 \left(
   \e^{i\omega_jx_s}E_{j-1,j}
   -\e^{-i\omega_jx_s}E_{j,j-1}
 \right)
 \eta_j(\dd X_s).
\end{equation}
\end{lemma}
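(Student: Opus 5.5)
The plan is a direct product-rule computation for the Stieltjes equation, followed by a conjugation identity for the matrix units.

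\textbf{Endpoint values.} At $s=0$ we have $x_0=0$, so $V_0^u=U_0^u=I$. At $s=mT$, \cref{lem:resonant-endpoint} gives $U_{mT}^u=I$. Also $x_{mT}=m$ and $\e^{-mD}=\diag(\e^{-im\sigma_j})=I$ by the resonance condition $m\sigma_j\in2\pi\Z$ in \cref{eq:sigma-resonance}. Hence $V_{mT}^u=I$.

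\textbf{The differential equation.} Since $x$ is a continuous bounded-variation scalar path and $D$ is diagonal, $s\mapsto\e^{-x_sD}$ is continuous, of bounded variation, and satisfies $\dd\e^{-x_sD}=-\e^{-x_sD}D\,\dd x_s$. The chain rule for Stieltjes integrals holds here because $x$ is continuous. Both factors are continuous BV, so the integration-by-parts formula holds with no jump terms, and
\[
\dd V_s^u=U_s^u\rho_u(\dd\Gamma_s)\e^{-x_sD}-U_s^u\e^{-x_sD}D\,\dd x_s .
\]
Using $\ell(\dd\Gamma_s)=\dd x_s$ and $\eta_j(\dd\Gamma_s)=\eta_j(\dd X_s)$ (recall $\eta_j(v)=0$ and $\ell(X)=0$), we get
\[
\rho_u(\dd\Gamma_s)=D\,\dd x_s+\sum_j u_j(E_{j-1,j}-E_{j,j-1})\eta_j(\dd X_s).
\]
The $D\,\dd x_s$ term commutes with $\e^{-x_sD}$ and therefore cancels the second term. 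Writing $U_s^u=V_s^u\e^{x_sD}$, what remains is
\[
\dd V_s^u=V_s^u\sum_j u_j\,\e^{x_sD}(E_{j-1,j}-E_{j,j-1})\e^{-x_sD}\,\eta_j(\dd X_s).
\]

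\textbf{Conjugation of matrix units.} For diagonal $D$ we have $\e^{xD}E_{ab}\e^{-xD}=\e^{ix(\sigma_a-\sigma_b)}E_{ab}$. Since $\sigma_{j-1}-\sigma_j=\omega_j$, this yields the phases $\e^{\pm i\omega_jx_s}$ and hence \cref{eq:gauged-equation}.

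\textbf{Main obstacle.} The only delicate point is rigor in the bounded-variation setting. The product rule and chain rule must be justified for continuous BV integrators; one can do this by Riemann--Stieltjes sums, or by noting that both sides solve the same linear Stieltjes equation and invoking the Picard uniqueness of \cref{lem:parameter-entire}. Continuity rules out atoms, so no correction terms appear.
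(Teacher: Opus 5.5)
Your proposal is correct and matches the paper's proof essentially step for step. The shared steps are the continuous-BV product and chain rules, the cancellation of the $D\,\dd x_s$ terms because $D$ commutes with $\e^{-x_sD}$, the phase identity $\e^{xD}E_{ab}\e^{-xD}=\e^{ix(\sigma_a-\sigma_b)}E_{ab}$ combined with $\sigma_{j-1}-\sigma_j=\omega_j$, and the endpoint $V_{mT}^u=U_{mT}^u\e^{-mD}=I$ from \cref{lem:resonant-endpoint} and $m\sigma_j\in2\pi\Z$. Your Picard-uniqueness alternative would still need a product rule to check that the gauged solution times $\e^{x_sD}$ solves the original equation, so the direct Riemann--Stieltjes justification is the one to rely on.
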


\begin{proof}
For a continuous real bounded-variation path $x$, the matrix chain rule gives
\[
\dd(\e^{-x_sD})=-D\e^{-x_sD}\dd x_s.
\]
The continuous-BV product rule therefore yields
\begin{align*}
\dd V_s^u
&=(\dd U_s^u)\e^{-x_sD}+U_s^u\dd(\e^{-x_sD})\\
&=U_s^u\left(D\dd x_s+
 \sum_{j=1}^r u_j\eta_j(\dd X_s)(E_{j-1,j}-E_{j,j-1})\right)\e^{-x_sD}
 -U_s^uD\e^{-x_sD}\dd x_s.
\end{align*}
Because $D$ commutes with $\e^{-x_sD}$, the two terms containing $D\dd x_s$ cancel. Writing $U_s^u=V_s^u\e^{x_sD}$ leaves conjugation of the off-diagonal matrix units. For every $E_{ab}$,
\[
\e^{xD}E_{ab}\e^{-xD}
 =\e^{ix(\sigma_a-\sigma_b)}E_{ab}.
\]
Together with $\sigma_{j-1}-\sigma_j=\omega_j$, this gives exactly \cref{eq:gauged-equation}. The initial value is clear. At the endpoint, \cref{lem:resonant-endpoint} and $m\sigma_j\in2\pi\Z$ give
\[
V_{mT}^u=U_{mT}^u\e^{-mD}=I.
\]
\end{proof}

\begin{lemma}[Chain-coefficient extraction]\label{lem:chain-coefficient}
Under the hypotheses and notation of \cref{lem:resonant-endpoint},
\begin{equation}\label{eq:resonant-integral}
\int_{0<\tau_1<\cdots<\tau_r<mT}
\prod_{j=1}^r
\e^{i\omega_jx_{\tau_j}}
\eta_j(\dd X_{\tau_j})=0.
\end{equation}
\end{lemma}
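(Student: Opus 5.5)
Since \cref{lem:resonant-gauge} gives $V^u_{mT}=I$ for every real $u$, the plan is to extract the coefficient of the monomial $u_1u_2\cdots u_r$ in the $(0,r)$ matrix entry of $V^u_{mT}$ and show that this coefficient equals the integral in \cref{eq:resonant-integral}.

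\textbf{Step 1: entire dependence on $u$.} I will apply \cref{lem:parameter-entire} to the gauged equation \cref{eq:gauged-equation}, with:
\begin{itemize}
\item $\xi_j=\eta_j(X)$, a continuous real bounded-variation path;
\item $K_j(s)=\e^{i\omega_jx_s}E_{j-1,j}-\e^{-i\omega_jx_s}E_{j,j-1}$, continuous because $x$ is continuous.
\end{itemize}
This makes $u\mapsto V^u_{mT}$ entire on $\C^r$, with Picard series convergent and differentiable term by term. Since $V^u_{mT}=I$ for all $u\in\R^r$, and an entire function on $\C^r$ vanishing on $\R^r$ vanishes identically, every Taylor coefficient of $V^u_{mT}-I$ is zero. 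In particular, for $r\geq1$ the coefficient of $u_1\cdots u_r$ in the entry $(V^u_{mT})_{0r}$ vanishes.

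\textbf{Step 2: computing that coefficient.} The degree-$n$ Picard term is a sum over words $(j_1,\ldots,j_n)$ of $u_{j_1}\cdots u_{j_n}$ times an iterated integral of products $K_{j_1}(s_1)\cdots K_{j_n}(s_n)$. The monomial $u_1\cdots u_r$ arises only from $n=r$ and from words that are permutations of $(1,\ldots,r)$. Each $K_j$ is a combination of $E_{j-1,j}$ and $E_{j,j-1}$, so a product of matrix units $E_{a_1b_1}\cdots E_{a_rb_r}$ is nonzero only when $b_k=a_{k+1}$, and its $(0,r)$ entry requires $a_1=0$ and $b_r=r$. In the nearest-neighbour graph on $\{0,\ldots,r\}$, a walk of $r$ steps from $0$ to $r$ that uses each edge $\{j-1,j\}$ exactly once must be the monotone walk $0\to1\to\cdots\to r$. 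This walk uses only the upper-diagonal units, in the order $j_k=k$.

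The coefficient is therefore
\[
\int_{0<s_1<\cdots<s_r<mT}\prod_{j=1}^r\e^{i\omega_jx_{s_j}}\,\eta_j(\dd X_{s_j}),
\]
which is the integral in \cref{eq:resonant-integral}. Fubini's theorem for the atomless product Stieltjes measures, as used in the proof of \cref{lem:parameter-entire}, justifies writing the Picard term in this form.

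\textbf{Main obstacle.} The delicate point is the combinatorial uniqueness claim in Step 2. If frequencies or covectors repeat, different words $(j_1,\ldots,j_r)$ still carry distinct parameters $u_j$, so the monomial $u_1\cdots u_r$ still isolates each $j$ exactly once. The only remaining check is that the walk argument forces the order $j_k=k$. It does: starting at $0$, the only available edge is $\{0,1\}$; after it is used, the only unused edge incident to the current vertex is the next one, and so on up to $r$.
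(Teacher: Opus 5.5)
Your proposal is correct and follows the paper's proof essentially step by step. You use \cref{lem:parameter-entire} for entireness, pass from $V^u_{mT}=I$ on $\R^r$ to the vanishing of all Taylor coefficients, and isolate the square-free coefficient of the $(0,r)$ entry through the nearest-neighbour walk.

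The only differences are cosmetic. The paper forces the monotone walk by a displacement count ($r$ steps of $\pm1$ from $0$ to $r$ must all be upward), whereas you use the each-edge-used-once argument. The paper also spells out the passage from $\R^r$ to $\C^r$ by successive one-variable identity theorems.
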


\begin{proof}
For complex $u=(u_1,\ldots,u_r)\in\C^r$, let $V^u$ denote the unique solution of the complexified equation \cref{eq:gauged-equation}, with the same Stieltjes integrators and matrix coefficients. By \cref{lem:parameter-entire}, $u\mapsto V^u_{mT}$ is entire on $\C^r$, and its Picard series converges absolutely and locally uniformly. Put
\[
K_j(s)=
 \e^{i\omega_jx_s}E_{j-1,j}
 -\e^{-i\omega_jx_s}E_{j,j-1}.
\]
Then
\begin{equation}\label{eq:gauged-picard}
V_{mT}^u
 =I+\sum_{n\geq1}\ 
  \sum_{j_1,\ldots,j_n=1}^r
  u_{j_1}\cdots u_{j_n}
  \int_{0<\tau_1<\cdots<\tau_n<mT}
  K_{j_1}(\tau_1)\cdots K_{j_n}(\tau_n)
  \prod_{k=1}^n\eta_{j_k}(\dd X_{\tau_k}).
\end{equation}
The $n$th Picard term is homogeneous of total degree $n$ in $u$. For a power series in $u$, write $[u_1\cdots u_r]$ for the coefficient of the square-free monomial $u_1\cdots u_r$, and let $\mathfrak S_r$ denote the permutation group of $\{1,\ldots,r\}$. Hence only the degree-$r$ Picard term can contribute, and
\begin{align}
&[u_1\cdots u_r]\,(V_{mT}^u)_{0r} \notag\\
&\quad =\sum_{\pi\in\mathfrak{S}_r}
 \int_{0<\tau_1<\cdots<\tau_r<mT}
 \bigl(K_{\pi(1)}(\tau_1)\cdots K_{\pi(r)}(\tau_r)\bigr)_{0r}
 \prod_{k=1}^r\eta_{\pi(k)}(\dd X_{\tau_k}).
\label{eq:permutation-coefficient}
\end{align}
A nonzero $(0,r)$ entry in a product of $r$ adjacent off-diagonal matrix units corresponds to an index path starting at $0$, ending at $r$, and moving by $\pm1$ at every factor. Its total displacement is $r$, so every step must be upward. Hence the factors must be, in order,
\[
E_{0,1},E_{1,2},\ldots,E_{r-1,r},
\]
and the only surviving term in \cref{eq:permutation-coefficient} is $\pi=\id$, with the upper-triangular summand selected from each $K_j$. Since
\[
E_{0,1}E_{1,2}\cdots E_{r-1,r}=E_{0,r},
\]
the coefficient is precisely the integral in \cref{eq:resonant-integral}, with no sign or combinatorial factor.

By \cref{lem:resonant-gauge}, $V_{mT}^u=I$ for every real $u$. Since $u\mapsto V_{mT}^u$ is entire on $\C^r$, no identity theorem on the non-open set $\R^r$ is needed: fix $u_2,\ldots,u_r\in\R$ and apply the one-variable identity theorem in $u_1$, then repeat successively in $u_2,\ldots,u_r$. This proves $V_{mT}^u=I$ on all of $\C^r$. Hence every Taylor coefficient of $V_{mT}^u-I$ vanishes. For the square-free monomial, $\partial_{u_1}\cdots\partial_{u_r}$ at the origin equals its coefficient, so \cref{eq:resonant-integral} follows.
\end{proof}

\medskip
\noindent\textbf{The extraction when $r=2$.}
Writing
\[
K_1(s)=a_1(s)E_{0,1}-b_1(s)E_{1,0},
\qquad
K_2(s)=a_2(s)E_{1,2}-b_2(s)E_{2,1},
\]
with $a_j(s)=\e^{i\omega_jx_s}$ and $b_j(s)=\e^{-i\omega_jx_s}$, the coefficient of $u_1u_2$ in the $(0,2)$ entry of the degree-two Picard term is
\[
\int_{0<\tau_1<\tau_2<mT}
  \bigl(K_1(\tau_1)K_2(\tau_2)\bigr)_{0,2}
  \eta_1(\dd X_{\tau_1})\eta_2(\dd X_{\tau_2}).
\]
Indeed, $E_{0,1}E_{1,2}=E_{0,2}$, whereas the reversed order $K_2(\tau_1)K_1(\tau_2)$ has zero $(0,2)$ entry, and every term containing a downward matrix unit also has zero $(0,2)$ entry. Thus the displayed coefficient is exactly
\[
\int_{0<\tau_1<\tau_2<mT}
\e^{i\omega_1x_{\tau_1}}\e^{i\omega_2x_{\tau_2}}
\eta_1(\dd X_{\tau_1})\eta_2(\dd X_{\tau_2}),
\]
with neither a sign nor a factor of $2$.

\begin{lemma}[Coordinate separation for decorated spaces]\label{lem:decorated-coordinate-separation}
Let $\Lambda\subset 2\pi\Q$ be finite and symmetric. For $\eta\in W^*$, let $\eta_{\C}:W_{\C}\to\C$ denote its complex-linear extension. For $\omega\in\Lambda$, define
\[
\zeta_{\omega,\eta}(z)=\eta_{\C}(z_{\omega}),
\qquad z\in E_{\Lambda},
\]
and denote by the same symbol its unique complex-linear extension to $E_{\Lambda}\otimes_{\R}\C$. Then
\[
\operatorname{span}_{\C}
\set{\zeta_{\omega,\eta}:\omega\in\Lambda,\ \eta\in W^*}
 =\bigl(E_{\Lambda}\otimes_{\R}\C\bigr)^*.
\]
Consequently, for every $r\geq1$, the tensor products
\[
\zeta_{\omega_1,\eta_1}\otimes\cdots\otimes
\zeta_{\omega_r,\eta_r}
\]
with $\omega_j\in\Lambda$ and $\eta_j\in W^*$ separate $E_{\Lambda}^{\otimes r}$.
\end{lemma}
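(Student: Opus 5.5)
The plan is to treat $E_{\Lambda}$ as a real subspace of $\prod_{\omega\in\Lambda}W_{\C}$ and to compute the complex dimension of $E_{\Lambda}\otimes_{\R}\C$. Then we show that the functionals $\zeta_{\omega,\eta}$, extended complex-linearly, span a space of that dimension.

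First, write $\Lambda$ as a disjoint union of $\{0\}\cap\Lambda$ and pairs $\{\omega,-\omega\}$ with $\omega\neq0$. For a pair, the constraint $z_{-\omega}=\overline{z_\omega}$ makes the coordinate $z_\omega\in W_{\C}$ free, and it determines $z_{-\omega}$. The real dimension contributed is therefore $2\dim W$. For $\omega=0$ the constraint forces $z_0\in W$, contributing $\dim W$. Hence $\dim_{\R}E_{\Lambda}=\abs{\Lambda}\dim W$, so $\dim_{\C}(E_{\Lambda}\otimes_{\R}\C)^*=\abs{\Lambda}\dim W$.

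Next, we show the $\zeta_{\omega,\eta}$ span. It suffices to show that they separate points of $E_{\Lambda}\otimes_{\R}\C$; equivalently, that the real-linear maps $\zeta_{\omega,\eta}:E_{\Lambda}\to\C$, with their real and imaginary parts, span $E_{\Lambda}^*$ over $\R$. Complex span of $\{\zeta\}$ on the complexification equals the complexification of the real span of $\{\Re\zeta,\Im\zeta\}$. Indeed, $\Re\zeta_{\omega,\eta}=\tfrac12(\zeta_{\omega,\eta}+\zeta_{-\omega,\eta})$ and $\Im\zeta_{\omega,\eta}=\tfrac1{2i}(\zeta_{\omega,\eta}-\zeta_{-\omega,\eta})$ on $E_\Lambda$, because $\eta_{\C}(\overline{z})=\overline{\eta_{\C}(z)}$ for real $\eta$. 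So these real functionals already lie in the complex span, and their complex-linear extensions are those combinations.

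It then remains to check that the real functionals $\Re\eta_{\C}(z_\omega)$ and $\Im\eta_{\C}(z_\omega)$, over $\omega\in\Lambda$ and $\eta\in W^*$, separate points of $E_\Lambda$. If all of them vanish at $z$, then $\eta_{\C}(z_\omega)=0$ for all $\eta\in W^*$. Since $W^*$ separates $W_{\C}$ after complexification, $z_\omega=0$ for every $\omega$, so $z=0$. A family of real functionals that separates points of a finite-dimensional space spans its dual. Complexifying then gives the first assertion.

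For the tensor consequence, we use the fact that if a set of functionals spans $U^*$ with $U$ finite-dimensional, then their $r$-fold tensor products span $(U^{\otimes r})^*$. We apply this to $U=E_{\Lambda}\otimes\C$, noting that $E_\Lambda^{\otimes r}$ embeds in $U^{\otimes r}$. Elementary tensors of basis elements of $U^*$ form a basis of $(U^{\otimes r})^*$, and each basis element is a complex combination of $\zeta$'s. By multilinearity, every product of basis functionals is a combination of products of $\zeta$'s. A tensor annihilated by all such products is therefore zero.

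The only delicate point is the bookkeeping between real and complex linearity, in particular using $\zeta_{-\omega,\eta}=\overline{\zeta_{\omega,\eta}}$ on $E_\Lambda$ to get real and imaginary parts inside the complex span. The rest is routine linear algebra.
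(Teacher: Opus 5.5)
Your proof is correct and follows essentially the same route as the paper. Both arguments use $\tfrac12(\zeta_{\omega,\eta}+\zeta_{-\omega,\eta})$ and $\tfrac1{2i}(\zeta_{\omega,\eta}-\zeta_{-\omega,\eta})$ to recover the real coordinates of $z_\omega$, then complexify, and then pass to tensor powers through $(E_{\Lambda}^{\otimes_{\R} r})\otimes_{\R}\C\cong(E_{\Lambda}\otimes_{\R}\C)^{\otimes_{\C} r}$. The only differences are that the paper exhibits an explicit real basis via $E_{\Lambda}\cong E_0\oplus\bigoplus_{\omega\in\Lambda_+}W_{\C}$ while you deduce spanning from point separation, and your dimension count is never actually used.
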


\begin{proof}
Choose $\Lambda_+\subset\Lambda\setminus\set{0}$ containing one element from each pair $\set{\omega,-\omega}$. Put
\[
E_0=
\begin{cases}
W, & 0\in\Lambda,\\
\set{0}, & 0\notin\Lambda.
\end{cases}
\]
The real-linear isomorphism
\[
\Psi:E_0\oplus\bigoplus_{\omega\in\Lambda_+}W_{\C}
   \longrightarrow E_{\Lambda}
\]
is given by
\[
\Psi\bigl(z_0,(z_{\omega})_{\omega\in\Lambda_+}\bigr)_0=z_0,
\qquad
\Psi(\cdots)_{\omega}=z_{\omega},
\qquad
\Psi(\cdots)_{-\omega}=\overline{z_{\omega}}
\quad(\omega\in\Lambda_+),
\]
with the zero-frequency coordinate omitted when $0\notin\Lambda$. Thus, as a real vector space,
\[
E_{\Lambda}\cong E_0\oplus\bigoplus_{\omega\in\Lambda_+}W_{\C}.
\]
Let $\eta^1,\ldots,\eta^{\dim W}$ be a real basis of $W^*$.

Work first on the underlying real vector space of a nonzero-frequency block. Write
\[
z_{\omega}=u+iw,
\qquad
z_{-\omega}=u-iw,
\qquad u,w\in W.
\]
Then
\[
\zeta_{\omega,\eta}=\eta(u)+i\eta(w),
\qquad
\zeta_{-\omega,\eta}=\eta(u)-i\eta(w),
\]
so
\[
\eta(u)=\frac{\zeta_{\omega,\eta}+\zeta_{-\omega,\eta}}{2},
\qquad
\eta(w)=\frac{\zeta_{\omega,\eta}-\zeta_{-\omega,\eta}}{2i}.
\]
Thus the functionals with frequencies $\omega$ and $-\omega$ recover the two real coordinate families $\eta^k(u)$ and $\eta^k(w)$, which form a real basis of the dual of the block. After complex-linear extension, they span the complexified dual of that block. At frequency zero, the $\zeta_{0,\eta^k}$ span the complexified dual of the $W$ block. The direct-sum decomposition proves the first assertion.

For the tensor-separation statement, use the canonical complex-linear identification
\[
\bigl(E_{\Lambda}^{\otimes_{\R} r}\bigr)\otimes_{\R}\C
\cong
\bigl(E_{\Lambda}\otimes_{\R}\C\bigr)^{\otimes_{\C} r}.
\]
Tensor products of the spanning family $\{\zeta_{\omega,\eta}\}$ span the dual of the complex tensor power on the right, and hence separate the complexification of $E_{\Lambda}^{\otimes_{\R} r}$. Since the canonical map
\[
E_{\Lambda}^{\otimes_{\R} r}
\longrightarrow
\bigl(E_{\Lambda}^{\otimes_{\R} r}\bigr)\otimes_{\R}\C
\]
is injective, the same tensor products separate the original real tensor space $E_{\Lambda}^{\otimes_{\R} r}$.
\end{proof}

\begin{proposition}[Rational-power resonance]\label{prop:resonance}
Let $\Lambda\subset 2\pi\Q$ be finite and symmetric.
Choose $m\geq 1$ such that
\[
m\omega\in 2\pi\Z
\qquad (\omega\in\Lambda).
\]
Then
\begin{equation}\label{eq:cyclic-identity}
h_{\Lambda}T_{\Lambda}(h_{\Lambda})\cdots
T_{\Lambda}^{m-1}(h_{\Lambda})=1.
\end{equation}
Equivalently, the decorated transverse path associated with the concatenation power $\gamma^{*m}$ has trivial signature.
\end{proposition}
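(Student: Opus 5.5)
The plan is to identify the left-hand side of \cref{eq:cyclic-identity} with the signature of the decorated transverse path of $\Gamma=\gamma^{*m}$. Then show that every coordinate of that signature of degree $r\geq1$ vanishes, using \cref{lem:chain-coefficient} together with the separation statement \cref{lem:decorated-coordinate-separation}.

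\textbf{Step 1 (Chen identity with twists).} Parametrise $\Gamma$ on $[0,mT]$. On the $k$th copy, $0\leq k\leq m-1$, the longitudinal coordinate is $x^\Gamma_s=k+x_{s-kT}$, since $x_T=1$ and translated concatenation adds the endpoint. The transverse part is $X^\Gamma_s=X_{s-kT}$, since $X_T=0$. The decorated increment on the $k$th copy is therefore
\[
\e^{i\omega x^\Gamma_s}\dd X^\Gamma_s=\e^{i\omega k}\,\e^{i\omega x_{s-kT}}\dd X_{s-kT}.
\]
So that piece of $B^\Lambda(\Gamma)$ is the image under $T_\Lambda^k$ of a translate of $B^\Lambda(\gamma)$. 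Since $T_\Lambda^k$ is linear, its signature is $T_\Lambda^k(h_\Lambda)$. Chen's identity then gives
\[
S(B^\Lambda(\Gamma))=h_\Lambda T_\Lambda(h_\Lambda)\cdots T_\Lambda^{m-1}(h_\Lambda).
\]
This also yields the stated equivalence.

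\textbf{Step 2 (vanishing of coordinates).} Fix $r\geq1$, frequencies $\omega_1,\ldots,\omega_r\in\Lambda$ and covectors $\eta_1,\ldots,\eta_r\in W^*$. Pair the degree-$r$ level of $S(B^\Lambda(\Gamma))$ with $\zeta_{\omega_1,\eta_1}\otimes\cdots\otimes\zeta_{\omega_r,\eta_r}$, extended complex-linearly. Because $\zeta_{\omega,\eta}(\dd B_s)=\e^{i\omega x_s}\eta(\dd X_s)$, this pairing is the iterated Stieltjes integral \cref{eq:resonant-integral} for $\Gamma$. \Cref{lem:chain-coefficient} applies to exactly this data, with repetitions allowed and $m$ the chosen resonance order, so the integral is zero. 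By \cref{lem:decorated-coordinate-separation}, these functionals separate $E_\Lambda^{\otimes r}$. Hence the degree-$r$ level vanishes for every $r\geq1$, and the signature equals $1$.

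\textbf{Main obstacle.} The main obstacle is the bookkeeping in Step 1. First, the decomposition $\Gamma_s=x_sv+X_s$ used in \cref{lem:resonant-gauge} must be literally the concatenated coordinates, including the additive shift $k$ in $x$. Second, one must confirm that the induced graded automorphism $T_\Lambda$ acts on signatures of linearly transformed paths as $S(T\beta)=T(S(\beta))$. Both checks are routine: the first follows from $\ell(v)=1$ and $X_T=0$, and the second from linearity of iterated integrals. A smaller point is the pairing between the real tensor power and complexified functionals, which is handled as in the proof of \cref{lem:decorated-coordinate-separation}.
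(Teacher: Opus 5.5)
Your proposal is correct and follows essentially the same route as the paper. The paper also shows $S(B^\Lambda(\gamma^{*m}))=1$ by pairing with $\zeta_{\omega_1,\eta_1}\otimes\cdots\otimes\zeta_{\omega_r,\eta_r}$, applying the chain-coefficient lemma and then coordinate separation. It then identifies the $j$th copy as $T_\Lambda^j B^\Lambda(\gamma)$ via $x_{jT+s}=j+x_s$ and $X_{jT+s}=X_s$, and concludes with Chen's identity; you simply take these two steps in the opposite order, which changes nothing.
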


\begin{proof}
Let $\Gamma=\gamma^{*m}$ and use its global decomposition $\Gamma_s=x_sv+X_s$. Fix an arbitrary tensor level $r\geq1$, frequencies $\omega_1,\ldots,\omega_r\in\Lambda$, and covectors $\eta_1,\ldots,\eta_r\in W^*$. Applying \cref{lem:resonant-endpoint,lem:resonant-gauge,lem:chain-coefficient} gives
\[
\int_{0<\tau_1<\cdots<\tau_r<mT}
\prod_{j=1}^r
\e^{i\omega_jx_{\tau_j}}
\eta_j(\dd X_{\tau_j})=0.
\]
By the definition of $B^{\Lambda}(\Gamma)$, the scalar identity above is exactly
\[
\left\langle
 \zeta_{\omega_1,\eta_1}\otimes\cdots\otimes
 \zeta_{\omega_r,\eta_r},
 S_r(B^{\Lambda}(\Gamma))
\right\rangle=0.
\]
The coordinate-separation statement in \cref{lem:decorated-coordinate-separation} shows that these decomposable functionals separate $E_{\Lambda}^{\otimes r}$. Since the frequencies and covectors were arbitrary,
\[
S_r(B^{\Lambda}(\Gamma))=0.
\]
As $r\geq1$ was arbitrary, every positive tensor level vanishes. Hence
\[
S(B^{\Lambda}(\Gamma))=1.
\]

For $0\leq j\leq m-1$ and $0\leq s\leq T$, the global decomposition on the $j$th copy satisfies
\[
x_{jT+s}=j+x_s,
\qquad
X_{jT+s}=X_s,
\]
where the quantities on the right refer to the original copy of $\gamma$. Hence, in the $\omega$ coordinate,
\[
\e^{i\omega x_{jT+s}}\dd X_{jT+s}
 =\e^{ij\omega}\e^{i\omega x_s}\dd X_s.
\]
After translating the decorated segment to start at zero, it is therefore $T_{\Lambda}^jB^{\Lambda}(\gamma)$. Functoriality of signatures and Chen multiplication give the path-level concatenation identity
\[
S(B^{\Lambda}(\gamma^{*m}))
 =\prod_{j=0}^{m-1}S(T_{\Lambda}^jB^{\Lambda}(\gamma))
 =h_{\Lambda}T_{\Lambda}(h_{\Lambda})\cdots
  T_{\Lambda}^{m-1}(h_{\Lambda}).
\]
The left-hand side is $1$ by the preceding argument, which proves \cref{eq:cyclic-identity}.
\end{proof}

\paragraph{Integral-frequency specialisation and comparison with BGW.}
When $\Lambda\subset2\pi\Z$, one may take $m=1$, so \cref{eq:cyclic-identity} reduces to
\[
S(B^{\Lambda}(\gamma))=h_{\Lambda}=1.
\]
In the planar bounded-variation setting, the nondegenerate logarithmic-signature identities of \cite[Theorem~6.5]{BGW2025} follow from this full decorated-signature identity.

\section{Signature-tree geometry: reduction and factorisation}\label{sec:tree-factorisation}

The complete Le Donne--Z\"ust signature tree is used first to turn the cyclic identity of \cref{prop:resonance} into a fixed point on an endpoint geodesic, and later to turn a factorisation $g=a\e^v a^{-1}$ into the reduced path decomposition. For points $x,y$ in a metric tree, $[x,y]$ denotes their unique geodesic segment; for a nonempty subset $C$, $d(x,C):=\inf_{c\in C}d(x,c)$. For tree-reduced paths, \cref{prop:reducedness,prop:prefix-geodesic} identify the endpoint geodesic with the prefix-signature trace. The standard metric-tree facts required below are collected in \cref{app:metric-tree}.

\subsection{The inverse-limit metric}

Let $E$ be a finite-dimensional real Hilbert space. Define the rectifiable signature group by
\[
\Sl(E)=\set{S(\eta):\eta\text{ is a rectifiable }E\text{-valued path}}.
\]
For each $N\geq1$, let $G_N(E)$ be the free step-$N$ nilpotent Lie group over $E$, equipped with its left-invariant Carnot--Carath\'eodory metric $d_N$ induced by the Hilbert norm on the horizontal layer, and let $1_N$ denote its identity.
Write
\[
\pi_N^{N+1}:G_{N+1}(E)\longrightarrow G_N(E)
\]
for the canonical truncation homomorphism.
Following Le Donne--Z\"ust \cite[Section~2]{LeDonneZust2021}, define the \emph{finite-distance inverse limit}
\[
\begin{aligned}
G_{\infty}(E)
=\Bigl\{(g_N)_{N\geq1}\in\prod_{N\geq1}G_N(E):\;&
\pi_N^{N+1}(g_{N+1})=g_N\quad(N\geq1),\\
&\sup_{N\geq1}d_N(1_N,g_N)<\infty
\Bigr\}.
\end{aligned}
\]
For each $N$, let
\[
\pi_N:G_{\infty}(E)\longrightarrow G_N(E),
\qquad
\pi_N((g_k)_{k\geq1})=g_N,
\]
denote the coordinate projection.
For $g=(g_N)$ and $h=(h_N)$ in $G_{\infty}(E)$, set
\begin{equation}\label{eq:d-infty}
 d_{\infty}(g,h)
 =\sup_{N\geq1}d_N(g_N,h_N).
\end{equation}
This is finite, since
\[
d_N(g_N,h_N)
\leq d_N(g_N,1_N)+d_N(1_N,h_N)
\]
and both suprema on the right are finite by definition. Thus $G_{\infty}(E)$ is not the unrestricted algebraic inverse limit of all compatible truncations; it is the finite-$d_{\infty}$-distance class of the identity.

If $g=S(\eta)$ is a rectifiable signature and $\pi_Ng$ denotes its step-$N$ truncation, then the canonical horizontal lift of $\eta$ to $G_N(E)$ has length $L(\eta)$. Hence
\[
d_N(1_N,\pi_Ng)\leq L(\eta)
\qquad(N\geq1),
\]
so $(\pi_Ng)_{N\geq1}$ belongs to $G_{\infty}(E)$.

\begin{theorem}[Le Donne--Z\"ust]\label{thm:LDZ}
The space $G_{\infty}(E)$ is a group under coordinatewise multiplication and inversion, and $d_{\infty}$ is a finite left-invariant complete metric on it \cite[Lemmas~2.2(2) and~3.1]{LeDonneZust2021}. The truncation map
\[
\iota:\Sl(E)\longrightarrow G_{\infty}(E),
\qquad
\iota(g)=(\pi_Ng)_{N\geq1},
\]
is a group isomorphism \cite[Theorem~4.4]{LeDonneZust2021}. We use $\iota$ to identify $\Sl(E)$ with $G_{\infty}(E)$ and transport $d_{\infty}$ to $\Sl(E)$. Under this identification, $(\Sl(E),d_{\infty})$ is a complete metric tree \cite[Theorem~4.4]{LeDonneZust2021}. In particular, every point of this complete space is the signature of a rectifiable $E$-valued path; no additional non-rectifiable completion points are introduced.
\end{theorem}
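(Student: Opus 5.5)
This theorem is imported from Le Donne--Z\"ust, so the plan is to assemble their results and check that our normalisations agree with theirs, rather than to reprove them. The work splits into three parts: group structure and metric properties of $G_\infty(E)$, the isomorphism $\iota$, and the tree conclusion.

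\emph{Group and metric.} Each truncation $\pi_N^{N+1}$ is a group homomorphism, so coordinatewise products and inverses of compatible sequences are again compatible. For the finite-distance condition, left invariance of each $d_N$ gives
\[
d_N(1_N,g_Nh_N)\leq d_N(1_N,g_N)+d_N(1_N,h_N),
\qquad d_N(1_N,g_N^{-1})=d_N(g_N,1_N).
\]
Thus $G_\infty(E)$ is closed under multiplication and inversion. Left invariance of $d_\infty$ follows by taking the supremum of the left-invariant $d_N$. Finiteness of $d_\infty$ was checked before the statement. For completeness, I would take a $d_\infty$-Cauchy sequence. It is Cauchy in every $G_N(E)$, which is complete, so it converges coordinatewise; the limits are compatible because the truncations are $1$-Lipschitz. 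A uniform bound passes to the limit, so the limit has finite distance from the identity, and the convergence is uniform in $N$. This is Lemma~2.2(2) of the cited paper.

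\emph{The isomorphism $\iota$.} Well-definedness was shown just before the statement: the horizontal lift of $\eta$ has length $L(\eta)$, so $d_N(1_N,\pi_Ng)\leq L(\eta)$ for all $N$. Chen's identity makes $\iota$ a homomorphism. Injectivity is immediate, since a signature is determined by all of its truncations. Surjectivity is the main obstacle. Given $(g_N)$ with $\sup_N d_N(1_N,g_N)\leq L$, I would pick geodesics in each $G_N(E)$ from $1_N$ to $g_N$ and parametrise them at constant speed on $[0,1]$. Their horizontal projections are $L$-Lipschitz paths $\eta^N$ in $E$ starting at $0$. By Arzel\`a--Ascoli a subsequence converges uniformly to an $L$-Lipschitz path $\eta$.

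To finish, one must show $S(\eta)$ truncates to $g_M$ for each fixed $M$. For $N\geq M$, the step-$M$ lift of $\eta^N$ ends at $\pi_M g_N=g_M$. The step-$M$ signature is continuous under uniform convergence with uniformly bounded length (lower semicontinuity of length together with continuity of iterated integrals for Lipschitz paths converging uniformly, the standard rough-path stability in $1$-variation with a weaker $p$-variation topology). Hence $\pi_M S(\eta)=g_M$. This is the step I would verify most carefully, and it is the content of Theorem~4.4 of Le Donne--Z\"ust.

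\emph{Tree structure.} For the metric-tree conclusion I would simply cite Theorem~4.4. Its mechanism is as follows: geodesics are given by signatures of reduced prefixes, and uniqueness of geodesics and the $0$-hyperbolicity come from Hambly--Lyons uniqueness of reduced representatives, since a loop in the space would produce a non-tree-like path with trivial signature. Because $\iota$ is surjective onto the complete space $G_\infty(E)$, every point is a rectifiable signature, and so no completion points are added.
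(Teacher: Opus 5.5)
Your proposal is correct and takes the same approach as the paper. The paper does not reprove this result either: it imports it from Le Donne--Z\"ust (Lemmas~2.2(2), 3.1 and Theorem~4.4), after checking only finiteness of $d_\infty$ and the bound $d_N(1_N,\pi_N g)\leq L(\eta)$. Your added verifications are sound: closure under products and inverses, completeness via Cauchy limits uniform in $N$, and surjectivity via constant-speed geodesics, Arzel\`a--Ascoli, and continuity of truncated signatures under uniform convergence with bounded length. The tree property rests, as in the paper, on the citation of Theorem~4.4; your description of its mechanism is a heuristic, not an argument.
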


Under the identification in \cref{thm:LDZ}, we write $\pi_Ng=g_N$ for the $N$th truncation of $g\in\Sl(E)$.
For $x,y\in\Sl(E)$, the notation $[x,y]$ denotes the unique geodesic segment joining them in the signature tree.

Only left translations are used as metric isometries below; no continuity or isometric property of right translations is assumed.

The next two functorial properties follow directly from the definition.

\begin{lemma}[Orthogonal functoriality and prefix continuity]\label{lem:prefix-continuity}
Let $U:E\to E$ be orthogonal.
\begin{enumerate}[label=\textup{(\alph*)}]
\item The induced compatible family of graded automorphisms preserves $G_{\infty}(E)$; under the identification in \cref{thm:LDZ}, the corresponding signature automorphism $U_*$ is an isometry of $(\Sl(E),d_{\infty})$.
\item If $\beta:[0,T]\to E$ is rectifiable and
\[
r_t=S(\beta|_{[0,t]}),
\]
then $t\mapsto r_t$ is continuous in $d_{\infty}$, and for $s<t$,
\begin{equation}\label{eq:prefix-bound}
d_{\infty}(r_s,r_t)
 \leq L(\beta|_{[s,t]}).
\end{equation}
\end{enumerate}
\end{lemma}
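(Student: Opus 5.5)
For part (a), I would work level by level. The orthogonal map $U$ extends to a graded algebra automorphism $U^{\otimes n}$ on each tensor level. It commutes with the truncations $\pi_N^{N+1}$, so it induces a group automorphism $U_N$ of $G_N(E)$. This automorphism is the Lie group map integrating the graded Lie algebra automorphism of the free step-$N$ nilpotent Lie algebra determined by $U$ on the first layer.

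Because $U$ is orthogonal, $U_N$ maps horizontal curves to horizontal curves. If $\eta$ is a horizontal curve in $G_N(E)$ with projection $\eta_1$ to the first layer, then $U_N\circ\eta$ has projection $U\eta_1$, which has the same length. Hence $U_N$ preserves $d_N$-lengths of horizontal curves. Since $U^{-1}$ is also orthogonal, the same holds for $U_N^{-1}$, so $U_N$ is an isometry of $(G_N(E),d_N)$ fixing $1_N$.

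It follows that $U$ acts coordinatewise on compatible sequences, preserves compatibility, and preserves $\sup_N d_N(1_N,g_N)$. Therefore it preserves $G_\infty(E)$, and taking the supremum over $N$ shows it preserves $d_\infty$. To see that this action corresponds to $U_*$ on $\Sl(E)$, I would check that $S(U\beta)=U_*S(\beta)$ levelwise by linearity of the iterated integrals, and then transport this through $\iota$.

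For part (b), I would use Chen's identity, $r_t=r_s\,S(\beta|_{[s,t]})$. By left invariance of each $d_N$,
\[
d_N(\pi_Nr_s,\pi_Nr_t)=d_N\bigl(1_N,\pi_NS(\beta|_{[s,t]})\bigr).
\]
The right-hand side is at most the length of the canonical horizontal lift of $\beta|_{[s,t]}$, which is $L(\beta|_{[s,t]})$, as noted just before \cref{thm:LDZ}. Taking the supremum over $N$ gives \eqref{eq:prefix-bound}.

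Continuity of $t\mapsto r_t$ then follows because $t\mapsto L(\beta|_{[0,t]})$ is continuous for a continuous rectifiable path, so $L(\beta|_{[s,t]})\to0$ as $|t-s|\to0$.

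I expect the main obstacle to be the verification in (a) that the induced map really is a $d_N$-isometry. This requires checking that the lift $U_N$ maps the horizontal distribution to itself with the correct norm. I would handle it through the explicit description of horizontal curves as canonical lifts of their first-level projections: the signature of $U\eta_1$ is $U_*$ applied to the signature of $\eta_1$, so lifts are mapped to lifts of the same length.
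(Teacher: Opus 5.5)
Your proposal is correct and follows essentially the same route as the paper. For (a), you show that the induced automorphism is a levelwise $d_N$-isometry fixing $1_N$ because horizontal lengths are preserved; you also make explicit two points the paper leaves implicit, namely the inverse argument via $U^{-1}$ and the identity $S(U\beta)=U_*S(\beta)$. For (b), you use Chen's identity, left invariance, the canonical-lift length bound and a supremum over $N$, with continuity coming from the variation function, exactly as the paper does.
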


\begin{proof}
For each $N$, the graded automorphism $U_{*,N}$ induced by $U$ sends horizontal curves in $G_N(E)$ to horizontal curves of the same length, commutes with truncation, and fixes $1_N$. It is therefore an isometry for $d_N$, and
\[
d_N\bigl(1_N,U_{*,N}(g_N)\bigr)=d_N(1_N,g_N).
\]
Hence the induced compatible family preserves the defining finite-distance condition for $G_{\infty}(E)$. Taking the supremum over $N$ proves part~\textup{(a)}.

For part (b), Chen's identity and left invariance give
\[
d_{\infty}(r_s,r_t)
 =d_{\infty}\!\left(1,S(\beta|_{[s,t]})\right).
\]
At every truncation level, the canonical horizontal lift of $\beta|_{[s,t]}$ has length $L(\beta|_{[s,t]})$, so its endpoint distance is no larger.
Taking the supremum over $N$ proves \cref{eq:prefix-bound}.
Continuity follows from continuity of the variation function of a continuous bounded-variation path.
\end{proof}

\begin{proposition}[Reducedness and the prefix-signature lift]\label{prop:reducedness}
Let $E$ be a finite-dimensional real normed vector space and let $\eta:[0,T]\to E$ be a continuous bounded-variation path. Put
\[
q(t)=\Var(\eta;[0,t]),
\qquad L=q(T),
\]
and let $\bar\eta:[0,L]\to E$ be the variation parametrisation, so that
\[
\eta_t=\bar\eta_{q(t)},
\qquad
\Var(\bar\eta;[u_1,u_2])=u_2-u_1.
\]
When $L=0$, $\bar\eta$ is understood to be constant. Set
\[
r_t=S(\eta|_{[0,t]}),
\qquad 0\leq t\leq T.
\]
The following are equivalent:
\begin{enumerate}[label=\textup{(\alph*)}]
\item $\bar\eta$ is the Hambly--Lyons reduced representative of the tree-like equivalence class of $\eta$;
\item every restriction $\eta|_{[s,t]}$ with trivial signature is constant;
\item if $r_s=r_t$ for $0\leq s<t\leq T$, then $\eta|_{[s,t]}$ is constant; equivalently, the prefix-signature lift is injective after constant waiting intervals are collapsed.
\end{enumerate}
In particular, every restriction of a path satisfying these conditions again satisfies them.
\end{proposition}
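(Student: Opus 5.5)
The plan is to prove the cycle (a)$\Rightarrow$(b)$\Rightarrow$(c)$\Rightarrow$(a). The tools are Chen's identity, Hambly--Lyons uniqueness of reduced representatives, and the Boedihardjo--Geng--Lyons--Yang injective reduction of the prefix lift, as listed among the external inputs. A preliminary step checks invariance under the variation reparametrisation. Because $q$ is continuous and nondecreasing, with $\eta=\bar\eta\circ q$, restrictions correspond: $\eta|_{[s,t]}$ and $\bar\eta|_{[q(s),q(t)]}$ differ by weak reparametrisation, so they have equal signatures. Moreover $\eta|_{[s,t]}$ is constant if and only if $q(s)=q(t)$. Hence (b) and (c) for $\eta$ are equivalent to the same conditions for $\bar\eta$, and $\bar\eta$ has no waiting intervals. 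The final assertion about restrictions follows directly from (b), since a subpath of a restriction is a subpath of $\eta$.

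For (b)$\Leftrightarrow$(c), Chen's identity gives $r_t=r_s\,S(\eta|_{[s,t]})$. Invertibility in the signature group then yields $r_s=r_t$ if and only if $S(\eta|_{[s,t]})=1$, so the two conditions coincide. The phrase ``injective after collapsing waiting intervals'' is exactly the statement that $r_s=r_t$ forces $q(s)=q(t)$.

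For (a)$\Rightarrow$(b), suppose $\bar\eta$ is reduced but some restriction $\bar\eta|_{[u_1,u_2]}$ with $u_1<u_2$ has trivial signature. Excise that interval: concatenate $\bar\eta|_{[0,u_1]}$ with the translate of $\bar\eta|_{[u_2,L]}$. The translation is trivial because trivial signature forces equal endpoints. The excised path has the same signature by Chen's identity, so it lies in the same tree-like class. Its length is $L-(u_2-u_1)<L$, contradicting the fact that the Hambly--Lyons reduced representative has minimal length in its class.

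For (b)$\Rightarrow$(a), I would take the reduced representative $\rho$ of the class and compare it with $\bar\eta$. This is the main obstacle. First I would verify that the prefix lift $t\mapsto r_t$ into $G^{(*)}_{\mathrm{p.r.c.}}$ is continuous and of finite $1$-variation. The argument mirrors \cref{lem:prefix-continuity}(b): one bounds the distance between truncated prefixes at every level by $L(\eta|_{[s,t]})$ through the horizontal-lift length, with norm equivalence absorbing the choice of Euclidean norm. This places us within the hypotheses of \cite[Lemma~4.6]{BGLY2016}. That lemma gives a unique injective reduction of the lift up to reparametrisation, and \cite[Remark~4.1]{BGLY2016} identifies its first-level projection with the Hambly--Lyons reduced representative. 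Under (c), the lift of $\bar\eta$ is already injective, so by uniqueness it is that reduction up to reparametrisation. Since $\bar\eta$ is parametrised by arc length and started at the same base point, it must equal the reduced representative, up to translation and increasing reparametrisation. I expect the delicate points to be checking that the reparametrisations in the BGLY uniqueness statement are increasing, and matching their metric to ours.
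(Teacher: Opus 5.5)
Your proposal is correct and follows essentially the paper's proof: Chen's identity for (b)$\Leftrightarrow$(c), excision against the minimal-length property for (a)$\Rightarrow$(b), and \cite[Lemma~4.6]{BGLY2016} with \cite[Remark~4.1]{BGLY2016} applied to the injective lift $u\mapsto S(\bar\eta|_{[0,u]})$ for the converse. The one adjustment is the hypothesis check. The metric there is $d_{\mathrm{BGLY}}(a,b)=\sup_{n\geq1}\norm{(a^{-1}b)_n}_{\pi,n}^{1/n}$, not the Carnot--Carath\'eodory metric of \cref{lem:prefix-continuity}, and the paper verifies both membership in $G^{(*)}_{\mathrm{p.r.c.}}$ and the $1$-Lipschitz property directly from the factorial estimate $\norm{S_n(\bar\eta|_{[u_1,u_2]})}_{\pi,n}^{1/n}\leq (u_2-u_1)/(n!)^{1/n}\leq u_2-u_1$, which settles your metric-matching concern without any horizontal-lift argument.
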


\begin{proof}
Chen's identity gives
\[
r_s^{-1}r_t=S(\eta|_{[s,t]}),
\]
so \textup{(b)} and \textup{(c)} are equivalent.

Assume \textup{(a)}, and suppose that $S(\eta|_{[s,t]})=1$. Its first level then vanishes, so $\eta_s=\eta_t$. If the subpath were nonconstant, deleting the interval $[s,t]$ would produce a continuous bounded-variation path $\eta^{\sharp}$ with
\[
S(\eta^{\sharp})
 =S(\eta|_{[0,s]})S(\eta|_{[t,T]})
 =S(\eta)
\]
by Chen's identity, whereas additivity of variation would give
\[
\Var(\eta^{\sharp})
 =\Var(\eta)-\Var(\eta;[s,t])
 <\Var(\eta)=L.
\]
This contradicts the minimal-length property of the Hambly--Lyons reduced representative $\bar\eta$. Hence \textup{(b)} holds.

Conversely, assume \textup{(c)}. If $L=0$, the claim is immediate. For
\[
\bar r_u=S(\bar\eta|_{[0,u]}),
\qquad 0\leq u\leq L,
\]
we have $r_t=\bar r_{q(t)}$ by reparametrisation invariance of the signature. If $0\leq u_1<u_2\leq L$ and $\bar r_{u_1}=\bar r_{u_2}$, choose $t_1\leq t_2$ with $q(t_j)=u_j$. Then $r_{t_1}=r_{t_2}$, so \textup{(c)} makes $\eta|_{[t_1,t_2]}$ constant. This is impossible because
\[
\Var(\eta;[t_1,t_2])=u_2-u_1>0.
\]
Thus $u\mapsto\bar r_u$ is injective.

We now verify the analytic hypotheses of the reduced-signature-path theorem. Following \cite[Definition~2.1]{BGLY2016}, let $G^{(*)}_{\mathrm{p.r.c.}}(E)$ denote the group-like series
\[
a=1+\sum_{n\geq1}a_n
\qquad\text{such that}\qquad
\sup_{n\geq1}\norm{a_n}_{\pi,n}^{1/n}<\infty,
\]
and equip this space with the metric
\[
d_{\mathrm{BGLY}}(a,b)
 =\sup_{n\geq1}
   \norm{(a^{-1}b)_n}_{\pi,n}^{1/n},
\]
where $(a^{-1}b)_n$ denotes the homogeneous degree-$n$ component; this avoids conflict with the step-truncation notation $\pi_N$ used above. The projective tensor norms used here satisfy the admissibility hypotheses of \cite[Section~2.1]{BGLY2016}. The factorial estimate gives, for every $u\in[0,L]$,
\[
\sup_{n\geq1}\norm{S_n(\bar\eta|_{[0,u]})}_{\pi,n}^{1/n}
 \leq \sup_{n\geq1}\frac{u}{(n!)^{1/n}}
 \leq u,
\]
so $\bar r_u\in G^{(*)}_{\mathrm{p.r.c.}}(E)$. Moreover, for $0\leq u_1<u_2\leq L$, Chen's identity and the unit-speed variation parametrisation give
\[
\begin{aligned}
d_{\mathrm{BGLY}}(\bar r_{u_1},\bar r_{u_2})
&=\sup_{n\geq1}
  \norm{S_n(\bar\eta|_{[u_1,u_2]})}_{\pi,n}^{1/n}\\
&\leq \sup_{n\geq1}
  \frac{u_2-u_1}{(n!)^{1/n}}
 \leq u_2-u_1.
\end{aligned}
\]
Thus $\bar r$ is $1$-Lipschitz in that metric, hence continuous and of finite $1$-variation.

Apply the reduced-signature-path theorem of Boedihardjo--Geng--Lyons--Yang to $\bar r$ \cite[Lemma~4.6]{BGLY2016}. It produces an injective path $\widetilde r$ with the same endpoints, unique up to reparametrisation. Since $\bar r$ is already injective, the uniqueness clause makes $\bar r$ a reparametrisation of $\widetilde r$. In the finite-$1$-variation case used here, the first-level projection of $\widetilde r$ is the Hambly--Lyons reduced representative \cite[Remark~4.1]{BGLY2016}. Because
\[
\pi_1(\bar r_u)=\bar\eta_u-\bar\eta_0,
\]
it follows that $\bar\eta$ itself represents that reduced path. Hence \textup{(a)} holds.

Finally, condition \textup{(b)} is inherited by restrictions, proving the last assertion.
\end{proof}

\begin{proposition}[Weak reparametrisation and reduced uniqueness]\label{prop:weak-reparametrisation}
Let $E$ be a finite-dimensional real vector space, and let $\xi:[0,A]\to E$ and $\widetilde\xi:[0,B]\to E$ be continuous bounded-variation paths with $\xi\sim_{\mathrm{rep}}\widetilde\xi$, witnessed by continuous nondecreasing surjections $\sigma:[0,1]\to[0,A]$ and $\widetilde\sigma:[0,1]\to[0,B]$.
\begin{enumerate}[label=\textup{(\alph*)}]
\item If $0\leq u\leq w\leq1$, $s=\sigma(u)$, $t=\sigma(w)$, $\widetilde s=\widetilde\sigma(u)$, and $\widetilde t=\widetilde\sigma(w)$, then
\[
\xi|_{[s,t]}\sim_{\mathrm{rep}}
\widetilde\xi|_{[\widetilde s,\widetilde t]}.
\]
The two subpaths have equal signatures and are constant simultaneously.
\item The equivalent reducedness conditions in \cref{prop:reducedness} are invariant under $\sim_{\mathrm{rep}}$.
\item If neither path has a nontrivial constant subinterval, then there is an increasing homeomorphism $h:[0,A]\to[0,B]$ such that
\[
\xi=\widetilde\xi\circ h.
\]
\end{enumerate}
Consequently, two tree-reduced bounded-variation paths with the same initial point and the same signature are weakly reparametrisation-equivalent.
\end{proposition}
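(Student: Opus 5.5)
Part (a) is elementary. I will work directly from the definition of $\sim_{\mathrm{rep}}$ and restrict the witnessing maps. Given $0\leq u\leq w\leq1$, set $\sigma'=\sigma|_{[u,w]}$ and $\widetilde\sigma'=\widetilde\sigma|_{[u,w]}$, rescaled affinely to $[0,1]$. These are continuous nondecreasing surjections onto $[s,t]$ and $[\widetilde s,\widetilde t]$, by the intermediate value theorem and monotonicity. They satisfy $\xi\circ\sigma'=\widetilde\xi\circ\widetilde\sigma'$, which witnesses $\xi|_{[s,t]}\sim_{\mathrm{rep}}\widetilde\xi|_{[\widetilde s,\widetilde t]}$. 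After translating the parameter interval to start at $0$, this is the claimed relation.

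For the equality of signatures, I will show that the signature is invariant under composition with a continuous nondecreasing surjection. Riemann--Stieltjes sums for $\xi\circ\sigma'$ map to Riemann--Stieltjes sums for $\xi$ on partitions whose mesh tends to zero, using uniform continuity of $\sigma'$. Equivalently, the Stieltjes measure of $\xi\circ\sigma'$ pushes forward to that of $\xi$. Hence $S(\xi|_{[s,t]})=S(\xi\circ\sigma')=S(\widetilde\xi\circ\widetilde\sigma')=S(\widetilde\xi|_{[\widetilde s,\widetilde t]})$. Constancy is shared for the same reason: both subpaths have the same image set $\xi(\sigma([u,w]))=\widetilde\xi(\widetilde\sigma([u,w]))$.

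Part (b) will follow from (a) and criterion (b) of \cref{prop:reducedness}. Suppose $\widetilde\xi$ is tree-reduced and $\xi|_{[s,t]}$ has trivial signature. Choose $u\leq w$ with $\sigma(u)=s$ and $\sigma(w)=t$, using surjectivity. By (a), the corresponding subpath of $\widetilde\xi$ has trivial signature, so it is constant. Applying (a) again shows that $\xi|_{[s,t]}$ is constant. The situation is symmetric in the two paths.

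The main point is (c). The plan is to define $h$ on $[0,A]$ by $h(\sigma(u))=\widetilde\sigma(u)$. For well-definedness, suppose $\sigma(u)=\sigma(w)$ with $u<w$. Then $\sigma$ is constant on $[u,w]$, so $\xi\circ\sigma$ is constant there, and hence $\widetilde\xi$ is constant on $\widetilde\sigma([u,w])$. Since $\widetilde\xi$ has no nontrivial constant subinterval, $\widetilde\sigma(u)=\widetilde\sigma(w)$.

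The resulting map $h$ is nondecreasing and surjective. It is also continuous: a monotone surjection between intervals has no jumps. By the symmetric construction it has a nondecreasing inverse, so $h$ is an increasing homeomorphism. The identity $\xi=\widetilde\xi\circ h$ then holds by construction.

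For the consequence, take two tree-reduced paths with the same initial point and the same signature. First suppress constant intervals in each, which changes neither the $\sim_{\mathrm{rep}}$ class nor the signature. By \cref{prop:reducedness}, the variation parametrisations of both paths are Hambly--Lyons reduced representatives of the same tree-like class. The Hambly--Lyons uniqueness theorem, up to translation (fixed here by the common initial point) and increasing reparametrisation, together with the transitivity of $\sim_{\mathrm{rep}}$ for the variation reparametrisations, gives the conclusion. The expected obstacle is the transitivity of $\sim_{\mathrm{rep}}$, i.e.\ composing two witnessing pairs. I would handle it by first passing to the canonical paths with no constant subintervals, where (c) supplies homeomorphisms that can simply be composed.
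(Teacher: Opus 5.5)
Your proposal is correct and follows the paper's proof essentially step for step. It restricts the witnessing maps for (a), transfers criterion (b) of \cref{prop:reducedness} through (a) for (b), uses the fibre-matching definition $h(\sigma(u))=\widetilde\sigma(u)$ for (c), and uses variation parametrisation plus Hambly--Lyons uniqueness for the final assertion. The transitivity point you flag is passed over in the paper as well (``restoring the suppressed waiting intervals''), and composing homeomorphisms at the waiting-free level does not by itself finish it. You still need that two continuous nondecreasing surjections $c_1,c_2$ onto a common interval admit a common monotone lift. You can build one by traversing each fibre rectangle $c_1^{-1}(y)\times c_2^{-1}(y)$ diagonally from its lower-left corner to its upper-right corner, parametrised by the sum of the two coordinates.
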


\begin{proof}
Write
\[
\zeta=\xi\circ\sigma=\widetilde\xi\circ\widetilde\sigma.
\]
For $0\leq u\leq w\leq1$, the restrictions of $\sigma$ and $\widetilde\sigma$ to $[u,w]$ are nondecreasing surjections onto $[s,t]$ and $[\widetilde s,\widetilde t]$, respectively. They therefore witness the asserted weak reparametrisation of the two subpaths. Signature invariance under nondecreasing surjective reparametrisation gives equality of their signatures. Surjectivity also shows that one subpath is constant if and only if $\zeta|_{[u,w]}$ is constant, which is equivalent to constancy of the other subpath. This proves part \textup{(a)}.

Every closed subinterval of either original path is obtained from a suitable interval $[u,w]$. Part \textup{(a)} therefore transfers the absence of nonconstant trivial-signature subpaths from one path to the other. By \cref{prop:reducedness}, this proves part \textup{(b)}.

Assume now that neither path has a nontrivial constant subinterval. If $\sigma(u)=\sigma(w)$, then $\zeta$ is constant on $[u,w]$; hence $\widetilde\sigma(u)=\widetilde\sigma(w)$, since otherwise $\widetilde\xi$ would be constant on a nontrivial interval. The converse follows symmetrically, so $\sigma$ and $\widetilde\sigma$ have the same fibres. Therefore
\[
h(s)=\widetilde\sigma(u)
\qquad\text{for any }u\in\sigma^{-1}(s)
\]
is well defined. It is an order isomorphism from $[0,A]$ onto $[0,B]$, hence an increasing homeomorphism, and
\[
\widetilde\xi(h(s))
 =\widetilde\xi(\widetilde\sigma(u))
 =\zeta(u)
 =\xi(\sigma(u))
 =\xi(s).
\]
This proves part \textup{(c)}.

For the final assertion, suppress waiting intervals by variation parametrisation. By \cref{prop:reducedness}, the resulting paths are Hambly--Lyons reduced representatives. If the original paths have the same initial point and signature, Hambly--Lyons uniqueness \cite[Corollary~1.6]{HamblyLyons2010} identifies the two waiting-free representatives up to increasing reparametrisation. Restoring the suppressed waiting intervals gives weak reparametrisation of the original paths.
\end{proof}

\begin{proposition}[Prefix trace of a tree-reduced path]\label{prop:prefix-geodesic}
Let $E$ be a finite-dimensional real vector space equipped with an auxiliary Euclidean norm, and let $\eta:[0,T]\to E$ satisfy the equivalent conditions of \cref{prop:reducedness}. Then
\begin{equation}\label{eq:prefix-trace-geodesic}
\set{S(\eta|_{[0,t]}):0\leq t\leq T}
 =[1,S(\eta)]
\end{equation}
in the Le Donne--Z\"ust signature tree. Every point of the endpoint geodesic is therefore attained by a prefix of the original parametrised path. The geodesic trace and its order are independent of the auxiliary Euclidean norm.
\end{proposition}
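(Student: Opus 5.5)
The plan is to show that the prefix-signature lift $r_t=S(\eta|_{[0,t]})$ is a continuous injective path (after collapsing waiting intervals) from $1$ to $S(\eta)$ in the complete metric tree $(\Sl(E),d_\infty)$, and then to use the fact that in a metric tree the image of any continuous injective arc between two points is the unique geodesic segment between them.

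\textbf{Step 1: continuity and injectivity of the lift.} Continuity in $d_\infty$ is \cref{lem:prefix-continuity}(b), via \eqref{eq:prefix-bound}. Injectivity after waiting intervals are collapsed is \cref{prop:reducedness}(c). Concretely, I will pass to the variation parametrisation $\bar\eta$ and set $\bar r_u=S(\bar\eta|_{[0,u]})$. This map is $1$-Lipschitz into $(\Sl(E),d_\infty)$ and injective on $[0,L]$, and it has the same image as $t\mapsto r_t$ because $r_t=\bar r_{q(t)}$ with $q$ surjective onto $[0,L]$. If $L=0$, both sides of \eqref{eq:prefix-trace-geodesic} equal $\{1\}$.

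\textbf{Step 2: arcs are geodesics in a metric tree.} The map $\bar r:[0,L]\to\Sl(E)$ is a continuous injection from a compact interval into a Hausdorff space, so it is a topological embedding. Its image is therefore an arc joining $1$ to $S(\eta)$. In a metric tree ($0$-hyperbolic and uniquely arc-connected), every arc between two points coincides with the geodesic segment between them; this is one of the standard facts collected in \cref{app:metric-tree}, and I will cite it from there. This gives \eqref{eq:prefix-trace-geodesic}. The order is preserved in the following sense: $u\mapsto d_\infty(1,\bar r_u)$ is strictly increasing. The reason is that the arc through $\bar r_u$ from $1$ to $\bar r_{u'}$ is itself a geodesic, so $\bar r_u\in[1,\bar r_{u'}]$ whenever $u<u'$.

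\textbf{Step 3: independence of the Euclidean norm.} Changing the auxiliary Euclidean norm changes $d_N$ and hence $d_\infty$. However, the set $\{S(\eta|_{[0,t]})\}$ and its parameter order are defined purely algebraically, without reference to any metric. Step 2 applies for each choice of norm, so for every choice the geodesic $[1,S(\eta)]$ equals this same set, traversed in the same order. Hence the geodesic trace and its order do not depend on the norm, even though the metric parametrisation along it may change.

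\textbf{Main obstacle.} The delicate point is Step 2. I must make sure that the "arc equals geodesic" fact is applied to a genuinely injective continuous map, which is why I work with $\bar\eta$ rather than with $\eta$ itself. I must also make sure that the fact is valid in the complete, but possibly non-proper, Le Donne--Z\"ust tree. Uniqueness of arcs in $\R$-trees is a purely topological statement about geodesic $0$-hyperbolic spaces and needs no properness. If the appendix does not include it, I would prove it directly: for any $x=\bar r_u$, the gate of $x$ onto $[1,S(\eta)]$ must equal $x$. Otherwise removing the gate point would disconnect the two endpoints of the arc from $x$ in the tree, contradicting connectedness of the sub-arcs $\bar r([0,u])$ and $\bar r([u,L])$, both of which would then have to pass through the gate point twice, against injectivity.
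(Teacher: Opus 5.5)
Your proposal is correct and follows the paper's proof essentially step for step. You pass to the variation parametrisation, get injectivity of $u\mapsto S(\bar\eta|_{[0,u]})$ from \cref{prop:reducedness}, note that a continuous injection of a compact interval into the Hausdorff space $\Sl(E)$ is an arc, and identify that arc with $[1,S(\eta)]$ by the unique-arc property of the metric tree in \cref{thm:LDZ}.

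Two small differences from the paper:
\begin{itemize}
\item The paper takes the arc-equals-geodesic fact from the definition of a metric tree, not from \cref{app:metric-tree}, which does not state it. Either that definition or your fallback argument is the right source.
\item For norm-independence, the paper compares the $d_\infty$ metrics bi-Lipschitzly. Your observation works just as well: the prefix trace is defined without any metric, and it equals the geodesic for every choice of Euclidean norm.
\end{itemize}
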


\begin{proof}
Write $r_t=S(\eta|_{[0,t]})$. By \cref{lem:prefix-continuity}, $r$ is continuous. Let
\[
s(t)=\Var(\eta;[0,t]),
\qquad L=s(T).
\]
If $L=0$, the claim is immediate. Assume $L>0$, and let $\bar\eta:[0,L]\to E$ be the variation parametrisation, so that
\[
\eta_t=\bar\eta_{s(t)},
\qquad
\Var(\bar\eta;[u_1,u_2])=u_2-u_1.
\]
The map $s:[0,T]\to[0,L]$ is continuous, nondecreasing, and onto. With
\[
\bar r_u=S(\bar\eta|_{[0,u]}),
\]
reparametrisation invariance gives $r_t=\bar r_{s(t)}$ and hence $r([0,T])=\bar r([0,L])$.

The map $\bar r$ is injective. If $0\leq u_1<u_2\leq L$ and $\bar r_{u_1}=\bar r_{u_2}$, choose $t_1\leq t_2$ with $s(t_j)=u_j$. Then
\[
S(\eta|_{[t_1,t_2]})=1.
\]
By \cref{prop:reducedness}, the subpath is constant, whereas
\[
\Var(\eta;[t_1,t_2])=u_2-u_1>0,
\]
a contradiction. Thus $\bar r$ is a continuous injection from a compact interval into the Hausdorff space $\Sl(E)$, hence a homeomorphism onto an arc from $1$ to $S(\eta)$. By \cref{thm:LDZ}, this is the unique geodesic $[1,S(\eta)]$. Surjectivity of $s$ shows that every point is attained at an original prefix time.

If two Euclidean norms satisfy $c\norm{x}_1\leq\norm{x}_2\leq C\norm{x}_1$, their truncation-level Carnot--Carath\'eodory metrics, and hence their $d_\infty$ metrics, satisfy the same bi-Lipschitz comparison. They therefore define the same continuous arcs and the same order on each arc. This proves norm-independence.
\end{proof}

\subsection{Metric-tree inputs}

The fixed-point, projection, gate, and translation-axis facts used below are stated in \cref{app:metric-tree}. They are standard consequences of complete real-tree geometry and contain no signature-specific algebra.

\subsection{Straight signature lines and path reduction}

We now apply these metric-tree facts to rectifiable signature lines and then return from group factorisation to path reduction.

\begin{lemma}[Straight signature line]\label{lem:straight-signature-line}
Let $E$ be a finite-dimensional real Hilbert space, denote its norm by $\norm{\cdot}_E$, and let $0\neq v\in E$.
Then
\[
\mathcal{A}_v=\set{\e^{tv}:t\in\R}
\]
is a closed geodesic line in $(\Sl(E),d_\infty)$, and
\begin{equation}\label{eq:line-distance}
d_\infty(\e^{sv},\e^{tv})=\abs{t-s}\,\norm{v}_E
\qquad (s,t\in\R).
\end{equation}
If $a\in\Sl(E)$ and $g=a\e^v a^{-1}$, then
\[
\mathcal{A}=a\mathcal{A}_v
 =\set{a\e^{tv}:t\in\R}
\]
is a closed convex geodesic line, invariant under left multiplication by $g$, and
\begin{equation}\label{eq:axis-translation}
g\,a\e^{tv}=a\e^{(t+1)v}.
\end{equation}
\end{lemma}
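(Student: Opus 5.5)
The plan is to prove \cref{eq:line-distance} first and then transport every conclusion by left translation. Chen's identity gives $\e^{sv}\e^{(t-s)v}=\e^{tv}$, and by left invariance of $d_\infty$ (\cref{thm:LDZ}) it suffices to show
\[
d_\infty(1,\e^{\theta v})=\abs{\theta}\,\norm{v}_E
\qquad(\theta\in\R).
\]
For $\theta\geq0$, $\e^{\theta v}$ is the signature of the segment $t\mapsto t\theta v$, $0\leq t\leq1$, whose length is $\theta\norm{v}_E$. So the estimate preceding \cref{thm:LDZ} gives the upper bound at every truncation level. For the lower bound, look at level $N=1$. There $G_1(E)=E$, the Carnot--Carath\'eodory metric is the Euclidean distance, and $\pi_1\e^{\theta v}=\theta v$, so $d_1(0,\theta v)=\theta\norm{v}_E$. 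Taking the supremum over $N$ yields equality. For $\theta<0$, use $\e^{\theta v}=(\e^{-\theta v})^{-1}$: left invariance gives $d_\infty(1,g^{-1})=d_\infty(g,1)$, which reduces this case to the previous one.

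It follows that $t\mapsto\e^{tv}$ is an isometric embedding of $\R$ scaled by $\norm{v}_E>0$, so $\mathcal A_v$ is a geodesic line. Being the isometric image of a complete space, it is complete, hence closed in $\Sl(E)$.

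For the conjugate, note that left multiplication by $a$ is an isometry, so $\mathcal A=a\mathcal A_v$ is again a closed geodesic line. Convexity comes from the tree structure. In a metric tree the geodesic between two points is unique, and the segment of $\mathcal A$ between any two of its points is a geodesic. Hence $[x,y]\subset\mathcal A$ for all $x,y\in\mathcal A$. Invariance and \cref{eq:axis-translation} are a direct computation:
\[
g\,a\e^{tv}=a\e^v a^{-1}a\e^{tv}=a\e^{(1+t)v},
\]
using $\e^v\e^{tv}=\e^{(1+t)v}$, since $v$ commutes with itself. Thus $g\mathcal A=\mathcal A$.

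The only nontrivial point is the lower bound in the distance formula, and the level-one projection handles it, since $d_\infty$ is a supremum over levels. Everything else is left invariance together with uniqueness of geodesics in the tree from \cref{thm:LDZ}.
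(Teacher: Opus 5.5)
Your proof is correct and follows the paper's argument. Left invariance reduces everything to $d_\infty(1,\e^{\theta v})$; the straight segment gives the upper bound at every truncation level, and the level-one Euclidean projection gives the matching lower bound. Left translation by $a$ together with the computation $g\,a\e^{tv}=a\e^{(t+1)v}$ then handles the conjugate line. Your separate $\theta<0$ case is unnecessary, since the segment $u\mapsto u\theta v$ has length $\abs{\theta}\norm{v}_E$ for either sign. Your closedness (complete image) and convexity (unique geodesics) arguments just spell out what the paper states in one line.
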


\begin{proof}
By left invariance,
\[
d_\infty(\e^{sv},\e^{tv})
 =d_\infty(1,\e^{(t-s)v}).
\]
At every truncation level, the straight horizontal path $u\mapsto\e^{u(t-s)v}$ has length $\abs{t-s}\norm{v}_E$, so the distance is at most this number.
At level one, the Carnot--Carath\'eodory metric is the Euclidean norm, giving the reverse inequality.
This proves \cref{eq:line-distance}.
Hence $t\mapsto\e^{tv}$ is, up to the constant speed $\norm{v}_E$, an isometric embedding of $\R$; its image is a closed geodesic line and therefore convex.
Left translation by $a$ preserves all of these properties.
Finally,
\[
g\,a\e^{tv}
 =a\e^v a^{-1}a\e^{tv}
 =a\e^{(t+1)v},
\]
which proves invariance and \cref{eq:axis-translation}.
\end{proof}

\begin{proposition}[Geometric axis and gate decomposition]\label{prop:axis-gate-decomposition}
Let $V$ be a finite-dimensional real normed space, let $0\neq v\in V$, and let $g,a\in\Sl(V)$ satisfy
\[
g=a\e^v a^{-1}.
\]
Equip $V$ with an auxiliary Euclidean norm, denoted by $\norm{\cdot}_{\mathrm{Euc}}$, and set
\[
\mathcal{A}=a\mathcal{A}_v
 =\set{a\e^{tv}:t\in\R},
\qquad L_g(x)=gx,
\]
and let $p$ be the metric projection of $1$ onto $\mathcal{A}$. Then, for every $x\in\Sl(V)$,
\begin{align}
 d_{\infty}(x,gx)
 &=2d_{\infty}(x,\mathcal{A})+\norm{v}_{\mathrm{Euc}},
 \label{eq:displacement-axis}\\
 \mathcal{A}
 &=\set{x\in\Sl(V):d_{\infty}(x,gx)=\norm{v}_{\mathrm{Euc}}}.
 \label{eq:min-displacement-axis}
\end{align}
Thus $\mathcal{A}$ is the unique minimum-displacement axis of $L_g$ and is independent of the chosen factorisation of $g$. Moreover,
\begin{equation}\label{eq:conjugacy-geodesic-decomposition}
[1,g]=[1,p]\cup[p,gp]\cup[gp,g],
\end{equation}
where the three pieces meet only at $p$ and $gp$, and
\begin{equation}\label{eq:reduced-conjugator-identity}
gp=p\e^v,
\qquad
g=p\e^v p^{-1}.
\end{equation}
Finally,
\begin{equation}\label{eq:axis-intersection}
\mathcal{A}\cap[1,g]=[p,gp].
\end{equation}
The first and third pieces in \cref{eq:conjugacy-geodesic-decomposition} degenerate simultaneously.
\end{proposition}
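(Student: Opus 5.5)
The plan is to work in the complete real tree $(\Sl(V),d_\infty)$ of \cref{thm:LDZ}, using the auxiliary Euclidean norm, and to treat $L_g$ as an isometry that translates the line $\mathcal A$. By \cref{lem:straight-signature-line}, $\mathcal A$ is a closed convex geodesic line, $L_g$ preserves it, and on it $L_g$ acts as translation by $\norm{v}_{\mathrm{Euc}}$ via \cref{eq:axis-translation}. Left translations are isometries of $d_\infty$, so $L_g$ is a hyperbolic isometry of a complete metric tree with an invariant line. The standard translation-axis facts of \cref{app:metric-tree} then give \cref{eq:displacement-axis}. Concretely, for $x\in\Sl(V)$ let $\pi(x)$ be its gate onto $\mathcal A$. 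Then $\pi(gx)=g\pi(x)$, because $L_g$ is an isometry preserving $\mathcal A$ and gates are defined metrically. Since $\pi(x)\neq g\pi(x)$ when $v\neq0$, the geodesic from $x$ to $gx$ is the concatenation $[x,\pi(x)]\cup[\pi(x),g\pi(x)]\cup[g\pi(x),gx]$. This yields displacement $2d(x,\mathcal A)+\norm{v}_{\mathrm{Euc}}$. Equation \cref{eq:min-displacement-axis} follows at once, since the displacement equals $\norm{v}_{\mathrm{Euc}}$ exactly when $d(x,\mathcal A)=0$ and $\mathcal A$ is closed. Because the right-hand side of \cref{eq:min-displacement-axis} depends only on $g$ and on $\norm{v}_{\mathrm{Euc}}$, and $v=g_1$ is determined by $g$, the axis does not depend on the factorisation.

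Next, apply the same concatenation to $x=1$, with $p=\pi(1)$ and $gp=\pi(g)$. This gives \cref{eq:conjugacy-geodesic-decomposition}, with pieces meeting only at $p$ and $gp$. The tree fact that makes this work is that the geodesic between two points whose gates onto a convex set are distinct passes through both gates. If $p=1$ then $gp=g$, so the outer pieces degenerate together, and conversely. Equation \cref{eq:axis-intersection} follows because $[x,\pi(x)]$ meets $\mathcal A$ only at $\pi(x)$, so the only part of $[1,g]$ lying in $\mathcal A$ is the middle segment.

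For \cref{eq:reduced-conjugator-identity}, write $p=a\e^{t_0v}$ for some $t_0\in\R$, which is possible since $p\in\mathcal A$. By \cref{eq:axis-translation}, $gp=a\e^{(t_0+1)v}=p\e^{v}$. Then $p\e^vp^{-1}=gpp^{-1}=g$. This step uses only the fact that powers of $\e^{v}$ commute.

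I expect the main obstacle to be justifying the gate and equivariance facts rigorously in this possibly non-locally-compact tree. In particular one needs that the metric projection onto the closed convex line exists and is unique, and that it commutes with isometries preserving $\mathcal A$. I would take both from \cref{app:metric-tree}, using completeness and the fact that $\mathcal A$ is an isometric copy of $\R$.
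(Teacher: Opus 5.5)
Your proof is correct and follows essentially the same route as the paper. You use \cref{lem:straight-signature-line} to get an invariant closed line on which $L_g$ translates by $\norm{v}_{\mathrm{Euc}}$, and then the appendix projection and gate facts, which you effectively re-derive inline from \cref{lem:projection-gates} rather than citing \cref{prop:metric-tree-axis}. Finally you write $p=a\e^{t_0v}$ to obtain $gp=p\e^v$. The one step you leave implicit is that $[p,gp]\subset\mathcal{A}$ by convexity of the line, which the reverse inclusion in \cref{eq:axis-intersection} needs.
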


\begin{proof}
By \cref{lem:straight-signature-line}, $\mathcal{A}$ is a closed geodesic line invariant under $L_g$, and
\[
\alpha_{\mathcal A}(t)=a\e^{(t/\norm{v}_{\mathrm{Euc}})v}
\]
is an isometric parametrisation satisfying
$L_g(\alpha_{\mathcal A}(t))=\alpha_{\mathcal A}(t+\norm{v}_{\mathrm{Euc}})$.
Applying \cref{prop:metric-tree-axis} gives \cref{eq:displacement-axis,eq:min-displacement-axis} and shows that $gp$ is the projection of $g$ onto $\mathcal{A}$. The same proposition gives \cref{eq:conjugacy-geodesic-decomposition} and its intersection properties.

Since $p\in\mathcal{A}$, write $p=a\e^{sv}$. Then
\[
gp=a\e^{(s+1)v}=p\e^v,
\]
which gives \cref{eq:reduced-conjugator-identity}. Convexity yields $[p,gp]\subset\mathcal{A}$. The projection property excludes points of $[1,p)$ and $(gp,g]$ from $\mathcal{A}$, proving \cref{eq:axis-intersection}. The final assertion follows from $gp=g$ if and only if $p=1$.
\end{proof}

The resulting axis--gate decomposition is illustrated in
\cref{fig:axis-gate}.

\begin{figure}[ht]
\centering
\begin{tikzpicture}[
  every node/.style={font=\small},
  point/.style={circle, fill=black, inner sep=1.7pt},
  move/.style={-{Latex[length=2mm]}, line width=0.55pt}
]
\coordinate (one) at (-2.4,1.35);
\coordinate (p) at (-0.9,0);
\coordinate (gp) at (0.9,0);
\coordinate (g) at (2.4,1.35);

\draw[dashed, line width=0.6pt] (-3.2,0)--(3.2,0);
\node[right] at (3.2,0) {$\mathcal{A}$};
\draw[line width=1.1pt] (p)--(gp);
\draw[line width=0.9pt] (one)--(p);
\draw[line width=0.9pt] (gp)--(g);

\node[point] at (one) {};
\node[above left=1pt of one] {$1$};
\node[point] at (p) {};
\node[below=2pt of p] {$p$};
\node[point] at (gp) {};
\node[below=2pt of gp] {$gp$};
\node[point] at (g) {};
\node[above right=1pt of g] {$g$};

\node[above left=16pt and -3pt of p] {$[1,p]$};
\node[above=5pt of p, xshift=0.9cm] {$[p,gp]$};
\node[above right=16pt and -7pt of gp] {$[gp,g]$};

\draw[move] (-2.25,1.65) to[bend left=18]
  node[midway, above] {$L_g$} (2.25,1.65);
\draw[{Latex[length=2mm]}-{Latex[length=2mm]}, line width=0.55pt]
  (-0.9,-0.52)--(0.9,-0.52)
  node[midway, below] {$\norm{v}_{\mathrm{Euc}}$};
\end{tikzpicture}
\caption{Axis and gate decomposition in the signature tree. Here $gp=p\e^v$, the geodesic $[1,g]$ meets the invariant axis exactly in $[p,gp]$, and $d_{\infty}(p,gp)=\norm{v}_{\mathrm{Euc}}$.}
\label{fig:axis-gate}
\end{figure}

\begin{theorem}[Reduction lemma: reduced-group and path conjugacy]\label{thm:reduction-lemma}
Let $V$ be a finite-dimensional real normed space and let $\gamma:[0,T]\to V$ be a continuous tree-reduced bounded-variation path, translated so that $\gamma_0=0$. Set
\[
g=S(\gamma),\qquad v=S_1(\gamma)=\gamma_T\neq0.
\]
The following are equivalent.
\begin{enumerate}[label=\textup{(\alph*)}]
\item There is $a\in\Sl(V)$ such that $g=a\e^v a^{-1}$.
\item There is $t_0\in[0,T]$ such that, for $\beta=\gamma|_{[0,t_0]}$,
\[
\gamma\sim_{\mathrm{rep}}
\beta*\lambda_v*\overleftarrow{\beta}.
\]
\end{enumerate}
Under \textup{(a)}, let $p$ be the gate from $1$ to the axis in \cref{prop:axis-gate-decomposition}. One may choose $S(\beta)=p$, and this prefix signature is independent of the auxiliary Euclidean norm.
\end{theorem}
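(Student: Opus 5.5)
The direction (b)$\Rightarrow$(a) is immediate. Chen's identity and reparametrisation invariance of the signature give $g=S(\beta)\,\e^v\,S(\beta)^{-1}$, because $S(\overleftarrow{\beta})=S(\beta)^{-1}$. So we may take $a=S(\beta)$. The substance lies in (a)$\Rightarrow$(b). For it, I will equip $V$ with an auxiliary Euclidean norm and apply \cref{prop:axis-gate-decomposition}. This yields the gate $p$, the relation $g=p\e^vp^{-1}$, and the geodesic decomposition $[1,g]=[1,p]\cup[p,gp]\cup[gp,g]$ with $gp=p\e^v$.

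Since $\gamma$ is tree-reduced, \cref{prop:prefix-geodesic} identifies the prefix trace $t\mapsto r_t=S(\gamma|_{[0,t]})$ with $[1,g]$. Every point of this geodesic is attained, and by \cref{prop:reducedness}(c) it is attained on a single time interval on which $\gamma$ is constant. Monotonicity of the order along the arc then lets me choose times $t_0\le t_1$ with $r_{t_0}=p$ and $r_{t_1}=gp$. Set $\beta=\gamma|_{[0,t_0]}$. By Chen's identity, $S(\gamma|_{[t_0,t_1]})=p^{-1}gp=\e^v$ and $S(\gamma|_{[t_1,T]})=(gp)^{-1}g=p^{-1}$. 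The middle piece is tree-reduced, as a restriction of a reduced path (\cref{prop:reducedness}), and the segment $\lambda_v$ is also tree-reduced. Both have signature $\e^v$, so the final assertion of \cref{prop:weak-reparametrisation} gives $\gamma|_{[t_0,t_1]}-\gamma_{t_0}\sim_{\mathrm{rep}}\lambda_v$.

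The last piece $\gamma|_{[t_1,T]}$ is reduced with signature $p^{-1}=S(\beta)^{-1}=S(\overleftarrow{\beta})$. The based reverse of a reduced path is reduced, since the reverse of a trivial-signature subpath again has trivial signature. Hence \cref{prop:weak-reparametrisation} gives $\gamma|_{[t_1,T]}-\gamma_{t_1}\sim_{\mathrm{rep}}\overleftarrow{\beta}$. To conclude that $\gamma\sim_{\mathrm{rep}}\beta*\lambda_v*\overleftarrow{\beta}$, I need that $\sim_{\mathrm{rep}}$ is compatible with concatenation. This holds because the witnessing surjections on the three pieces can be glued into global nondecreasing surjections, after rescaling each piece's parameter domain to a third of $[0,1]$.

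It remains to address the norm-independence claim. The gate $p$ is characterised intrinsically as the endpoint of the initial segment of $[1,g]$ that leaves the axis. By \cref{eq:min-displacement-axis}, the axis is $\mathcal A=\{p\e^{tv}\}$, and this set is purely algebraic once $p$ is known. The cleanest formulation is the following: $p$ is the last point $r_t$ of the prefix trace, in the arc order, such that $r_t^{-1}g\,r_t=\e^v$. This follows from \cref{eq:axis-intersection}, because $\mathcal A\cap[1,g]=[p,gp]$, and a point $x=r_t$ on $[1,g]$ lies in $\mathcal A$ exactly when $x^{-1}gx=\e^v$. That equivalence holds because a point of $\mathcal A$ has the form $x=p\e^{sv}$, and conversely $x^{-1}gx=\e^v$ gives $gx=x\e^v$, so the displacement of $x$ equals $\norm{v}_{\mathrm{Euc}}$. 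The condition $r_t^{-1}g\,r_t=\e^v$ is algebraic, and the prefix trace and its order are norm-independent by \cref{prop:prefix-geodesic}. Therefore $p$ does not depend on the auxiliary norm. I expect the main obstacle to be this characterisation, together with the careful handling of waiting intervals when choosing $t_0$ and $t_1$; the rest is assembly of earlier results.
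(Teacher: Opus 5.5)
Your proof is correct apart from one slip in the norm-independence step, and it reaches (a)$\Rightarrow$(b) by a different route from the paper.

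\textbf{Comparison with the paper.} The paper never cuts $\gamma$. It builds the model path $\delta=\beta*\lambda_v*\overleftarrow{\beta}$ and computes its prefix-signature trace piece by piece; the reversed part traces $g[p,1]=[gp,g]$. Because the three pieces of $[1,g]=[1,p]\cup[p,gp]\cup[gp,g]$ meet only at consecutive endpoints, the lift of $\delta$ is injective, so $\delta$ is tree-reduced by \cref{prop:reducedness}. The paper then applies Hambly--Lyons uniqueness once, to $\gamma$ and $\delta$.

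You instead cut $\gamma$ at times $t_0\le t_1$ with $r_{t_0}=p$ and $r_{t_1}=gp$. You apply uniqueness separately to the middle piece, which has signature $\e^v$ and is compared with $\lambda_v$, and to the last piece, which has signature $p^{-1}$ and is compared with $\overleftarrow{\beta}$. You then glue the three pieces.

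What each route needs:
\begin{itemize}
\item Yours needs only that $p$ strictly precedes $gp$ on the arc. This holds because $d_\infty(p,gp)=d_\infty(1,\e^v)=\norm{v}_{\mathrm{Euc}}>0$ and the decomposition orders them. You do not need the disjointness of the three pieces or the trace computation for the reversed part.
\item In exchange, yours needs three elementary facts the paper avoids: $\lambda_v$ is reduced; the based reverse of a reduced path is reduced; and $\sim_{\mathrm{rep}}$ is compatible with translated concatenation. You justify all three adequately.
\end{itemize}

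\textbf{The slip in the norm-independence argument.} Your description of the axis as $\set{x:x^{-1}gx=\e^v}$ is correct. It is a nice touch, since it makes independence from the choice of $a$ manifest, whereas the paper simply says $a\mathcal{A}_v$ is fixed. However, your characterisation of $p$ is wrong as stated. The prefix points with $r_t^{-1}g\,r_t=\e^v$ form exactly $\mathcal{A}\cap[1,g]=[p,gp]$. In the arc order from $1$, the \emph{last} such point is $gp$, not $p$.

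You need the \emph{first} such point. With that correction, your argument coincides with the paper's: $p$ is the first point of the algebraic set $\mathcal{A}$ met along the norm-independent ordered arc $[1,g]$ from \cref{prop:prefix-geodesic}.
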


\begin{proof}
Condition \textup{(b)} implies \textup{(a)} by Chen's identity. Assume \textup{(a)} and use the notation of \cref{prop:axis-gate-decomposition}.
For
\[
r_t=S(\gamma|_{[0,t]}),
\]
\cref{prop:prefix-geodesic} gives $r([0,T])=[1,g]$. Hence $p$ is attained at some $t_0\in[0,T]$. Set $\beta=\gamma|_{[0,t_0]}$. By \cref{prop:reducedness}, $\beta$ is tree-reduced, and \cref{prop:prefix-geodesic} identifies its prefix trace with $[1,p]$.

Let
\[
\delta=\beta*\lambda_v*\overleftarrow{\beta}.
\]
After waiting intervals are suppressed, the first part of its prefix-signature lift traverses $[1,p]$ injectively. The line segment traverses
\[
\set{p\e^{uv}:0\leq u\leq1}=[p,gp]
\]
injectively by \cref{eq:line-distance}. To describe the reversed final part, let $q=S(\beta|_{[0,s]})$ be a prefix signature of $\beta$. The corresponding partial reverse has signature $p^{-1}q$, so the full prefix signature of $\delta$ at that point is
\[
(p\e^v)(p^{-1}q)=gq.
\]
Thus this part traverses the left translate $g[p,1]=[gp,g]$ injectively. The intersection properties in \cref{eq:conjugacy-geodesic-decomposition} show that the three traces meet only at their consecutive endpoints. Hence the full prefix-signature lift of $\delta$ is injective, and \cref{prop:reducedness} shows that $\delta$ is tree-reduced.

By \cref{eq:reduced-conjugator-identity},
\[
S(\delta)=p\e^v p^{-1}=g.
\]
Hence $\delta$ and $\gamma$ are tree-reduced representatives of the same tree-like equivalence class. The final assertion of \cref{prop:weak-reparametrisation} gives
\[
\gamma\sim_{\mathrm{rep}}\delta
 =\beta*\lambda_v*\overleftarrow{\beta}.
\]

For norm-independence, the algebraic axis $\mathcal{A}=a\mathcal{A}_v$ is fixed, while \cref{eq:axis-intersection} characterises $p$ as the first point of $\mathcal{A}\cap[1,g]$ encountered from $1$. By \cref{prop:prefix-geodesic}, equivalent auxiliary Euclidean norms give the same arc $[1,g]$ with the same order. They therefore give the same gate $p$.
\end{proof}

\begin{corollary}[Unique reduced conjugating prefix]\label{cor:canonical-prefix}
Under the hypotheses of \cref{thm:reduction-lemma}, assume its equivalent conditions. The minimum-displacement axis $\mathcal{A}$ and its gate $p$ from $1$ are determined by $g$. The based prefix $\beta$ with $S(\beta)=p$ is the unique prefix that yields a reduced literal path conjugation, up to waiting intervals and weak increasing reparametrisation.
\end{corollary}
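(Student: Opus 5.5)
The plan has three steps. First, \cref{prop:axis-gate-decomposition} shows that the axis and gate depend only on $g$. Second, any prefix giving a reduced literal conjugation must have signature on $[1,g]$ and must land on the axis. Third, the tree geometry forces that signature to equal $p$, and prefix uniqueness follows from injectivity of the prefix lift.

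\emph{Intrinsic determination.} By \cref{eq:min-displacement-axis}, $\mathcal{A}$ is the set of points where the displacement $x\mapsto d_\infty(x,gx)$ attains its minimum, namely $\norm{v}_{\mathrm{Euc}}$. Here $v=g_1$ is read off from $g$. So $\mathcal{A}$ does not depend on the factor $a$ chosen in a factorisation $g=a\e^va^{-1}$. The gate $p$ is the metric projection of $1$ onto this closed convex set, so it is determined by $g$ as well. \Cref{thm:reduction-lemma} already shows that $p$ does not depend on the auxiliary Euclidean norm.

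\emph{Any reduced conjugating prefix has signature $p$.} Suppose $\beta'=\gamma|_{[0,t_1]}$ is a prefix with $\gamma\sim_{\mathrm{rep}}\delta'=\beta'*\lambda_v*\overleftarrow{\beta'}$. Put $p'=S(\beta')$. By \cref{prop:weak-reparametrisation}(b), $\delta'$ is tree-reduced. By \cref{prop:prefix-geodesic}, its prefix trace is therefore the geodesic $[1,g]$, and it traverses the pieces $[1,p']$, then $\{p'\e^{uv}:0\le u\le1\}$, then $g[p',1]$ in order. Chen's identity gives $g=p'\e^vp'^{-1}$, and \cref{lem:straight-signature-line} applied with $a=p'$ shows that $p'\mathcal{A}_v$ is a minimum-displacement axis. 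By uniqueness it equals $\mathcal{A}$, so $p'\in\mathcal{A}\cap[1,g]=[p,gp]$ by \cref{eq:axis-intersection}.

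The remaining task is to exclude $p'\neq p$, and this is where the real work lies. Write $p'=p\e^{sv}$ with $s\in(0,1]$. On the trace, the middle segment runs from $p'$ to $p'\e^{v}=p\e^{(s+1)v}$. But this point is not on $[1,g]$, since the geodesic meets $\mathcal{A}$ only in $[p,gp]=\{p\e^{uv}:0\le u\le1\}$ and $s+1>1$. That contradicts the fact that the prefix trace of the tree-reduced path $\delta'$ lies in $[1,g]$. Hence $s=0$ and $p'=p$.

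\emph{Uniqueness of the prefix.} Suppose two prefix times $t_0\le t_1$ both have prefix signature $p$. Then $S(\gamma|_{[t_0,t_1]})=1$, and condition (c) of \cref{prop:reducedness} makes $\gamma|_{[t_0,t_1]}$ constant. So the two prefixes differ only by a waiting interval, and they are $\sim_{\mathrm{rep}}$-equivalent. The same argument applies after transporting along any weak reparametrisation, using \cref{prop:weak-reparametrisation}(a). Together with the existence statement of \cref{thm:reduction-lemma}, this proves the corollary.
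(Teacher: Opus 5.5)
Your proof is correct and follows the paper's argument. You use the same reduction to $S(\beta')\in\mathcal{A}\cap[1,g]=[p,gp]$ via \cref{eq:min-displacement-axis,eq:axis-intersection}, the same use of tree-reducedness of $\beta'*\lambda_v*\overleftarrow{\beta'}$ from \cref{prop:weak-reparametrisation}(b) to exclude $s>0$, and the same waiting-interval uniqueness. The only variation is in the final contradiction: the paper finds a point of $(gp,gp')$ visited twice by the prefix lift, contradicting injectivity in \cref{prop:reducedness}, whereas you note that $gp'=p\e^{(1+s)v}$ lies on $\mathcal{A}$ outside $[p,gp]$ and hence off $[1,g]$, contradicting \cref{prop:prefix-geodesic} --- a slightly more direct form of the same idea.
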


\begin{proof}
Suppose that $\beta'$ is a based prefix of $\gamma$, put $b=S(\beta')$, and assume
\[
\gamma\sim_{\mathrm{rep}}
\beta'*\lambda_v*\overleftarrow{\beta'}.
\]
Then $g=b\e^v b^{-1}$, so \cref{eq:min-displacement-axis} gives $b\mathcal{A}_v=\mathcal{A}$. Since $b\in[1,g]$, \cref{eq:axis-intersection} gives
\[
b\in[p,gp].
\]
Write $b=p\e^{sv}$ with $s\in[0,1]$. If $s>0$, the middle part of the prefix trace of
$\beta'*\lambda_v*\overleftarrow{\beta'}$ contains the nondegenerate segment $[gp,gb]$, while the final part contains the same segment as
\[
g[p,b]=[gp,gb],
\]
traversed in the opposite direction. Because this path is weakly reparametrisation-equivalent to the tree-reduced path $\gamma$, part~\textup{(b)} of \cref{prop:weak-reparametrisation} makes it tree-reduced. Choose an interior point $q\in(gp,gb)$. During the middle line segment, the prefix-signature lift attains $q$ before reaching $gb$; during the final reversed copy, it attains the same point $q$ again after leaving $gb$. These are distinct prefix times and the intervening trace is nonconstant. This contradicts the injectivity of the waiting-free prefix-signature lift in \cref{prop:reducedness}. Hence $s=0$ and $b=p$.

If two prefix times give the signature $p$, the intervening subpath has trivial signature and is therefore constant by tree-reducedness. The corresponding prefixes differ only by a terminal waiting interval, proving uniqueness under $\sim_{\mathrm{rep}}$.
\end{proof}

\begin{corollary}[Equivalent modified formulations]\label{cor:formulations-equivalent}
Let $V$ be a finite-dimensional real normed space, let $\gamma:[0,T]\to V$ be a continuous tree-reduced bounded-variation path, and put $v=\gamma_T-\gamma_0$. If $v\neq0$, then the following formulations are equivalent:
\begin{enumerate}[label=\textup{(\roman*)}]
\item centred weak path conjugacy to a line in the sense of \cref{def:two-conjugacies};
\item conjugacy to $\e^v$ in the reduced bounded-variation path group;
\item a signature identity $S(\gamma)=a\e^v a^{-1}$ with $a\in\Sl(V)$.
\end{enumerate}
When $v=0$, the corresponding formulations all reduce to the constant reduced path.
\end{corollary}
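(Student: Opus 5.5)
The plan is to prove the three equivalences through the implications (i)$\Rightarrow$(iii)$\Rightarrow$(i), and to treat (ii) as a reformulation of (iii) through the identification of the reduced path group with $\Sl(V)$.

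\emph{(i)$\Rightarrow$(iii).} Suppose $\gamma-\gamma_0\sim_{\mathrm{rep}}\alpha*\lambda_v*\overleftarrow{\alpha}$. Signatures are invariant under translation and under weak increasing reparametrisation (\cref{prop:weak-reparametrisation}(a)). Chen's identity then gives
\[
S(\gamma)=S(\alpha)\,\e^v\,S(\alpha)^{-1},
\]
using $S(\overleftarrow{\alpha})=S(\alpha)^{-1}$. So (iii) holds with $a=S(\alpha)\in\Sl(V)$.

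\emph{(iii)$\Rightarrow$(i).} Translate so that $\gamma_0=0$. The path is still tree-reduced, since reducedness only involves increments of subpaths. This is exactly \cref{thm:reduction-lemma}, direction (a)$\Rightarrow$(b). It produces a prefix $\beta$ of the centred path with $\gamma-\gamma_0\sim_{\mathrm{rep}}\beta*\lambda_v*\overleftarrow{\beta}$. Since $\beta$ is a bounded-variation based path, \cref{def:two-conjugacies} is satisfied.

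\emph{(ii)$\Leftrightarrow$(iii).} Here the reduced bounded-variation path group is the set of tree-like equivalence classes under concatenation. Its product is concatenation followed by reduction, and inverse is reversal. By Hambly--Lyons uniqueness, the map $[\eta]\mapsto S(\eta)$ is a group isomorphism onto $\Sl(V)$: it is well defined and injective because equal signatures mean tree-like equivalence, multiplicative by Chen, and surjective by definition of $\Sl(V)$. The class of $\lambda_v$ maps to $\e^v$. Hence $[\gamma]=[\alpha][\lambda_v][\alpha]^{-1}$ holds if and only if $S(\gamma)=S(\alpha)\e^vS(\alpha)^{-1}$, and every $a\in\Sl(V)$ arises as some $S(\alpha)$. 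This gives (ii)$\Leftrightarrow$(iii).

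\emph{The case $v=0$.} By the paper's convention $\e^0=1$. Formulation (iii) then reads $S(\gamma)=aa^{-1}=1$, and formulation (ii) says $[\gamma]$ is the identity class. In (i), the path is $\alpha*\lambda_0*\overleftarrow{\alpha}$, whose signature is also $1$. In every formulation the path is therefore tree-like. A tree-reduced tree-like path has a trivial-signature restriction on the whole interval, so by \cref{prop:reducedness}(b) it is constant. Conversely, the constant path satisfies all three formulations with $\alpha$ constant.

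The only delicate step is the (ii)$\Leftrightarrow$(iii) identification. There one must check that the notion of reduced path group being used is the one whose multiplication is compatible with Chen's identity. Everything else is either \cref{thm:reduction-lemma} or a direct application of Chen's identity.
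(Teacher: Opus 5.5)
Your proposal is correct and follows the route the paper intends; the corollary is stated there without a separate proof. In your argument, (i)$\Leftrightarrow$(iii) is \cref{thm:reduction-lemma} together with Chen's identity, (ii)$\Leftrightarrow$(iii) is the Hambly--Lyons identification of the reduced path group with $\Sl(V)$, and the case $v=0$ follows from tree-reducedness applied to a path of trivial signature (the paper invokes Hambly--Lyons uniqueness for this last step, which is equivalent).
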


\begin{remark}[Factorising prefixes and the gate]\label{rem:two-prefixes}
Under the hypotheses of \cref{thm:reduction-lemma}, let $p$ be the gate from \cref{cor:canonical-prefix}. Every $q\in[p,gp]$ lies on the invariant line and satisfies
\[
g=q\e^v q^{-1}.
\]
By \cref{eq:conjugacy-geodesic-decomposition,prop:prefix-geodesic}, each such $q$ is attained along the prefix-signature trace. The resonance argument may select any such factorising point, whereas $p$ is the unique one yielding a reduced literal path conjugation, up to waiting intervals and weak increasing reparametrisation.
\end{remark}

\subsection{Factorisation for one finite frequency set}

\begin{proposition}[Finite-frequency prefix factorisation]\label{prop:finite-frequency-factorisation}
Let $V$ be a finite-dimensional real normed vector space with
$\dim V\geq2$, and let
$\gamma\colon[0,T]\to V$ be a continuous bounded-variation path. Set
\[
l=\log S(\gamma),
\qquad
v=\gamma_T-\gamma_0,
\]
and assume that
\[
R(l)=\infty,
\qquad
v\neq0.
\]
Choose $\ell\in V^*$ such that $\ell(v)=1$, put $W=\ker\ell$, and write
\[
\gamma_t-\gamma_0=x_t v+X_t,
\qquad
x_t=\ell(\gamma_t-\gamma_0),
\qquad
X_t\in W.
\]

Let $\Lambda\subset2\pi\Q$ be finite and symmetric, so that
$\Lambda=-\Lambda$. Give $W$ an auxiliary Euclidean norm, let $W_{\C}$
be its complexification, and define the real Hilbert space
\[
E_\Lambda
=
\set{(z_\omega)_{\omega\in\Lambda}
\in\prod_{\omega\in\Lambda}W_{\C}:
 z_{-\omega}=\overline{z_\omega}}
\]
with the norm inherited from the Hilbert direct sum. Define the decorated
transverse path
\[
B_t^\Lambda(\gamma)
=
\left(
\int_0^t \e^{i\omega x_s}\dd X_s
\right)_{\omega\in\Lambda},
\qquad 0\leq t\leq T,
\]
and the orthogonal map
\[
T_\Lambda\colon E_\Lambda\to E_\Lambda,
\qquad
(T_\Lambda z)_\omega=\e^{i\omega}z_\omega.
\]
Denote by the same symbol the induced graded automorphism of tensor
series, and set
\[
r_t^\Lambda
=S\bigl(B^\Lambda(\gamma)|_{[0,t]}\bigr),
\qquad
h_\Lambda=r_T^\Lambda.
\]

Then there exists $t_\Lambda\in[0,T]$ such that
\begin{equation}\label{eq:fixed-prefix}
r_{t_\Lambda}^\Lambda
=h_\Lambda T_\Lambda\bigl(r_{t_\Lambda}^\Lambda\bigr).
\end{equation}
Equivalently, with
\[
a_\Lambda=r_{t_\Lambda}^\Lambda,
\]
one has
\begin{equation}\label{eq:finite-factorisation}
h_\Lambda
=a_\Lambda T_\Lambda(a_\Lambda)^{-1}.
\end{equation}
\end{proposition}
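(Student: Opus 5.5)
We use the cyclic identity of \cref{prop:resonance} to build an isometry of the complete signature tree $(\Sl(E_\Lambda),d_\infty)$ with bounded orbits. We then take a fixed point of that isometry and use the tree structure to move it onto the prefix trace.

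Define $\Psi(x)=h_\Lambda T_\Lambda(x)$. By \cref{lem:prefix-continuity}(a), $T_\Lambda$ is an isometry; left multiplication by $h_\Lambda$ is an isometry by \cref{thm:LDZ}. Hence $\Psi$ is an isometry. Choose $m$ with $m\omega\in2\pi\Z$ for all $\omega\in\Lambda$, so that $T_\Lambda^m=\id$. Since $T_\Lambda$ is a graded automorphism,
\[
\Psi^m(x)=h_\Lambda T_\Lambda(h_\Lambda)\cdots T_\Lambda^{m-1}(h_\Lambda)\,T_\Lambda^m(x)=x
\]
by \cref{eq:cyclic-identity}. Thus $\Psi$ generates a finite cyclic group of isometries. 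Its orbits are finite, hence bounded.

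A finite group of isometries of a complete metric tree has a fixed point, namely the circumcentre of a finite orbit; this is one of the standard facts in \cref{app:metric-tree}. Let $c$ be a fixed point. The remaining task is to produce a fixed point lying on the prefix trace $\set{r_t^\Lambda}$. Note $\Psi(1)=h_\Lambda$, so $\Psi$ maps the geodesic $[1,c]$ isometrically onto $[h_\Lambda,c]$. Let $y$ be the gate of $c$ onto the segment $[1,h_\Lambda]$, i.e.\ the median of $1,h_\Lambda,c$. Then $y$ lies on $[1,c]$ at distance $(d(1,c)+d(c,h_\Lambda)-d(1,h_\Lambda))/2$ from $c$. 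Since $\Psi$ fixes $c$ and swaps the roles of $1$ and $h_\Lambda$ in the sense $d(c,\Psi(1))=d(c,1)$, the point $\Psi(y)$ lies on $[c,h_\Lambda]$ at the same distance from $c$. This distance is less than or equal to the branching distance of $[c,1]$ and $[c,h_\Lambda]$, so both $y$ and $\Psi(y)$ lie on the common initial piece $[c,y]$ of the two geodesics. Being at equal distance from $c$, they coincide: $\Psi(y)=y$ with $y\in[1,h_\Lambda]$. (If $d(1,c)=d(h_\Lambda,c)$ the median computation is exactly symmetric; equality holds automatically since $\Psi$ is an isometry fixing $c$.)

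The main obstacle is that $B^\Lambda(\gamma)$ need not be tree-reduced, so \cref{prop:prefix-geodesic} does not apply directly. Instead we show that the trace $t\mapsto r_t^\Lambda$ is a continuous path from $1$ to $h_\Lambda$ (by \cref{lem:prefix-continuity}(b)). In a metric tree, any continuous path between two points covers the geodesic joining them, since an arc is contained in the image of every path with the same endpoints. Hence $y=r_{t_\Lambda}^\Lambda$ for some $t_\Lambda$, which is \cref{eq:fixed-prefix}. Rearranging $a_\Lambda=h_\Lambda T_\Lambda(a_\Lambda)$ gives \cref{eq:finite-factorisation}. The dimension hypothesis is used only through \cref{prop:resonance}, and the $v\neq0$ hypothesis is needed there as well.
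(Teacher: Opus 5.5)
Your proposal is correct and follows the paper's proof. The cyclic identity of \cref{prop:resonance} makes $p\mapsto h_\Lambda T_\Lambda(p)$ a finite-order isometry of the complete tree $\Sl(E_\Lambda)$ with a fixed point on $[1,h_\Lambda]$, and that geodesic lies in the connected prefix trace. The only variation is cosmetic: you take the median of $1$, $h_\Lambda$ and an arbitrary fixed point $c$, whereas \cref{lem:finite-order-isometry} projects $1$ onto $\Fix(F)$, but both rest on the same equal-distance-from-a-fixed-point argument.
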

\begin{proof}
Apply the construction of \cref{sec:resonance} to the centred path $\gamma-\gamma_0$. This path has the same signature as $\gamma$, and its decorated path is exactly the path $B^\Lambda(\gamma)$ defined above. Choose $m\geq1$ such that
\[
m\omega\in2\pi\Z
\qquad (\omega\in\Lambda).
\]
Then $T_\Lambda^m=\id$, and \cref{prop:resonance} gives
\[
h_\Lambda T_\Lambda(h_\Lambda)\cdots
T_\Lambda^{m-1}(h_\Lambda)=1.
\]
On $\Sl(E_{\Lambda})$, define
\[
F_{\Lambda}(p)=h_{\Lambda}T_{\Lambda}(p).
\]
Since $h_{\Lambda}=S(B^{\Lambda}(\gamma))$ belongs to $\Sl(E_{\Lambda})$, the group property in \cref{thm:LDZ} and the preservation statement in \cref{lem:prefix-continuity}(a) show that $F_{\Lambda}$ is a well-defined self-map of $\Sl(E_{\Lambda})$. Left multiplication is an isometry by \cref{thm:LDZ}; $T_{\Lambda}$ is induced by the orthogonal map \cref{eq:T-Lambda}, so it is an isometry by \cref{lem:prefix-continuity}.
Moreover,
\[
F_{\Lambda}^m(p)
 =h_{\Lambda}T_{\Lambda}(h_{\Lambda})\cdots
  T_{\Lambda}^{m-1}(h_{\Lambda})T_{\Lambda}^m(p)
 =p
\]
by \cref{eq:cyclic-identity}.
By \cref{lem:finite-order-isometry}, $F_{\Lambda}$ has a fixed point $p_{\Lambda}$ on
\[
[1,F_{\Lambda}(1)]=[1,h_{\Lambda}].
\]
The prefix trace $\set{r_t^{\Lambda}:0\leq t\leq T}$ is connected by \cref{lem:prefix-continuity} and contains both endpoints. No reducedness of $B^{\Lambda}(\gamma)$ is required: connectedness of its prefix trace is sufficient. A connected subset of a metric tree contains the geodesic between any two of its points \cite[Section~1]{CullerMorgan1987}; hence the prefix trace contains $[1,h_{\Lambda}]$.
Thus $p_{\Lambda}=r_{t_{\Lambda}}^{\Lambda}$ for some $t_{\Lambda}$, and the fixed-point equation gives \cref{eq:fixed-prefix} and \cref{eq:finite-factorisation}.
\end{proof}

\section{One prefix time for all rational frequencies}\label{sec:common-prefix}

For each finite symmetric $\Lambda$, \cref{prop:finite-frequency-factorisation} supplies a nonempty set of admissible prefix times. Compatibility under frequency projection makes these sets nested, and compactness produces one time $t_*$ valid for every rational frequency set.

\begin{proposition}[Compatible prefix time]\label{prop:common-prefix}
Let $V$ be a finite-dimensional real normed vector space with
$\dim V\geq2$, and let
$\gamma\colon[0,T]\to V$ be a continuous bounded-variation path. Set
\[
l=\log S(\gamma),
\qquad
v=\gamma_T-\gamma_0,
\]
and assume that
\[
R(l)=\infty,
\qquad
v\neq0.
\]
Fix $\ell\in V^*$ such that $\ell(v)=1$, put $W=\ker\ell$, and write
\[
\gamma_t-\gamma_0=x_t v+X_t,
\qquad
x_t=\ell(\gamma_t-\gamma_0),
\qquad
X_t\in W.
\]

Give $W$ an auxiliary Euclidean norm and let $W_{\C}$ be its
complexification. For every finite symmetric set
$\Lambda\subset2\pi\Q$, define the real Hilbert space
\[
E_\Lambda
=
\set{(z_\omega)_{\omega\in\Lambda}
\in\prod_{\omega\in\Lambda}W_{\C}:
 z_{-\omega}=\overline{z_\omega}}
\]
with the norm inherited from the Hilbert direct sum, the decorated
transverse path
\[
B_t^\Lambda(\gamma)
=
\left(
\int_0^t \e^{i\omega x_s}\dd X_s
\right)_{\omega\in\Lambda},
\qquad 0\leq t\leq T,
\]
and the orthogonal map
\[
T_\Lambda\colon E_\Lambda\to E_\Lambda,
\qquad
(T_\Lambda z)_\omega=\e^{i\omega}z_\omega.
\]
Denote by the same symbol the induced graded automorphism of tensor
series, and set
\[
r_t^\Lambda
=S\bigl(B^\Lambda(\gamma)|_{[0,t]}\bigr),
\qquad
h_\Lambda=r_T^\Lambda.
\]

Then there exists $t_*\in[0,T]$ such that, for every finite symmetric
$\Lambda\subset2\pi\Q$,
\begin{equation}\label{eq:common-factorisation}
h_\Lambda
=a_\Lambda T_\Lambda(a_\Lambda)^{-1},
\qquad
a_\Lambda=r_{t_*}^\Lambda.
\end{equation}
Equivalently,
\[
r_{t_*}^\Lambda
=h_\Lambda T_\Lambda\bigl(r_{t_*}^\Lambda\bigr)
\]
for every such $\Lambda$.
\end{proposition}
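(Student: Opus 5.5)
For each finite symmetric $\Lambda\subset2\pi\Q$, let
\[
\mathcal T_\Lambda=\set{t\in[0,T]: r_t^\Lambda=h_\Lambda T_\Lambda(r_t^\Lambda)}.
\]
By \cref{prop:finite-frequency-factorisation}, each $\mathcal T_\Lambda$ is nonempty. We show that these sets are closed and nested, and then take their intersection.

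\emph{Closedness.} The map $t\mapsto r_t^\Lambda$ is continuous into $(\Sl(E_\Lambda),d_\infty)$ by \cref{lem:prefix-continuity}(b). The map $F_\Lambda(p)=h_\Lambda T_\Lambda(p)$ is an isometry, as noted in the proof of \cref{prop:finite-frequency-factorisation}. Hence
\[
t\mapsto d_\infty\bigl(r_t^\Lambda,F_\Lambda(r_t^\Lambda)\bigr)
\]
is continuous, and $\mathcal T_\Lambda$ is its zero set, so it is a closed subset of the compact interval $[0,T]$.

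\emph{Monotonicity.} Let $\Lambda\subset\Lambda'$. Coordinate projection $P:E_{\Lambda'}\to E_\Lambda$, $(z_\omega)_{\omega\in\Lambda'}\mapsto(z_\omega)_{\omega\in\Lambda}$, is real-linear, since both spaces carry the same conjugation constraint and $\Lambda$ is symmetric. It intertwines the rotations:
\[
P\circ T_{\Lambda'}=T_\Lambda\circ P.
\]
It also maps the decorated path to the decorated path:
\[
P\circ B^{\Lambda'}(\gamma)=B^\Lambda(\gamma).
\]
By functoriality of signatures under linear maps, the induced graded algebra homomorphism $P_*$ satisfies $P_*(r_t^{\Lambda'})=r_t^\Lambda$, $P_*(h_{\Lambda'})=h_\Lambda$, and $P_*\circ T_{\Lambda'}=T_\Lambda\circ P_*$. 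Applying the multiplicative map $P_*$ to the fixed-point identity for $\Lambda'$ gives the identity for $\Lambda$. Hence
\[
\mathcal T_{\Lambda'}\subset\mathcal T_\Lambda .
\]
This step is purely algebraic and needs no metric properties of $P_*$.

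\emph{Compactness.} The family $\set{\mathcal T_\Lambda}$ has the finite intersection property. Given finitely many $\Lambda_1,\ldots,\Lambda_k$, the union $\Lambda_1\cup\cdots\cup\Lambda_k$ is again finite and symmetric, and its set of admissible times is nonempty and contained in every $\mathcal T_{\Lambda_j}$. Since each $\mathcal T_\Lambda$ is a closed subset of the compact interval $[0,T]$, the full intersection $\bigcap_\Lambda\mathcal T_\Lambda$ is nonempty. Any $t_*$ in this intersection satisfies the fixed-point identity for every $\Lambda$. The factorisation \cref{eq:common-factorisation} then follows by rearranging $r=h_\Lambda T_\Lambda(r)$ and using that $T_\Lambda$ is a group automorphism.

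\emph{Main point to check.} The delicate step is the compatibility under projection: the intertwining $P\circ T_{\Lambda'}=T_\Lambda\circ P$ and the claim that $P$ restricts correctly to the constrained real spaces $E_{\Lambda'}\to E_\Lambda$. Both follow from the coordinatewise definitions.
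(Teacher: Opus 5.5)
Your proposal is correct and matches the paper's proof step for step. You get nonemptiness from \cref{prop:finite-frequency-factorisation}, closedness from continuity of the prefix lift together with the isometry $p\mapsto h_\Lambda T_\Lambda(p)$, and nesting from the coordinate projection intertwining $T_{\Lambda'}$ and $T_\Lambda$; the only cosmetic difference is that you apply the finite intersection property to all finite symmetric $\Lambda$ at once, whereas the paper intersects along an exhausting nested sequence $\Lambda_n=\set{0,\pm\omega_1,\ldots,\pm\omega_n}$ and then projects, which is the same compactness argument.
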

\begin{proof}
For each finite symmetric $\Lambda\subset2\pi\Q$, set
\begin{equation}\label{eq:Z-Lambda}
\begin{aligned}
Z_{\Lambda}
&=\set{t\in[0,T]:
 r_t^{\Lambda}=h_{\Lambda}T_{\Lambda}(r_t^{\Lambda})}\\
&=\set{t\in[0,T]:
 d_{\infty}\!\left(r_t^{\Lambda},
 h_{\Lambda}T_{\Lambda}(r_t^{\Lambda})\right)=0}.
\end{aligned}
\end{equation}
By \cref{prop:finite-frequency-factorisation}, every $Z_{\Lambda}$ is nonempty. The map
\[
t\longmapsto
 d_{\infty}\!\left(r_t^{\Lambda},
 h_{\Lambda}T_{\Lambda}(r_t^{\Lambda})\right)
\]
is continuous by \cref{lem:prefix-continuity}, because $T_{\Lambda}$ and left multiplication by $h_{\Lambda}$ are isometries. Equation~\eqref{eq:Z-Lambda} therefore exhibits $Z_{\Lambda}$ as the inverse image of the closed set $\set{0}$, so $Z_{\Lambda}$ is closed.

If $\Lambda\subset\Lambda'$, let
\[
P_{\Lambda'\to\Lambda}:E_{\Lambda'}\to E_{\Lambda}
\]
be coordinate projection. Directly from the definition of the decorated paths,
\[
P_{\Lambda'\to\Lambda}B_t^{\Lambda'}(\gamma)
 =B_t^{\Lambda}(\gamma)
\qquad (0\leq t\leq T).
\]
By functoriality of the signature, let $P_{\Lambda'\to\Lambda,*}$ denote the induced graded tensor homomorphism. It satisfies
\[
P_{\Lambda'\to\Lambda,*}(h_{\Lambda'})=h_{\Lambda},
\qquad
P_{\Lambda'\to\Lambda,*}(r_t^{\Lambda'})=r_t^{\Lambda}.
\]
Coordinate projection also intertwines the rotations,
\[
P_{\Lambda'\to\Lambda,*}\circ T_{\Lambda'}
 =T_{\Lambda}\circ P_{\Lambda'\to\Lambda,*}.
\]
Therefore $Z_{\Lambda'}\subset Z_{\Lambda}$.

Enumerate the positive elements of $2\pi\Q$ as $\omega_1,\omega_2,\ldots$ and put
\[
\Lambda_n=\set{0,\pm\omega_1,\ldots,\pm\omega_n}.
\]
The sets $Z_{\Lambda_n}$ form a nested sequence of nonempty closed subsets of the compact interval $[0,T]$.
Their intersection is nonempty.
Choose $t_*$ in the intersection.
Every finite symmetric rational frequency set is contained in some $\Lambda_n$, and projection gives \cref{eq:common-factorisation}.
\end{proof}

\section{Reconstruction of the ordinary signature}\label{sec:reconstruction}

The common time $t_*$ from \cref{prop:common-prefix} defines a conjugate-line comparison path $\widetilde\gamma$. We prove $S(\widetilde\gamma)=S(\gamma)$ by viewing each decorated coefficient as the Fourier transform of a finite Stieltjes measure obtained from an ordered simplex. Equality at all rational frequencies recovers these measures, and polynomial functions of the longitudinal coordinates then reconstruct every mixed signature word in the splitting $V=\R v\oplus W$.

Throughout this section, retain the assumptions and notation of \cref{prop:common-prefix}. By translation invariance, replace the original path by its centred version $\gamma-\gamma_0$ and retain the notation $\gamma$. Thus $\gamma_0=0$; the signature, the decomposition $(x,X)$, the decorated paths, and the common prefix time $t_*$ are unchanged.

Let
\[
\alpha=\gamma|_{[0,t_*]}.
\]
Write $\lambda_v:[0,1]\to V$ for the line segment $\lambda_v(s)=sv$ and define the based reverse
\[
\overleftarrow{\alpha}_s
 =\alpha_{t_*-s}-\alpha_{t_*},
\qquad 0\leq s\leq t_*.
\]
Define the conjugate-line path
\begin{equation}\label{eq:comparison-path}
\widetilde\gamma
 =\alpha*\lambda_v*\overleftarrow{\alpha},
\end{equation}
naturally parametrised on an interval $[0,\widetilde T]$, for example $\widetilde T=2t_*+1$.
All signature and decoration constructions are invariant under orientation-preserving reparametrisation, so the particular concatenation parametrisation is immaterial.
Chen's identity gives
\begin{equation}\label{eq:comparison-signature}
S(\widetilde\gamma)=S(\alpha)\e^vS(\alpha)^{-1}.
\end{equation}

Both $\gamma$ and $\widetilde\gamma$ start at zero and have increment $v$.
For each path $\beta\in\{\gamma,\widetilde\gamma\}$, write on its own parameter interval
\[
\beta_t=x_t^{\beta}v+X_t^{\beta},
\qquad
x_t^{\beta}=\ell(\beta_t),
\qquad
X_t^{\beta}\in W,
\]
and define $B^{\Lambda}(\beta)$ using this decomposition.

\subsection{Equality of all decorated signatures}

\begin{proposition}\label{prop:decorated-equality}
For every finite symmetric $\Lambda\subset 2\pi\Q$,
\[
S(B^{\Lambda}(\widetilde\gamma))
 =S(B^{\Lambda}(\gamma)).
\]
\end{proposition}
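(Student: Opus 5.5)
We compute $S(B^{\Lambda}(\widetilde\gamma))$ directly from the three-piece decomposition \eqref{eq:comparison-path} and compare it with the common-prefix identity \eqref{eq:common-factorisation}. The decorated path construction is local: on each piece of $\widetilde\gamma$ the decorations are $\e^{i\omega x_s}\dd X_s$ with $x_s$ the global longitudinal coordinate of $\widetilde\gamma$. The signature of $B^{\Lambda}(\widetilde\gamma)$ is therefore, by Chen's identity, the product of the signatures of the three decorated pieces. We fix $\Lambda$ and identify each factor in turn.

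\emph{First piece.} On $[0,t_*]$ the path $\widetilde\gamma$ coincides with $\alpha=\gamma|_{[0,t_*]}$, so its decoration coincides with $B^{\Lambda}(\gamma)|_{[0,t_*]}$. Its signature is therefore $a_\Lambda=r^{\Lambda}_{t_*}$.

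\emph{Middle piece.} The segment $\lambda_v$ has zero transverse increment, so $\dd X=0$ there. Its decorated path is constant, with trivial signature, and along it the longitudinal coordinate increases by $\ell(v)=1$, from $x_{t_*}$ to $x_{t_*}+1$.

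\emph{Final piece.} Here $\widetilde\gamma_{t_*+1+s}=\alpha_{t_*}+v+(\alpha_{t_*-s}-\alpha_{t_*})=v+\alpha_{t_*-s}$. Hence the longitudinal coordinate is $1+x_{t_*-s}$ and the transverse part is $X_{t_*-s}$, so the decoration is
\[
\e^{i\omega(1+x_{t_*-s})}\dd_s X_{t_*-s}=\e^{i\omega}\,\e^{i\omega x_{u}}\,\dd X_u,
\qquad u=t_*-s,
\]
traversed backwards in $u$. As a based $E_\Lambda$-path, this is exactly $T_\Lambda$ applied to the based reverse of $B^{\Lambda}(\gamma)|_{[0,t_*]}$. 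Reversal inverts signatures, and $T_\Lambda$ acts functorially as a graded automorphism, so this factor is $T_\Lambda(a_\Lambda)^{-1}$.

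\emph{Conclusion.} Combining the three factors gives
\[
S(B^{\Lambda}(\widetilde\gamma))=a_\Lambda\cdot1\cdot T_\Lambda(a_\Lambda)^{-1}=h_\Lambda=S(B^{\Lambda}(\gamma)),
\]
by \cref{prop:common-prefix}.

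The main obstacle is the bookkeeping in the final piece. One must check that the affine shift $x\mapsto 1+x$ produces precisely the phase $\e^{i\omega}$, consistent with the definition \eqref{eq:T-Lambda} and with the same sign convention used in \cref{prop:resonance}. One must also justify that the Riemann--Stieltjes integral along a reversed parametrisation gives the based reverse, which holds by a change of variables for continuous bounded-variation integrators. Finally, the decoration of a concatenation must use the global coordinate $x$ rather than a restarted one, as it does in the definition of $B^{\Lambda}(\widetilde\gamma)$. The remaining ingredients are Chen's identity and invariance of signatures under orientation-preserving reparametrisation.
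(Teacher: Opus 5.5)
Your proposal is correct and follows the paper's proof step for step: the initial copy of $\alpha$ contributes $a_\Lambda=r^{\Lambda}_{t_*}$, the middle segment has zero transverse differential, and the appended reverse copy is the based reverse of $T_\Lambda B^{\Lambda}(\alpha)$ with signature $T_\Lambda(a_\Lambda)^{-1}$, after which \cref{prop:common-prefix} gives $h_\Lambda$. The bookkeeping points you flag are exactly the ones the paper checks explicitly: the phase $\e^{i\omega}$ from the shift $x\mapsto 1+x$, the change of variables under reversal for continuous bounded-variation integrators, and the use of the global longitudinal coordinate of $\widetilde\gamma$.
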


\begin{proof}
The decorated transverse signature of the initial copy of $\alpha$ is
\[
a_{\Lambda}=r_{t_*}^{\Lambda}.
\]
The middle line has zero transverse differential.
Let
\[
C_s^{\Lambda}
 =B_s^{\Lambda}(\alpha)
 =\left(\int_0^s\e^{i\omega x_u}\dd X_u\right)_{\omega\in\Lambda},
\]
and write $C_{s,\omega}^{\Lambda}$ for its $\omega$-coordinate.
On the appended reverse copy, at reverse time $u\in[0,t_*]$ the point is
\[
v+\alpha_{t_*-u}
 =(1+x_{t_*-u})v+X_{t_*-u}.
\]
For each $\omega\in\Lambda$, the continuous-BV change of variables for reversal gives
\begin{align*}
\int_0^u
 \e^{i\omega(1+x_{t_*-r})}\dd\bigl(X_{t_*-r}\bigr)
&=-\e^{i\omega}\int_{t_*-u}^{t_*}
  \e^{i\omega x_s}\dd X_s\\
&=\e^{i\omega}\bigl(C_{t_*-u,\omega}^{\Lambda}-C_{t_*,\omega}^{\Lambda}\bigr).
\end{align*}
Therefore the full decorated transverse prefix is
\begin{align*}
\left(\int_0^u
 \e^{i\omega(1+x_{t_*-r})}\dd\bigl(X_{t_*-r}\bigr)
\right)_{\omega\in\Lambda}
&=T_{\Lambda}\bigl(C_{t_*-u}^{\Lambda}-C_{t_*}^{\Lambda}\bigr)\\
&=\overleftarrow{\,T_{\Lambda}C^{\Lambda}\,}_u.
\end{align*}
Thus the appended decorated path is exactly the based reverse of $T_{\Lambda}B^{\Lambda}(\alpha)$, and its signature is
\[
S\!\left(\overleftarrow{\,T_{\Lambda}B^{\Lambda}(\alpha)\,}\right)
 =T_{\Lambda}(a_{\Lambda})^{-1}.
\]
By \cref{eq:common-factorisation},
\[
S(B^{\Lambda}(\widetilde\gamma))
 =a_{\Lambda}T_{\Lambda}(a_{\Lambda})^{-1}
 =h_{\Lambda}
 =S(B^{\Lambda}(\gamma)).
\]
\end{proof}

\subsection{Finite Stieltjes measures on ordered simplices}

We use the standard polar decomposition of complex measures and the product-measure and Fubini theorems; see, for example, \cite[Chapters~6 and~8]{RudinRCA1987}.

\begin{lemma}[Atomless Stieltjes product measures]\label{lem:atomless-products}
Let $\nu_1,\ldots,\nu_N$ be finite signed or complex Borel measures on a compact interval $I$, and suppose that each total-variation measure $\abs{\nu_j}$ is atomless.
Then
\[
\abs{\nu_1\otimes\cdots\otimes\nu_N}
=
\abs{\nu_1}\otimes\cdots\otimes\abs{\nu_N}.
\]
Every fixed-coordinate hyperplane $\set{t_i=c}$ and every coordinate-collision hyperplane $\set{t_i=t_j}$ is null for this positive product measure, and hence also for $\nu_1\otimes\cdots\otimes\nu_N$.
Consequently, finite unions of such hyperplanes are null, strict and non-strict ordered-simplex conventions give the same integrals, and coordinate permutations and iterated integrations are justified by Fubini's theorem for finite signed or complex measures.
\end{lemma}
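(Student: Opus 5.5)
The plan has three steps. First, identify the total variation of the product measure. Second, show that the diagonal hyperplanes are null. Third, read off the consequences for ordered-simplex integrals and Fubini.

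\emph{Total variation of the product.} Write each $\nu_j$ in polar form, $\dd\nu_j=h_j\,\dd\abs{\nu_j}$, with $h_j$ a Borel function satisfying $\abs{h_j}=1$ everywhere. The product measure then satisfies
\[
\nu_1\otimes\cdots\otimes\nu_N=(h_1\otimes\cdots\otimes h_N)\,\abs{\nu_1}\otimes\cdots\otimes\abs{\nu_N}.
\]
This holds on measurable rectangles by the definition of product measures. It extends to the product $\sigma$-algebra by the uniqueness of finite measures agreeing on a $\pi$-system, applied to real and imaginary parts, or directly by Fubini. The density $h_1\otimes\cdots\otimes h_N$ has modulus one. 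The total variation of a measure with a unimodular density with respect to a positive measure is that positive measure \cite[Chapter~6]{RudinRCA1987}. This gives the stated identity.

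\emph{Null hyperplanes.} Put $\mu=\abs{\nu_1}\otimes\cdots\otimes\abs{\nu_N}$. For a fixed-coordinate hyperplane,
\[
\mu(\set{t_i=c})=\abs{\nu_i}(\set{c})\prod_{k\neq i}\abs{\nu_k}(I)=0,
\]
since $\abs{\nu_i}$ is atomless. For a collision hyperplane, use Tonelli on the two relevant coordinates:
\[
\mu(\set{t_i=t_j})=\Bigl(\prod_{k\neq i,j}\abs{\nu_k}(I)\Bigr)\int_I\abs{\nu_i}(\set{t_j})\,\dd\abs{\nu_j}(t_j)=0.
\]
Finite subadditivity handles finite unions. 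Since $\abs{\nu(A)}\leq\abs{\nu}(A)$, any $\mu$-null set is also null for the complex product.

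\emph{Consequences.} The closed simplex and the open simplex differ by a subset of finitely many such hyperplanes, so strict and non-strict ordered-simplex conventions give the same integrals. Coordinate permutations are measure-preserving maps between products of the permuted factors. Iterated integration of bounded Borel integrands is Fubini's theorem for the finite positive measure $\mu$, transferred to the complex measure through the bounded density.

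No step is a real obstacle. The only point needing care is the polar-density identity on the full product $\sigma$-algebra, and that is handled by the uniqueness argument above.
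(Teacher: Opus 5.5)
Your proposal is correct and follows the paper's proof essentially step for step. The polar densities multiply to a unimodular density, giving total variation $\abs{\nu_1}\otimes\cdots\otimes\abs{\nu_N}$; Tonelli with atomlessness makes the fixed-coordinate and collision hyperplanes null; and the simplex, permutation and iterated-integration consequences follow from finite unions of null sets and from Fubini applied through the finite total variation.
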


\begin{proof}
By polar decomposition, write $\dd\nu_j=f_j\,\dd\abs{\nu_j}$ with $\abs{f_j}=1$ $\abs{\nu_j}$-almost everywhere.
Then
\[
\dd(\nu_1\otimes\cdots\otimes\nu_N)
 =\prod_{j=1}^N f_j(t_j)\,
  \dd(\abs{\nu_1}\otimes\cdots\otimes\abs{\nu_N}),
\]
which proves the displayed identity of total-variation measures.
For a fixed-coordinate hyperplane, Tonelli's theorem applied to the positive product measure gives a factor $\abs{\nu_i}(\set{c})=0$.
For a collision hyperplane with $i\neq j$, integrate first in the $j$th coordinate: for each fixed value of $t_i$, the corresponding section is the singleton $\set{t_i}$ and has $\abs{\nu_j}$-measure zero.
Tonelli's theorem therefore gives zero again.
The remaining assertions follow because the boundaries separating strict from weak orderings are finite unions of these hyperplanes, while Fubini's theorem applies to every finite signed or complex product measure through its finite total variation.
\end{proof}

Let $\beta:[0,T_{\beta}]\to V$ be a continuous bounded-variation path with $\beta_0=0$ and $\beta_{T_{\beta}}=v$. Using the fixed splitting $V=\R v\oplus W$, write
\[
\beta_t=x_t v+X_t,
\qquad x_t=\ell(\beta_t),
\qquad X_t\in W.
\]
Fix $r\geq 1$ and real covectors $\eta_1,\ldots,\eta_r\in W^*$, and write $\eta=(\eta_1,\ldots,\eta_r)$.
Put
\[
\xi_j(t)=\eta_j(X_t).
\]
Let $\nu_j$ be the finite signed Lebesgue--Stieltjes measure on $[0,T_{\beta}]$ determined by
\[
\nu_j((a,b])=\xi_j(b)-\xi_j(a),
\qquad
\nu_j(\set{0})=0.
\]
Because $\xi_j$ is continuous, $\nu_j$ is atomless. More explicitly, the increasing variation function
\[
q_j(t)=\Var(\xi_j;[0,t])
\]
is continuous, and the Lebesgue--Stieltjes measure induced by $q_j$ is exactly the total-variation measure $\abs{\nu_j}$. Hence $\abs{\nu_j}$ is atomless; in particular, for every $t$,
\begin{equation}\label{eq:variation-atomless}
\abs{\nu_j}(\set{t})=\abs{\nu_j(\set{t})}=0.
\end{equation}

The same observation applies to every scalar coordinate measure used below, including the Stieltjes measure $\nu_x$ induced by $x$.
Thus \cref{lem:atomless-products} applies to every product measure appearing in this section.

Let
\[
\Delta_r(T_{\beta})
 =\set{(t_1,\ldots,t_r)\in[0,T_{\beta}]^r:
 t_1<\cdots<t_r}.
\]
Define the restriction of the finite signed product measure to the strict simplex by
\[
\nu_{\beta}^{\eta}(A)
 = (\nu_1\otimes\cdots\otimes\nu_r)
   \bigl(A\cap\Delta_r(T_{\beta})\bigr)
\]
for Borel $A\subset[0,T_{\beta}]^r$.
Define its pushforward to $\R^r$ by
\begin{equation}\label{eq:pushforward-measure}
\mu_{\beta}^{\eta}
 =(x^{\times r})_*\nu_{\beta}^{\eta},
\qquad
x^{\times r}(t_1,\ldots,t_r)
 =(x_{t_1},\ldots,x_{t_r}).
\end{equation}
Thus, for every bounded Borel function $f$,
\begin{equation}\label{eq:pushforward-integral}
\int_{\R^r} f(y)\dd\mu_{\beta}^{\eta}(y)
 =\int_{\Delta_r(T_{\beta})}
   f(x_{t_1},\ldots,x_{t_r})
   \prod_{j=1}^r\eta_j(\dd X_{t_j}).
\end{equation}
The resulting finite signed Borel measure has total-variation norm
\[
\norm{\mu_{\beta}^{\eta}}_{\mathrm{TV}}
 :=\abs{\mu_{\beta}^{\eta}}(\R^r)
 \leq \prod_{j=1}^r\Var(\xi_j;[0,T_{\beta}]),
\]
and its support lies in the compact set $x([0,T_{\beta}])^r$.

\subsection{Fourier recovery}

For a finite complex Borel measure $\mu$ on $\R^r$, use the Fourier-transform convention
\[
\widehat\mu(\omega)=\int_{\R^r}\e^{i\omega\cdot y}\dd\mu(y),
\qquad \omega\cdot y=\sum_{j=1}^r\omega_jy_j.
\]

\begin{lemma}[Fourier recovery]\label{lem:fourier-recovery}
Let $\beta^{(1)}:[0,T_1]\to V$ and $\beta^{(2)}:[0,T_2]\to V$ be continuous bounded-variation paths, translated to start at zero, with the same nonzero increment $v$.
Fix one functional $\ell\in V^*$ with $\ell(v)=1$, put $W=\ker\ell$, and write
\[
\beta_t^{(k)}=x_t^{(k)}v+X_t^{(k)},
\qquad x_t^{(k)}=\ell(\beta_t^{(k)}),
\qquad X_t^{(k)}\in W
\quad(k=1,2).
\]
Equip $W$ with an auxiliary Euclidean norm, define the decorated paths by \cref{eq:decorated-path}, and define the measures by \cref{eq:pushforward-measure}.
If
\[
S(B^{\Lambda}(\beta^{(1)}))
 =S(B^{\Lambda}(\beta^{(2)}))
\]
for every finite symmetric $\Lambda\subset 2\pi\Q$, then
\[
\mu_{\beta^{(1)}}^{\eta}
 =\mu_{\beta^{(2)}}^{\eta}
\]
for every $r\geq1$ and every tuple $\eta=(\eta_1,\ldots,\eta_r)\in(W^*)^r$.
\end{lemma}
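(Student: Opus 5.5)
The plan is to read off the Fourier transform of $\mu_{\beta}^{\eta}$ at rational frequency vectors as a single signature coefficient of a decorated path. Then Fourier uniqueness, together with a continuity argument that passes from rational to all real frequencies, identifies the two measures.

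First, fix $r\geq1$, $\eta\in(W^*)^r$ and a vector $\omega=(\omega_1,\ldots,\omega_r)\in(2\pi\Q)^r$. Let $\Lambda$ be the finite symmetric set $\set{\pm\omega_1,\ldots,\pm\omega_r}$. Using the coordinate functionals $\zeta_{\omega_j,\eta_j}$ of \cref{lem:decorated-coordinate-separation}, the pairing
\[
\left\langle\zeta_{\omega_1,\eta_1}\otimes\cdots\otimes\zeta_{\omega_r,\eta_r},\,S_r(B^{\Lambda}(\beta))\right\rangle
\]
equals the iterated integral
\[
\int_{0<t_1<\cdots<t_r<T_\beta}\prod_{j}\e^{i\omega_jx_{t_j}}\eta_j(\dd X_{t_j}).
\]
This identity follows by expanding the iterated Riemann--Stieltjes integral of $B^\Lambda(\beta)$ coordinatewise; each coordinate integrator is $\e^{i\omega_j x}\,\dd(\eta_j X)$. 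By \cref{eq:pushforward-integral}, this iterated integral is exactly $\widehat{\mu_\beta^\eta}(\omega)$. Along the way I need one justification: the Riemann--Stieltjes iterated integral of continuous integrands against continuous BV integrators agrees with the integral against the product Stieltjes measure restricted to the strict simplex. I would supply this by induction on $r$, using Fubini and \cref{lem:atomless-products}. The hypothesis $S(B^\Lambda(\beta^{(1)}))=S(B^\Lambda(\beta^{(2)}))$ then gives $\widehat{\mu^{\eta}_{\beta^{(1)}}}(\omega)=\widehat{\mu^{\eta}_{\beta^{(2)}}}(\omega)$ for every $\omega\in(2\pi\Q)^r$.

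Second, extend the equality to all $\omega\in\R^r$. The Fourier transform of a finite complex Borel measure is continuous, by dominated convergence with the finite total variation bound recorded after \cref{eq:pushforward-measure}. Since $(2\pi\Q)^r$ is dense in $\R^r$, the two transforms agree everywhere. Fourier uniqueness for finite measures on $\R^r$ (external input 6) then gives $\mu^{\eta}_{\beta^{(1)}}=\mu^{\eta}_{\beta^{(2)}}$.

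I expect the main obstacle to be the first step: identifying the decorated signature coefficient rigorously with the measure integral. The issues are the strict versus non-strict simplex and the passage from vector-valued Riemann--Stieltjes integrals to product measures. Atomlessness, through \cref{lem:atomless-products} and \cref{eq:variation-atomless}, handles the diagonals. For the iterated structure, I would first show that the Stieltjes measure of the scalar path $t\mapsto\int_0^t \e^{i\omega_jx}\,\dd\xi_j$ is $\e^{i\omega_jx_t}\,\nu_j(\dd t)$, and then induct using Fubini.
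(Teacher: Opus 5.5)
Your proposal is correct and matches the paper's proof essentially step for step. You identify $\widehat{\mu}^{\eta}_{\beta}(\omega)$ at rational $\omega$ with the pairing of $\zeta_{\omega_1,\eta_1}\otimes\cdots\otimes\zeta_{\omega_r,\eta_r}$ against the complexified decorated signature, extend the equality to all of $\R^r$ by continuity and density, and conclude by Fourier uniqueness; your extra justification that iterated Riemann--Stieltjes integrals agree with the product-measure integrals on the strict simplex is a detail the paper takes as part of its standing interpretation of signature integrals, so it adds rigour without changing the route.
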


\begin{proof}
For each $k\in\{1,2\}$, \cref{eq:pushforward-integral} gives
\begin{equation}\label{eq:fourier-coordinate}
\widehat\mu_{\beta^{(k)}}^{\eta}(\omega_1,\ldots,\omega_r)
 =\int_{\Delta_r(T_k)}
   \prod_{j=1}^r
   \e^{i\omega_jx_{t_j}^{(k)}}
   \eta_j(\dd X_{t_j}^{(k)}).
\end{equation}
If all $\omega_j\in2\pi\Q$, choose a finite symmetric $\Lambda$ containing them. The signatures lie in the real tensor algebra over $E_{\Lambda}$, and equality there remains valid after complexification. They may therefore be paired with the complex-linear coordinate functionals $\zeta_{\omega,\eta}$ from \cref{lem:decorated-coordinate-separation} and their tensor products. Under this pairing, \cref{eq:fourier-coordinate} is exactly the corresponding signature coordinate. The hypothesis therefore gives equality of the Fourier transforms on the dense set $(2\pi\Q)^r$.
Fourier transforms of finite measures are continuous, so they agree on all of $\R^r$.
Fourier uniqueness for finite complex measures gives equality of the measures.
\end{proof}

\subsection{Recovery of every mixed word}

\medskip
\noindent\textbf{No monotonicity assumption.}
The scalar coordinate $x$ need not be monotone. The reduction of each consecutive $\dd x$-block to a power of its net increment follows only from continuity, atomlessness of the Stieltjes total-variation measure, and permutation symmetry of the product measure.

\begin{lemma}[One-dimensional block integral]\label{lem:block-integral}
Let $x:[0,T]\to\R$ be continuous and of bounded variation, and let $0\leq a<b\leq T$.
For $n\geq 0$,
\begin{equation}\label{eq:block-integral}
\int_{a<s_1<\cdots<s_n<b}
\dd x_{s_1}\cdots\dd x_{s_n}
 =\frac{(x_b-x_a)^n}{n!}.
\end{equation}
\end{lemma}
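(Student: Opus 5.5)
The plan is to reduce the identity to the symmetry of an atomless product measure, exactly as the preceding remark announces; no monotonicity of $x$ is used. The case $n=0$ is the convention that the empty integral equals $1$, so assume $n\geq1$. Let $\nu_x$ be the finite signed Lebesgue--Stieltjes measure on $[0,T]$ induced by $x$. By continuity of $x$ its total variation $\abs{\nu_x}$ is atomless, as recorded in \cref{eq:variation-atomless} and the sentence after it. Interpret the left-hand side of \cref{eq:block-integral} as
\[
\nu_x^{\otimes n}\bigl(\set{(s_1,\ldots,s_n)\in(a,b)^n:s_1<\cdots<s_n}\bigr).
\]
This agrees with the Riemann--Stieltjes iterated integral, for the same reason already used to identify signature levels with product measures.

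\textbf{Step 1: the full cube.} The product measure of $(a,b)^n$ equals $\nu_x((a,b))^n$. Since $\nu_x$ has no atoms, $\nu_x((a,b))=\nu_x((a,b])=x_b-x_a$.

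\textbf{Step 2: decomposition by orderings.} Decompose $(a,b)^n$ into the $n!$ strict ordering regions
\[
\set{s_{\pi(1)}<\cdots<s_{\pi(n)}}, \qquad \pi\in\mathfrak S_n,
\]
together with the union of the collision hyperplanes $\set{s_i=s_j}$. By \cref{lem:atomless-products}, that finite union of hyperplanes is null for $\nu_x^{\otimes n}$. Hence
\[
(x_b-x_a)^n=\sum_{\pi\in\mathfrak S_n}\nu_x^{\otimes n}(\text{region}_\pi).
\]

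\textbf{Step 3: equality of the ordering regions.} The product measure $\nu_x^{\otimes n}$ is invariant under coordinate permutations, because all factors coincide. This is justified by Fubini for finite signed measures, again via \cref{lem:atomless-products}. The permutation $\pi$ maps the standard ordered simplex onto the region for $\pi$, so all $n!$ regions have the same measure. Dividing by $n!$ gives \cref{eq:block-integral}.

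The only delicate point is the null-set claim for the diagonals in Step 2. If it failed, atoms of $\nu_x$ would produce corrections and the identity would break. Here it is supplied by atomlessness of $\abs{\nu_x}$, and this is the step I expect to need the most care; everything else is bookkeeping.
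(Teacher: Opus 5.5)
Your proposal is correct and follows essentially the same route as the paper: atomlessness of $\abs{\nu_x}$ makes the collision hyperplanes null (via \cref{lem:atomless-products}), the $n!$ strict orderings then partition $(a,b)^n$ up to a null set, permutation invariance of $\nu_x^{\otimes n}$ gives them equal mass, and the absence of endpoint atoms gives $\nu_x((a,b))=x_b-x_a$. One small point of attribution: permutation invariance of $\nu_x^{\otimes n}$ is more naturally justified by the two measures agreeing on measurable rectangles, which form a $\pi$-system, plus Dynkin's theorem, than by Fubini, but either justification works.
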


\begin{proof}
Let $\nu_x$ be the signed Stieltjes measure induced by $x$.
By \cref{lem:atomless-products}, all diagonals in $(a,b)^n$ are $\abs{\nu_x}^{\otimes n}$-null.
For $n\geq 1$, the $n!$ strict coordinate orderings of $(a,b)^n$ therefore partition the product domain up to a null set.
The product measure $\nu_x^{\otimes n}$ is invariant under coordinate permutations, so every strict ordering has the same mass, namely $1/n!$ of the total mass of $(a,b)^n$.
Because there are no endpoint atoms,
\[
\nu_x((a,b))=x_b-x_a.
\]
This proves \cref{eq:block-integral}.
The case $n=0$ is interpreted as $1$.
\end{proof}

\begin{lemma}[Mixed-word formula]\label{lem:mixed-word}
Let $\beta:[0,T_{\beta}]\to V$ be a continuous bounded-variation path with $\beta_0=0$ and $\beta_{T_{\beta}}=v\neq0$. Fix $\ell\in V^*$ with $\ell(v)=1$, put $W=\ker\ell$, and write
\[
\beta_t=x_tv+X_t,
\qquad x_t=\ell(\beta_t),
\qquad X_t\in W.
\]
Let $r\geq1$, let $n_0,\ldots,n_r\geq 0$, and let $\eta_1,\ldots,\eta_r\in W^*$, extended to $V^*$ by $\eta_j(v)=0$. Set $N=r+\sum_{j=0}^r n_j$. The canonical pairing of the covector word with the degree-$N$ signature level is
\begin{equation}\label{eq:mixed-word}
\begin{aligned}
&\left\langle
\ell^{\otimes n_0}\otimes\eta_1\otimes
\ell^{\otimes n_1}\otimes\cdots\otimes
\eta_r\otimes\ell^{\otimes n_r},
S_N(\beta)
\right\rangle\\
&\quad=\int_{\Delta_r(T_{\beta})}
\frac{x_{t_1}^{n_0}}{n_0!}
\prod_{j=1}^{r-1}
\frac{(x_{t_{j+1}}-x_{t_j})^{n_j}}{n_j!}
\frac{(1-x_{t_r})^{n_r}}{n_r!}
\prod_{j=1}^r\eta_j(\dd X_{t_j}).
\end{aligned}
\end{equation}
\end{lemma}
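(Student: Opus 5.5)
We plan to expand the pairing as an iterated Stieltjes integral over the full simplex $\Delta_N(T_\beta)$ and then integrate out the $\ell$-blocks with \cref{lem:block-integral}. Because $\ell(\dd\beta_t)=\dd x_t$ and $\eta_j(\dd\beta_t)=\eta_j(\dd X_t)$ (using $\eta_j(v)=0$), the left-hand side of \cref{eq:mixed-word} equals
\[
\int_{0<s_1<\cdots<s_N<T_\beta}\prod_{k=1}^N\theta_k(\dd\beta_{s_k}),
\]
where $\theta_1\otimes\cdots\otimes\theta_N$ is the given covector word. This is the integral of the finite signed product measure of the scalar coordinate measures (copies of $\nu_x$ and $\nu_j$) over the strict simplex. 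All these measures are atomless, so \cref{lem:atomless-products} lets us ignore boundary hyperplanes and apply Fubini freely.

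Next, label the positions of the $r$ transverse letters by $t_1<\cdots<t_r$, and set $t_0=0$, $t_{r+1}=T_\beta$. The simplex then decomposes, up to null sets, as follows: for each fixed $(t_1,\ldots,t_r)\in\Delta_r(T_\beta)$, the $n_j$ longitudinal variables of the $j$th block range over the ordered simplex in $(t_j,t_{j+1})$. Fubini lets us integrate these inner blocks first, with the outer variables fixed.

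By \cref{lem:block-integral}, the $j$th block contributes $(x_{t_{j+1}}-x_{t_j})^{n_j}/n_j!$. Using $x_0=0$ and $x_{T_\beta}=\ell(v)=1$, the first block gives $x_{t_1}^{n_0}/n_0!$ and the last gives $(1-x_{t_r})^{n_r}/n_r!$. Integrating the resulting bounded continuous function of $(t_1,\ldots,t_r)$ against $\nu_1\otimes\cdots\otimes\nu_r$ restricted to $\Delta_r(T_\beta)$ yields \cref{eq:mixed-word}.

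The main care point is the Fubini step. The inner region depends on the outer variables, so the iterated integral must be written as an integral over a measurable set in the product space, and the block integrals must be taken over open intervals $(t_j,t_{j+1})$. Atomlessness makes the choice between open and closed intervals irrelevant, and it also removes any double counting at coinciding times. Finiteness of the total variations justifies the interchange of integrals.
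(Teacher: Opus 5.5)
Your proposal is correct and follows essentially the same route as the paper. The paper also writes the coefficient as the strict-simplex mass of the product measure in word order, invokes \cref{lem:atomless-products} to discard boundary hyperplanes, and applies Fubini with the transverse variables outermost. It then evaluates each longitudinal block by \cref{lem:block-integral} and substitutes $x_0=0$ and $x_{T_\beta}=1$.
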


\begin{proof}
Let $\nu_x$ be the finite signed Stieltjes measure induced by $x$, and let $\nu_j$ be the signed Stieltjes measure induced by $t\mapsto\eta_j(X_t)$.
For each $j$, introduce longitudinal variables $s_{j,1},\ldots,s_{j,n_j}$, omitting the block when $n_j=0$.
In the coordinate order prescribed by the covector word, define the finite signed product measure
\[
M=\nu_x^{\otimes n_0}\otimes\nu_1\otimes
  \nu_x^{\otimes n_1}\otimes\cdots\otimes
  \nu_r\otimes\nu_x^{\otimes n_r}
\]
on $[0,T_\beta]^N$.
The desired signature coefficient is $M(D)$, where $D$ is the strict-order domain
\begin{align*}
0&<s_{0,1}<\cdots<s_{0,n_0}<t_1
 <s_{1,1}<\cdots<s_{1,n_1}<t_2<\cdots\\
&\hspace{7em}<t_r<s_{r,1}<\cdots<s_{r,n_r}<T_\beta,
\end{align*}
with empty strings omitted.

All factors have finite total variation and atomless total variation.
By \cref{lem:atomless-products}, the boundary hyperplanes are $\abs{M}$-null, strict versus non-strict boundary conventions do not affect $M(D)$, and Fubini may be applied without boundary terms.

Permute coordinates so that the transverse variables $t_1,\ldots,t_r$ are outer variables.
For $t=(t_1,\ldots,t_r)\in\Delta_r(T_\beta)$, set $t_0=0$ and $t_{r+1}=T_\beta$.
The section of $D$ in the longitudinal variables is the Cartesian product of the ordered simplexes
\[
\set{t_j<s_{j,1}<\cdots<s_{j,n_j}<t_{j+1}},
\qquad 0\leq j\leq r.
\]
Fubini's theorem therefore gives the coefficient as
\begin{equation}\label{eq:mixed-word-fubini}
\int_{\Delta_r(T_\beta)}
 \prod_{j=0}^r
 \left(
  \int_{t_j<s_1<\cdots<s_{n_j}<t_{j+1}}
  \dd x_{s_1}\cdots\dd x_{s_{n_j}}
 \right)
 \prod_{k=1}^r\eta_k(\dd X_{t_k}),
\end{equation}
where an inner integral with $n_j=0$ equals $1$.
By \cref{lem:block-integral}, the $j$th inner factor is
\[
\frac{(x_{t_{j+1}}-x_{t_j})^{n_j}}{n_j!}.
\]
Since $x_{t_0}=x_0=0$ and $x_{t_{r+1}}=x_{T_\beta}=1$, substituting these factors into \cref{eq:mixed-word-fubini} gives \cref{eq:mixed-word}.
The argument requires only continuity and bounded variation of $x$.
\end{proof}

\paragraph{Worked reconstruction example.}
Take $r=2$, $n_0=1$, $n_1=2$, and $n_2=0$ in \cref{lem:mixed-word}. Then $N=5$ and
\begin{equation}\label{eq:worked-reconstruction}
\begin{aligned}
&\left\langle
\ell\otimes\eta_1\otimes\ell^{\otimes2}\otimes\eta_2,
S_5(\beta)
\right\rangle\\
&\qquad=
\int_{0<t_1<t_2<T_\beta}
 x_{t_1}\,
 \frac{(x_{t_2}-x_{t_1})^2}{2}
 \,\eta_1(\dd X_{t_1})\eta_2(\dd X_{t_2}).
\end{aligned}
\end{equation}
The longitudinal letter before $\eta_1$ contributes
\[
\int_{0<s<t_1}\dd x_s=x_{t_1},
\]
while the two longitudinal letters between $\eta_1$ and $\eta_2$ contribute, by \cref{lem:block-integral},
\[
\int_{t_1<s_1<s_2<t_2}\dd x_{s_1}\dd x_{s_2}
 =\frac{(x_{t_2}-x_{t_1})^2}{2}.
\]
The final block is empty and contributes $1$. By the pushforward identity \cref{eq:pushforward-integral}, \cref{eq:worked-reconstruction} is equivalently
\[
\int_{\R^2}
 y_1\frac{(y_2-y_1)^2}{2}
 \,\dd\mu_\beta^{(\eta_1,\eta_2)}(y_1,y_2).
\]
Thus equality of the measures $\mu_\beta^{(\eta_1,\eta_2)}$ recovers this signature coefficient directly. No monotonicity of $x$ is used.

\begin{theorem}[Decorated signatures determine the ordinary signature]\label{thm:decorated-determines}
Let $\beta^{(1)}:[0,T_1]\to V$ and $\beta^{(2)}:[0,T_2]\to V$ be continuous bounded-variation paths, translated to start at zero, with the same nonzero increment $v$.
Fix $\ell\in V^*$ with $\ell(v)=1$, put $W=\ker\ell$, equip $W$ with an auxiliary Euclidean norm, and define the decompositions and decorated paths as in \cref{lem:fourier-recovery}.
If
\[
S(B^{\Lambda}(\beta^{(1)}))
 =S(B^{\Lambda}(\beta^{(2)}))
\]
for every finite symmetric $\Lambda\subset 2\pi\Q$, then
\[
S(\beta^{(1)})=S(\beta^{(2)}).
\]
\end{theorem}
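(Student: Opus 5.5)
The plan is to show that every coordinate of $S_N(\beta^{(k)})$ agrees for $k=1,2$, by testing against a spanning family of covector words. Since $V=\R v\oplus W$ and $\ell$ together with $W^*$ (extended by $\eta(v)=0$) spans $V^*$, the dual of $V^{\otimes N}$ is spanned by words in the letters $\ell$ and $\eta\in W^*$. Every such word has the form $\ell^{\otimes n_0}\otimes\eta_1\otimes\ell^{\otimes n_1}\otimes\cdots\otimes\eta_r\otimes\ell^{\otimes n_r}$ with $r\geq0$. It therefore suffices to check equality of the pairings for every such word.

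The case $r=0$ is the pure word $\ell^{\otimes N}$. Its pairing with $S_N(\beta^{(k)})$ is the iterated integral of $\dd x$ over the full simplex. By \cref{lem:block-integral} with $a=0$, $b=T_k$, this equals $(x_{T_k}-x_0)^N/N!=1/N!$ for both paths, since both start at $0$ and end at $v$.

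For $r\geq1$, \cref{lem:mixed-word} writes the pairing as an integral over $\Delta_r(T_k)$ of the polynomial
\[
P(y)=\frac{y_1^{n_0}}{n_0!}\prod_{j=1}^{r-1}\frac{(y_{j+1}-y_j)^{n_j}}{n_j!}\,\frac{(1-y_r)^{n_r}}{n_r!}
\]
evaluated at $(x_{t_1},\ldots,x_{t_r})$, integrated against $\prod_j\eta_j(\dd X_{t_j})$. By \cref{eq:pushforward-integral}, this is $\int_{\R^r}P\,\dd\mu^{\eta}_{\beta^{(k)}}$. The pushforward identity is stated for bounded Borel $f$, and $P$ is unbounded on $\R^r$. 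However, each $\mu^{\eta}_{\beta^{(k)}}$ is supported in the compact set $x^{(k)}([0,T_k])^r$. So I would replace $P$ by $P\chi$, where $\chi$ is a continuous compactly supported cutoff equal to $1$ on the union of the two supports. This gives a bounded Borel function whose integral against either measure equals the integral of $P$. Then \cref{lem:fourier-recovery} gives $\mu^{\eta}_{\beta^{(1)}}=\mu^{\eta}_{\beta^{(2)}}$, so the two pairings coincide.

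The only subtle points are these two: that the word family spans the dual of $V^{\otimes N}$, and the boundedness/support justification above. The substantive work, namely the Fourier uniqueness and the block reduction without monotonicity of $x$, is already contained in \cref{lem:fourier-recovery,lem:mixed-word}. With all levels equal, $S(\beta^{(1)})=S(\beta^{(2)})$.
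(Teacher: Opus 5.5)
Your proposal is correct and matches the paper's proof: you test against covector words in $\ell$ and $W^*$, handle the purely longitudinal word with the block integral, and write each mixed word as a polynomial moment of the pushforward measures, which agree by Fourier recovery. Your compactly supported cutoff $\chi$, used so that the pushforward identity (stated for bounded Borel $f$) applies to the unbounded polynomial $P$, is a valid extra point of rigour that the paper leaves implicit; it works because both measures have compact support.
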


\begin{proof}
Let $d=\dim V$ and choose a basis $\eta^1,\ldots,\eta^{d-1}$ of $W^*$, extended by $\eta^k(v)=0$.
Then
\[
\set{\ell,\eta^1,\ldots,\eta^{d-1}}
\]
is a basis of $V^*$, and its tensor words form a basis of every $(V^*)^{\otimes n}$.

Every tensor word in the displayed basis is of exactly one of the following two types. It is either purely longitudinal, or, after listing its $r\geq1$ transverse letters in their original order as $\eta_1,\ldots,\eta_r$, it has the unique block decomposition
\[
\ell^{\otimes n_0}\otimes\eta_1\otimes
\ell^{\otimes n_1}\otimes\cdots\otimes
\eta_r\otimes\ell^{\otimes n_r},
\qquad n_0,\ldots,n_r\geq0.
\]
Thus \cref{lem:mixed-word} covers every non-purely-longitudinal basis word, including adjacent transverse letters through blocks with $n_j=0$.

By \cref{lem:fourier-recovery}, the measures $\mu_{\beta^{(1)}}^{\eta}$ and $\mu_{\beta^{(2)}}^{\eta}$ agree for every tuple of transverse covectors. For fixed $n_0,\ldots,n_r$, put $N=r+\sum_{j=0}^r n_j$ and define
\[
P_{n_0,\ldots,n_r}(y_1,\ldots,y_r)
 =\frac{y_1^{n_0}}{n_0!}
  \prod_{j=1}^{r-1}
  \frac{(y_{j+1}-y_j)^{n_j}}{n_j!}
  \frac{(1-y_r)^{n_r}}{n_r!},
\]
where the empty product is interpreted as $1$. For either $\beta\in\set{\beta^{(1)},\beta^{(2)}}$, \cref{eq:pushforward-integral,eq:mixed-word} give
\[
\begin{aligned}
&\left\langle
 \ell^{\otimes n_0}\otimes\eta_1\otimes
 \ell^{\otimes n_1}\otimes\cdots\otimes
 \eta_r\otimes\ell^{\otimes n_r},
 S_N(\beta)
\right\rangle\\
&\hspace{35mm}
 =\int_{\R^r}P_{n_0,\ldots,n_r}(y)\,
   \dd\mu_{\beta}^{\eta}(y).
\end{aligned}
\]
Equality of the pushforward measures therefore gives equality of every mixed-word coefficient for the two paths.
The purely longitudinal case corresponds to $r=0$; both paths have longitudinal increment $\ell(v)=1$, and \cref{lem:block-integral} gives $1/n!$ at degree $n$.
The tensor words in the displayed basis separate every tensor level, so the full signatures agree.
\end{proof}

\section{Proof of the classification theorem}

We now assemble the preceding results to prove \cref{thm:main}.

\subsection{The nonzero-increment implication}

Retain the notation $\overline\gamma$, $g$, $l$, and $v$ from \cref{thm:main}, and assume $v\neq0$. By translation invariance, replace $\gamma$ by the centred path $\overline\gamma$ and again denote it by $\gamma$; then $\gamma_0=0$, while $g$, $l$, and $v$ are unchanged. If $\dim V=1$, \cref{rem:one-dimensional} gives $S(\gamma)=\e^v$, and the conclusion holds with $t_*=0$. Hence assume $\dim V\geq2$.

By \cref{prop:common-prefix}, there is a time $t_*$ and a prefix $\alpha=\gamma|_{[0,t_*]}$. Define $\widetilde\gamma$ by \cref{eq:comparison-path}. By \cref{prop:decorated-equality,thm:decorated-determines},
\[
S(\gamma)=S(\widetilde\gamma).
\]
Using \cref{eq:comparison-signature},
\[
S(\gamma)=S(\alpha)\e^vS(\alpha)^{-1}.
\]
Translating back, $\alpha$ is the prefix of the centred path required in \cref{thm:main}. The zero-increment conclusion is \cref{thm:loop-rigidity}.

\subsection{The converse implication}

\begin{proposition}[Conjugacy to a line gives an entire logarithm]\label{prop:converse}
Let $V$ be a finite-dimensional real normed space, let $\alpha$ be a based continuous bounded-variation $V$-valued path, put $a=S(\alpha)$, and let $v\in V$. Then
\[
g=a\e^v a^{-1}
\]
has an entire logarithmic signature.
\end{proposition}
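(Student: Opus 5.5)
The plan is to reduce the statement to \cref{prop:conjugacy-invariance}, applied with the middle path taken to be the line segment. Put $X=\lambda_v$, the based line segment with increment $v$, and $Y=\alpha$. Chen's identity gives $S(\lambda_v)=\e^v$, and $\log S(\lambda_v)=v$. This series is concentrated in degree one, so $\norm{(\log S(\lambda_v))_n}_{\pi,n}=0$ for all $n\geq2$. The $\limsup$ in the definition of $R$ is therefore $0$, and $R(v)=\infty$ with the convention $0^{-1}=\infty$. This covers $v=0$ as well: there $g=1$, the logarithm is $0$, and the claim is trivial.

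Next I apply \cref{prop:conjugacy-invariance} with $X=\lambda_v$ and $Y=\alpha$. It gives
\[
\log S(\alpha*\lambda_v*\overleftarrow{\alpha})=a\,v\,a^{-1},
\]
and it says this element has infinite homogeneous radius because $v$ does. By Chen's identity and $S(\overleftarrow{\alpha})=a^{-1}$,
\[
S(\alpha*\lambda_v*\overleftarrow{\alpha})=a\e^v a^{-1}=g,
\]
so $\log g=ava^{-1}$ is entire.

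Alternatively, the estimate can be done by hand. Write $A=L(\alpha)$. The factorial bounds \eqref{eq:factorial-signature} for $a$ and $a^{-1}$ are both at most $A^p/p!$. Submultiplicativity of the projective norm then gives
\[
\norm{(ava^{-1})_n}_{\pi,n}\leq\norm{v}\,\frac{(2A)^{n-1}}{(n-1)!},
\]
whose $n$th root tends to $0$. This is the same bound as the three-segment example in \cref{sec:conjugacy-normal-form}.

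There is no genuine obstacle here. The only points to check are that $\overleftarrow{\alpha}$ is again a based continuous bounded-variation path with signature $a^{-1}$, and that the formal identity $a\e^v a^{-1}=\e^{ava^{-1}}$ holds in the completed tensor algebra. The latter holds because conjugation by the invertible element $a$ is a continuous algebra automorphism that commutes with the exponential series degreewise.
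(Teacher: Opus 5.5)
Your proposal is correct and matches the paper. The paper's proof is your ``by hand'' route: realise $g$ as $S(\alpha*\lambda_v*\overleftarrow{\alpha})$, use $\log g=ava^{-1}$, and bound the degree-$n$ component by $\norm{v}(2A)^{n-1}/(n-1)!$. Your primary reduction to \cref{prop:conjugacy-invariance} with $X=\lambda_v$ is that same estimate in general form; note that $S(\lambda_v)=\e^v$ is a direct line-segment computation, not Chen's identity.
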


\begin{proof}
By Chen's identity,
\[
g=S(\alpha*\lambda_v*\overleftarrow{\alpha}),
\]
so $g$ is itself the signature of a bounded-variation path. In the completed graded algebra,
\[
a\e^v a^{-1}=\e^{a v a^{-1}},
\]
so
\[
\log g=a v a^{-1}=\Ad_a v.
\]
Let $A=L(\alpha)$. If $A=0$, then $\alpha$ is constant, $a=1$, and $\log g=v$, so the conclusion is immediate. Assume henceforth that $A>0$.
The inverse $a^{-1}$ is the signature of the reversed path and satisfies the same factorial estimate as $a$.
Writing $a_p$ and $(a^{-1})_q$ for homogeneous components, the degree-$n$ component is
\[
(\log g)_n
 =\sum_{\substack{p,q\geq0\\p+q=n-1}} a_p\otimes v\otimes(a^{-1})_q
\qquad(n\geq1).
\]
Therefore
\begin{align*}
\norm{(\log g)_n}_{\pi,n}
&\leq \norm{v}
  \sum_{\substack{p,q\geq0\\p+q=n-1}}\frac{A^p}{p!}\frac{A^q}{q!}\\
&=\norm{v}\,\frac{(2A)^{n-1}}{(n-1)!}.
\end{align*}
The $n$th roots tend to zero, so the homogeneous radius is infinite.
\end{proof}

The preceding sections prove the signature-level assertions in \cref{thm:main}. If $\gamma$ is tree-reduced, \cref{thm:reduction-lemma} converts the factorisation into the path identity \cref{eq:path-level-main} modulo weak increasing reparametrisation; in the zero-increment case, Hambly--Lyons uniqueness makes the tree-reduced path constant. The converse follows from \cref{prop:converse}. This proves \cref{cor:modified-conjecture}.

\section{Conclusion}\label{sec:conclusion}

The classification identifies an analytically defined locus in the rectifiable signature group with a simple conjugacy geometry:
\[
R(\log g)=\infty
\quad\Longleftrightarrow\quad
 g=a\e^v a^{-1}.
\]
Here $v=g_1$ is the degree-one component of $g$, and the zero-first-level fibre consists only of the identity. For tree-reduced representatives, minimum-displacement axis geometry turns the algebraic normal form into the corresponding path decomposition.

The proof uses two mechanisms. The spectral-growth theorem converts exponential-type control of a matrix exponential into polynomial degree control of the characteristic invariants of its entire logarithm. The remaining argument closes resonant finite-dimensional identities through tree fixed points, compactness, and Fourier reconstruction. Bounded variation enters at two points: it supplies finite atomless Stieltjes measures and places the signature in the rectifiable complete metric tree. An extension to general weakly geometric rough paths would require substitutes for both mechanisms.

\appendix

\section{Metric-tree facts}\label{app:metric-tree}

We record the standard complete-real-tree facts used in \cref{sec:tree-factorisation}; see also \cite[Section~1]{CullerMorgan1987}. For points $x,y$ in a metric tree, $[x,y]$ denotes their unique geodesic segment, and for a nonempty subset $C$ we write $d(x,C)=\inf_{c\in C}d(x,c)$.

\begin{lemma}[Projection and gate decomposition]\label{lem:projection-gates}
Let $(\mathscr{T},d)$ be a complete metric tree and let $C\subset\mathscr{T}$ be nonempty, closed, and convex. Every $x\in\mathscr{T}$ has a unique metric projection $p\in C$. Moreover, $p\in[x,c]$ for every $c\in C$. If $p$ and $q$ are the projections of $x$ and $y$, respectively, and $p\neq q$, then
\begin{equation}\label{eq:gate-decomposition}
[x,y]=[x,p]\cup[p,q]\cup[q,y],
\end{equation}
where consecutive pieces meet only at their common endpoint and $[x,p]\cap[q,y]=\varnothing$.
\end{lemma}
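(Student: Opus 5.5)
Existence and uniqueness of the projection come from completeness, convexity and the tree (CAT$(0)$ / $0$-hyperbolic) structure; the gate decomposition then comes from a branch-point argument.

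\emph{Existence.} Set $\delta=d(x,C)$ and take $c_n\in C$ with $d(x,c_n)\to\delta$. In a real tree, $[x,c_n]\cap[x,c_m]=[x,y_{nm}]$ for the branch point $y_{nm}$. Convexity of $C$ puts $y_{nm}\in[c_n,c_m]\subset C$, so $d(x,y_{nm})\geq\delta$. Hence
\[
d(c_n,c_m)=d(x,c_n)+d(x,c_m)-2d(x,y_{nm})\leq d(x,c_n)+d(x,c_m)-2\delta\to0.
\]
So $(c_n)$ is Cauchy. Completeness gives a limit, and closedness of $C$ puts it in $C$; this limit $p$ satisfies $d(x,p)=\delta$.

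\emph{Uniqueness.} For two minimisers $p,p'$, the same identity gives $d(p,p')\leq 2\delta-2\delta=0$.

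\emph{Geodesic property.} For $c\in C$, let $y$ be the branch point of $x,p,c$, i.e.\ the common point of $[x,p]$, $[x,c]$ and $[p,c]$. Then $y\in[p,c]\subset C$ and $d(x,y)\leq d(x,p)$. By uniqueness $y=p$, so $p\in[x,c]$.

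\emph{Gate decomposition.} Assume $p\neq q$. Apply the geodesic property with $c=q$ to get $p\in[x,q]$; symmetrically $q\in[y,p]$. Also $[p,q]\subset C$, and $[x,p]\cap C=\{p\}$, since any other point of $[x,p]$ lying in $C$ would be strictly closer to $x$. Likewise $[y,q]\cap C=\{q\}$. Hence $[x,p]\cap[p,q]=\{p\}$ and $[p,q]\cap[q,y]=\{q\}$.

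For disjointness, suppose $z\in[x,p]\cap[q,y]$. Then $z\neq p$: if $z=p$, then $p\in[q,y]\cap C=\{q\}$, forcing $p=q$. So $z\notin C$, and likewise $z\neq q$. The arc $[z,p]\cup[p,q]\cup[q,z]$ is then a concatenation of arcs. It is a nondegenerate loop, because $p\neq q$, and its three pieces meet only at endpoints: $[z,p]\subset[x,p]$ meets $C$ only at $p$, and $[q,z]\subset[q,y]$ meets $C$ only at $q$. The two pieces $[z,p]$ and $[q,z]$ can meet only at $z$ and nearby points, but then we shrink $z$ to the last common point to get a genuine simple loop. A real tree contains no simple loop, so this is a contradiction.

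Finally, the union of three arcs $[x,p]\cup[p,q]\cup[q,y]$ meeting only at consecutive endpoints is an arc from $x$ to $y$. By uniqueness of arcs in a tree it equals $[x,y]$, which gives \eqref{eq:gate-decomposition}.

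The main subtlety is the disjointness step: shrinking $z$ to the last common point of $[z,p]$ and $[q,z]$ must produce a simple loop. I would handle it by choosing $z$ as the first point of $[x,p]$ that meets $[q,y]$.
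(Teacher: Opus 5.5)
Your proof is correct and follows the paper's route for existence (the branch-point identity makes a minimising sequence Cauchy), uniqueness, the gate property $p\in[x,c]$, and the final gluing of three arcs into $[x,y]$.

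The only divergence is the disjointness $[x,p]\cap[q,y]=\varnothing$. The paper argues directly from projections. From $p\in[x,c]$ one gets $d(z,c)=d(z,p)+d(p,c)$ for every $z\in[x,p]$ and $c\in C$, so every point of $[x,p]$ projects to $p$. Likewise every point of $[q,y]$ projects to $q$, so a common point would force $p=q$ by uniqueness. This is shorter than your simple-loop argument and avoids any choice of $z$.

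If you keep the loop argument, take $z$ to be the point of $[x,p]\cap[q,y]$ closest to $p$, equivalently the point where $[z,p]$ and $[z,q]$ diverge. Do not take the first such point met from $x$: for that choice, $[z,p]$ and $[z,q]$ can share a nondegenerate segment.
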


\begin{proof}
Choose $c_n\in C$ with $d(x,c_n)\downarrow d(x,C)$. If $b_{nm}$ is the branch point of $x,c_n,c_m$, then $b_{nm}\in C$ and
\[
d(c_n,c_m)=d(x,c_n)+d(x,c_m)-2d(x,b_{nm}),
\]
so $(c_n)$ is Cauchy. Completeness and closedness give a minimiser; two distinct minimisers would have a branch point in $C$ strictly closer to $x$, proving uniqueness.

If $p$ is the projection of $x$ and $c\in C$, the branch point of $x,p,c$ lies in $C$, so minimality forces it to be $p$; hence $p\in[x,c]$. For projections $p\neq q$ of $x$ and $y$, this gives $p\in[x,q]$ and $q\in[y,p]$. The two off-$C$ segments cannot meet away from their endpoints, since a common point would have both $p$ and $q$ as projections onto $C$. Their union with $[p,q]\subset C$ is therefore an arc from $x$ to $y$, and hence the unique geodesic.
\end{proof}

\begin{lemma}[Finite-order isometry]\label{lem:finite-order-isometry}
Let $F$ be an isometry of a complete metric tree $(\mathscr{T},d)$ and suppose $F^m=\id$ for some $m\geq1$. For every $o\in\mathscr{T}$, $F$ has a fixed point on $[o,F(o)]$.
\end{lemma}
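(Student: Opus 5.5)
The plan is to use the orbit midpoint (circumcentre) construction for bounded sets in complete real trees, and then transport the resulting fixed point onto the segment $[o,F(o)]$ using the fact that fixed-point sets of isometries of trees are convex.

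Step one. Consider the finite orbit $O=\set{o,F(o),\ldots,F^{m-1}(o)}$; since $F^m=\id$ it is $F$-invariant. Its convex hull $H$ is the union of the segments $[o,F^k(o)]$, $0\leq k\leq m-1$. This is a finite subtree, compact, and $F$ maps it onto itself because $F$ permutes $O$ and isometries send geodesic segments to geodesic segments. On $H$ I will use the circumcentre: the function $x\mapsto\max_{y\in O}d(x,y)$ is continuous on the compact set $H$, so it attains its minimum. In a real tree this minimiser $c$ is unique, namely the midpoint of a diametral pair of $O$. For uniqueness, note that two distinct minimisers $c,c'$ would have their midpoint strictly closer to every point of $O$ that realises the maximum. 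This uses the tree inequality $d(y,\mathrm{mid}(c,c'))<\max(d(y,c),d(y,c'))$ for $c\neq c'$, which follows from the branch-point picture. Since $F$ permutes $O$, it preserves the radius function, so $F(c)$ is also a minimiser and hence $F(c)=c$. This gives a fixed point, though not yet one on $[o,F(o)]$.

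Step two. The fixed set $\Fix(F)$ is convex, since $F$ maps the unique geodesic $[c,c']$ between two fixed points to itself with its endpoints fixed, hence pointwise. It is also closed and nonempty. Let $p$ be the gate (metric projection) of $o$ onto $\Fix(F)$ given by \cref{lem:projection-gates}. Applying $F$, which preserves $\Fix(F)$ setwise and pointwise, shows that $F(p)=p$ is also the projection of $F(o)$. By \cref{lem:projection-gates}, $p\in[o,c]$ for every fixed $c$. I then want to conclude that $p\in[o,F(o)]$. Suppose it is not. The segment $[o,F(o)]$ is the image under $F$ of nothing in particular, but the two segments $[o,p]$ and $[F(o),p]=F([o,p])$ share the endpoint $p$. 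If $p\notin[o,F(o)]$, these segments overlap on a nondegenerate initial piece $[q,p]$ ending at $p$, where $q$ is the branch point of $o,F(o),p$ and $q\neq p$. Because $F$ is an isometry fixing $p$ and carrying $[o,p]$ onto $[F(o),p]$, it maps the point at distance $d(q,p)$ from $p$ on $[o,p]$ to the point at the same distance on $[F(o),p]$. Both of these are $q$, so $q$ is fixed. But $q$ lies strictly closer to $o$ than $p$ does, which contradicts the minimality of $p$. Hence $p\in[o,F(o)]$.

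The main obstacle is the uniqueness of the circumcentre in step one, together with checking that $H$ is compact and $F$-invariant. Alternatively, one can avoid compactness by taking the midpoint of a diametral pair of $O$ directly, and argue that every diametral pair has the same midpoint via the four-point condition. I would first try this direct midpoint argument, since it avoids minimising over $H$ altogether.
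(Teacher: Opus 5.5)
Your proof is correct and follows the paper's argument essentially step for step. A fixed point comes from the unique minimiser of the orbit-radius function on the compact convex hull of the finite orbit (uniqueness via the tree midpoint inequality), and the gate $p$ of $o$ onto the closed convex set $\Fix(F)$ then lies on $[o,F(o)]$, since any common piece of $[p,o]$ and $[p,F(o)]=F([p,o])$ beyond $p$ would consist of fixed points closer to $o$. The diametral-pair midpoint variant you mention at the end is also valid, but it is not needed.
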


\begin{proof}
The convex hull of the finite orbit $\{o,F(o),\ldots,F^{m-1}(o)\}$ is a compact finite tree. On it, the maximum distance to the orbit has a unique minimiser. Indeed, if distinct points $x,x'$ both had minimal radius $R_0$ and $c$ were the midpoint of $[x,x']$, then for every orbit point $y$ the midpoint identity in a metric tree is
\[
d(c,y)=\max\{d(x,y),d(x',y)\}-\frac12d(x,x').
\]
Consequently,
\[
d(c,y)\leq R_0-\frac12d(x,x')<R_0,
\]
which contradicts minimality after taking the maximum over the orbit. Since $F$ preserves the orbit, it fixes this minimiser. Thus the fixed-point set
\[
\Fix(F):=\set{x\in\mathscr{T}:F(x)=x}
\]
is nonempty; it is also closed and convex. Let $p$ be the projection of $o$ onto $\Fix(F)$ from \cref{lem:projection-gates}. If $[p,o]$ and $[p,F(o)]$ met beyond $p$, uniqueness of points at a fixed distance from $p$ would make that common point fixed, contradicting minimality of $p$. Thus their union is $[o,F(o)]$, and $p$ is the required fixed point.
\end{proof}

\begin{proposition}[Axis formula for a metric-tree translation]\label{prop:metric-tree-axis}
Let $(\mathscr{T},d)$ be a complete metric tree and let $L:\mathscr{T}\to\mathscr{T}$ be a surjective isometry. Suppose that a closed geodesic line $\mathcal{A}\subset\mathscr{T}$ has an isometric parametrisation $\alpha_{\mathcal A}:\R\to\mathcal{A}$ satisfying
\[
L(\alpha_{\mathcal A}(t))=\alpha_{\mathcal A}(t+\tau)\qquad(t\in\R)
\]
for some $\tau>0$. For $x\in\mathscr{T}$, let $q$ be the projection of $x$ onto $\mathcal{A}$. Then $Lq$ is the projection of $Lx$ and
\begin{equation}\label{eq:abstract-axis-decomposition}
[x,Lx]=[x,q]\cup[q,Lq]\cup[Lq,Lx],
\end{equation}
with intersections only at consecutive endpoints. Moreover,
\begin{equation}\label{eq:abstract-axis-formula}
d(x,Lx)=2d(x,\mathcal{A})+\tau,
\end{equation}
and
\begin{equation}\label{eq:abstract-min-displacement-set}
\mathcal{A}=\{x\in\mathscr{T}:d(x,Lx)=\tau\}.
\end{equation}
Thus $\mathcal{A}$ is the unique minimum-displacement axis of $L$.
\end{proposition}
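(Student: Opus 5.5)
We will prove \cref{prop:metric-tree-axis} in three steps. First we show that $L$ commutes with projection onto $\mathcal{A}$. Then we build the geodesic decomposition from \cref{lem:projection-gates}. Finally we read off the displacement formula and the characterisation of the axis.

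\textbf{Projection commutes with $L$.} Since $L$ is a surjective isometry, it maps geodesic segments to geodesic segments. The relation $L(\alpha_{\mathcal A}(t))=\alpha_{\mathcal A}(t+\tau)$ shows that $L(\mathcal{A})=\mathcal{A}$. Let $q$ be the projection of $x$ onto the closed convex set $\mathcal{A}$; it exists and is unique by \cref{lem:projection-gates}. For every $c\in\mathcal{A}$ we have $c=L(c')$ with $c'=L^{-1}c\in\mathcal{A}$. Hence
\[
d(Lx,c)=d(x,c')\geq d(x,q)=d(Lx,Lq).
\]
So $Lq$ minimises the distance from $Lx$ to $\mathcal{A}$, and uniqueness identifies it as the projection of $Lx$. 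In particular $d(Lx,\mathcal{A})=d(x,\mathcal{A})$.

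\textbf{Decomposition and displacement formula.} Since $q=\alpha_{\mathcal A}(s)$ for some $s$, we have $Lq=\alpha_{\mathcal A}(s+\tau)$. Because $\tau>0$ and $\alpha_{\mathcal A}$ is isometric, $q\neq Lq$ and $d(q,Lq)=\tau$. Now apply the gate decomposition \cref{eq:gate-decomposition} of \cref{lem:projection-gates} with $y=Lx$ and projections $p=q$, $q'=Lq$. This gives \cref{eq:abstract-axis-decomposition}, with the pieces meeting only at consecutive endpoints. Since this union is the geodesic $[x,Lx]$, its length is the sum of the piece lengths:
\[
d(x,Lx)=d(x,q)+\tau+d(Lq,Lx)=2d(x,\mathcal{A})+\tau.
\]
This is \cref{eq:abstract-axis-formula}.

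\textbf{The axis as the minimum-displacement set.} By \cref{eq:abstract-axis-formula}, $d(x,Lx)=\tau$ holds if and only if $d(x,\mathcal{A})=0$. Since $\mathcal{A}$ is closed, this happens exactly when $x\in\mathcal{A}$, which proves \cref{eq:abstract-min-displacement-set}. The same formula shows that $\tau$ is the minimum displacement of $L$, so $\mathcal{A}$ is its unique minimum-displacement set.

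The only delicate point is justifying that the three-piece union is exactly the geodesic, so that lengths add without overlap. This is supplied by the gate lemma once $q\neq Lq$ is established, and that in turn is where $\tau>0$ is used. The convexity of $\mathcal{A}$ needed for the gate lemma is immediate, because a geodesic line in a tree contains the unique arc between any two of its points.
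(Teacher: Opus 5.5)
Your proposal is correct and follows essentially the same route as the paper. Invariance $L\mathcal{A}=\mathcal{A}$ makes $Lq$ the projection of $Lx$, and \cref{lem:projection-gates} applied to $x$ and $Lx$ (with $q\neq Lq$ because $\tau>0$) gives the three-piece geodesic, whose lengths you sum; closedness of $\mathcal{A}$ then yields the minimum-displacement characterisation. You spell out some details the paper leaves implicit: the explicit minimiser comparison for the projection, where $\tau>0$ is used, and the convexity of the line.
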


\begin{proof}
Because $L\mathcal{A}=\mathcal{A}$, the point $Lq$ is the projection of $Lx$. Also $d(q,Lq)=\tau$. Applying \cref{lem:projection-gates} to $x$ and $Lx$ gives \cref{eq:abstract-axis-decomposition}; the first and last pieces have the common length $d(x,\mathcal{A})$. Summing their lengths proves \cref{eq:abstract-axis-formula}, and equality with $\tau$ holds exactly on the closed line $\mathcal{A}$, proving \cref{eq:abstract-min-displacement-set}.
\end{proof}




\section*{Declaration of generative AI and AI-assisted technologies in the manuscript preparation process}
During the development and preparation of this work, the author used OpenAI's ChatGPT 5.6 Sol as a conversational aid for exploring possible proof routes, critically testing intermediate arguments, organising the exposition, and assisting with drafting and language editing. The author independently reconstructed and checked the mathematical arguments, reviewed and revised all AI-assisted material, verified the cited sources, and takes full responsibility for the content of the publication.

\end{document}